\title[Propagation around a Lagrangian submanifold of radial points]{Propagation of singularities around a Lagrangian submanifold of radial points}
\author{Nick Haber and Andr\'{a}s Vasy}
\date{\today}
\thanks{The authors were partially supported by the Department of Defense (DoD) through the National Defense Science \& Engineering Graduate Fellowship (NDSEG) Program (NH), a National Science Foundation Graduate Research Fellowship under Grant No. DGE-0645962 (NH), the NSF under Grant No. DMS-0801226 (AV) and from a Chambers Fellowship at Stanford University (AV)}
\subjclass[2010]{35A21, 35P25}
\numberwithin{equation}{section}
\newtheorem{thm}{Theorem}[section]
\newtheorem{lem}[thm]{Lemma}
\newtheorem{cor}[thm]{Corollary}
\theoremstyle{remark}
\newtheorem*{rmk}{Remark}
\theoremstyle{definition}
\theoremstyle{definition}
\newtheorem{define}[thm]{Definition}
\newcommand{\RR}{\mathbb R}
\def\WF{\mathrm{WF}}
\def\Id{\operatorname{Id}}
\def\IL{\mathcal I_{\Lambda, U}}
\newcommand{\lrangle}[1]{\langle #1 \rangle}
\def\supp{\mathrm{supp}}
\def\sgn{\mathrm{sgn}}
\def\Ell{\mathrm{Ell}}
\newcommand{\hS}{\hat \Sigma}
\newcommand{\MM}{\mathcal M}
\newcommand{\M}{\MM_\Lambda}
\begin{document}

\begin{abstract}
In this work we study the wavefront set of a solution $u$ to $Pu = f$, where $P$ is a pseudodifferential operator on a manifold with real-valued
homogeneous principal symbol $p$, when the Hamilton vector field
corresponding to $p$ is radial on a Lagrangian submanifold $\Lambda$ contained in the characteristic set of $P$. The standard propagation of singularities theorem of Duistermaat-H\"ormander gives no information at $\Lambda$. By adapting the standard positive-commutator estimate proof
of this theorem, we are able to conclude additional regularity at a point $q$ in this radial set, assuming some regularity around this point. That is, the a priori assumption is either a weaker regularity assumption at $q$, or a regularity assumption near but not at $q$. Earlier results
of Melrose and Vasy give a more global version of such analysis. Given some regularity assumptions around the Lagrangian submanifold, they obtain some regularity at the Lagrangian submanifold. This paper
microlocalizes these results, assuming and concluding regularity only at a
particular point of interest. We then proceed to prove an analogous result,
useful in scattering theory, followed by analogous results in the context of Lagrangian regularity.
\end{abstract}

\nocite{*}

\maketitle

\section{Introduction} \label{sect:intro}
This paper studies the wavefront set of a solution $u$ to $Pu = f$, where $P$ is a pseudodifferential operator on a manifold with real-valued homogeneous principal symbol $p$, when the Hamilton vector field corresponding to $p$ is radial on a Lagrangian submanifold contained in the characteristic set of $P$. According to a theorem of Duistermaat-H\"{o}rmander (\cite{duistermaathormander}), singularities propagate along bicharacteristics of this Hamilton vector field. This theorem gives us no information about the wavefront set when the Hamilton vector field is radial. \cite{melroseasymp} and \cite{kerr-desitter} give a global analysis of the propagation of singularities around a Lagrangian submanifold of radial points. By adapting the standard positive commutator estimate proof of this theorem, we microlocalize these results. 

After proving such a result, we proceed to prove an analogous result, useful in scattering theory, in particular in resolvent estimates. Analogous to the standard propagation of singularities, microlocal Sobolev bounds on $u_\tau$ which are uniform in $\tau \in [0, 1]$ or $(0, 1]$ propagate forward along bicharacteristics, assuming uniform Sobolev bounds for $(P - i \tau) u$, where now $P$ is of order $0$ (see, for instance, \cite{melroseasymp}). We prove a corresponding statement around a Lagrangian submanifold of radial points, generalizing to solutions of $P - i Q_\tau$, with $P, Q_\tau$ of equal order (not necessarily $0$), with suitable boundedness and positivity assumptions on $Q_\tau$. This is again a microlocal result which generalizes a global result given in \cite{melroseasymp}.

Lastly, we prove analogs in the context of Lagrangian regularity, essentially replacing ``$u$ is microlocally $H^s(X)$'' with ``$u$ is microlocally a Lagrangian distribution.'' This follows the analyses of \cite{hmv1} and \cite{hmv2}.

It should be emphasized that these results are completely local. That is, in order to conclude regularity for $u$ at a point $q$ in this Lagrangian submanifold, we need only have regularity for $f$ in an arbitrarily small neighborhood of $q$. At times we also need regularity assumptions on $u$ around the bicharacteristics approaching the smallest conic subset containing the $\RR_+$-orbit containing $q$, and at other times we also need a priori regularity assumptions on $u$ - it is important to note that these requirements are again local around $q$. Thus we do not, for instance, require regularity assumptions around the whole Lagrangian submanifold.

Under the nondegeneracy assumption $dp \neq 0$, the largest-dimensional subspace on which a Hamilton vector field can be radial is a Lagrangian submanifold. This occurs naturally in many applications, including geometric scattering theory. Indeed, these results generalize a series of results in \cite{melroseasymp}. For the treatment of the opposite extreme, that is, that of an isolated radial point, see for instance  \cite{guilleminschaeffer}, \cite{hmv1}, and  \cite{hmv2}.

In Section~\ref{sect:setup}, we introduce basic microlocal terminology. We then state the standard (principal-type) propagation of singularities theorems and discuss radial points in Section~\ref{sect:principalpropagation}. In Section~\ref{sect:cosphere}, we discuss the cosphere bundle as a quotient of the cotangent bundle (excluding the zero section). As it is at times easier to discuss dynamics on the cosphere bundle than it is on the cotangent bundle, we regard certain conic sets, such as wavefront sets, to be subsets of the cosphere bundle. We then state the main theorems of the paper in Section~\ref{sect:theoremstatements}. In Section~\ref{sect:sketch}, we sketch the proofs of these theorems. The theorems contain 'threshold' values ($s_0, s_1$) that have explicit values which are complicated to state in generality but can be refined considerably under additional assumptions. We thus delay discussing these values until Section~\ref{sect:s0s1formulas}. We then proceed to prove Theorem~\ref{thm:general} in Sections~\ref{sect:classicaldynamics}, \ref{sect:commutator}, and \ref{sect:operatorconstruction}. Theorem~\ref{thm:classical} follows as a special case. In section~\ref{sect:classicaldynamics}, we analyze the Hamiltonian dynamics around the radial points. In Section~\ref{sect:commutator}, we give the positive commutator proof of Theorem~\ref{thm:general}, assuming the existence of certain operators. In Section~\ref{sect:operatorconstruction}, we construct these operators. In Section~\ref{sect:tauproof}, we adapt these constructions for Theorem~\ref{thm:tau}. In Section~\ref{sect:iterativeregularity}, we review the notion from \cite{hmv1} of iterative regularity, in the context of Lagrangian regularity, state and prove Theorems~\ref{thm:iterative} and \ref{thm:tauiterative}.

In proving these theorems we make arguments which are intended to be adaptable to other situations. In particular, it may be possible to find a more explicit normal form for the Hamilton vector field around a Lagrangian submanifold of radial points, with Lemmas~\ref{lem:coordinates} and \ref{lem:geometriclemma} as easy consequences. These lemmas are, however, closer to the bare minimum needed to prove the main theorems, and thus indicate how the proofs might be adapted in cases where such a normal form cannot be found. As remarked after Lemmas~\ref{lem:s0operators} and \ref{lem:s1operators}, we can assure that certain error terms ($F_t$) are smoothing, which is stronger than the lemma statements. This is, however, not needed for the proof of Theorem~\ref{thm:general}, and requires a bit more work. An analogous error term improvement is needed in the proof of Theorem~\ref{thm:tau}, and we prove this in Section~\ref{sect:tausource}.

\subsection{Basic Setup} \label{sect:setup}

We recall several definitions so as to fix notation. Analysis will take place on $X$, an $n$-dimensional manifold without boundary. Given $P \in \Psi^m(X)$, the $m$th order pseudodifferential operators on $X$, we let $$\sigma_m(P) \in S^m(T^*X)/S^{m-1}(T^*X)$$ denote the principal symbol of $P$, where $S^m(T^*X)$ is the set of $m$-th order Kohn-Nirenberg symbols on $T^*X$.

Let $o$ be the 0-section of $T^*X$. Denote by $\mu: T^*X\backslash o \times \RR_+ \rightarrow T^*X \backslash o$ the natural dilation of the fibers of $T^*X\backslash o$: given $v \in T^*_xX, v \neq 0$, $\mu((x, v), t) = (x, tv)$. We call a subset of $T^*X \backslash o$ conic if $\mu$ acts on it. We call a function $f$ on $T^*X \backslash o$ homogeneous of order $m$ if $$(\mu( \cdot, t)^* f)(x, v) = t^m f(x, v)$$ and a vector field $V$ on $T^*X \backslash o$ homogeneous of order $m$ if $$(\mu^{-1}(\cdot, t)_*) V(x, v) = t^mV(x, v).$$ At times we will assume that $P \in \Psi^m(X)$ has a homogeneous (of order $m$) principal symbol $p$ (i.e. a homogeneous representative for $\sigma_m(P)$ - note that, if this exists, it is unique), defined on $T^*X \backslash o$. Given such a $p$, real-valued, we let $H_p$ be the associated Hamilton vector field on $T^*X \backslash o$. Note that then $H_p$ is homogeneous of order $m - 1$.

Given $P \in \Psi^m(X)$, let $\Sigma(P) \subset T^*X \backslash o$ denote the characteristic set of $P$, and let $\mathrm{Ell}(P) \subset T^*X \backslash o$ be the complement. Note that if we assume that $P$ has a homogeneous principal symbol $p$, $\Sigma(P) = p^{-1}(0)$. Given $u \in \mathcal D'(X)$, we let $$\WF^s(u) = \bigcap_{A \in \Psi^s(u), Au \in L^2_{\mathrm{loc}}(X)} \Sigma(A)$$ be the Sobolev wavefront sets of $u$. That is, $q \notin \WF^s(u)$ if there exists an $A \in \Psi^s(X)$, elliptic at $q$, with $Au \in L^2_{loc}(X)$. 

Given $A \in \Psi^m(X)$, let $\WF'(A)$ be the microsupport of $A$, that is $q \notin \WF'(A)$ if there exists a $B \in \Psi^0(X)$ with $q \in \Ell(B)$ and $BA \in \Psi^{-\infty}$. If $A_t \in L^\infty([0, 1]_t, \Psi^m)$, then $$q \notin \WF'(A)$$ if there exists such a $B$ with $BA_t \in L^\infty([0, 1], \Psi^{-\infty}(X))$. Similarly, given $a \in S^m(T^*X)$, we let the essential support of $a$ be denoted by $\mathrm{esssup}(a) \subseteq T^*X \backslash o$, that is, 
$$q \notin \mathrm{esssup}(a)$$
if there is a conic open neighborhood of $q$ on which $a$ satisfies order $-\infty$ bounds. Given $a_t \in L^\infty([0, 1]_t, S^m(T^*X))$, let 
$$q \notin \mathrm{esssup}_{L^\infty([0,1])}(a_t)$$
 if there is a conic open neighborhood of $q$ on which $a_t$ satisfies order $-\infty$ bounds independent of $t$. 

If $u = (u_\tau)_{\tau \in [0, 1]} \in L^\infty([0, 1], \mathcal D'(X))$, then say that $$q \notin \WF^s_{L^\infty([0, 1])}(u)$$ if there exists an $A \in \Psi^s(X)$ with $q \in \Ell(A)$ and $Au_\tau \in L^\infty([0, 1]_\tau, L^2_{\mathrm{loc}}(X))$; with the obvious modification if $u = (u_\tau)_{\tau \in (0, 1]}$. We can relax the requirement of a fixed $A$, making it $\tau$-dependent, as follows. Given $A_\tau \in L^\infty([0, 1]_\tau, \Psi^s(X))$ with choice of principal symbol $a_\tau \in L^\infty([0, 1], S^s(T^*X))$, then in local coordinates $(x, \xi)$, we say that $$(\hat{x}, \hat{\xi}) \in \Ell_{L^\infty([0, 1])}(A_\tau)$$ if, in a conic neighborhood $U \subset T^*X \backslash o$ of $\hat x, \hat \xi$, $|a_\tau(x, \xi)| \geq C\lrangle{\xi}^s$ for sufficiently large $\xi$, with $C$ and $U$ independent of $\tau$. We then have $q \notin \WF^s_{L^\infty([0, 1])}(u)$ if there is such an $A_\tau$ with $q \in \Ell_{L^\infty([0, 1])}(A_\tau)$.

Note that all the sets defined in the preceding paragraphs are conic subsets of $T^*X \backslash o$. Shortly, we shall regard them as subsets of the cosphere bundle - more on those in Section~\ref{sect:cosphere}.

\subsection{Standard Propagation of Singularities} \label{sect:principalpropagation}

We now recall a standard result (\cite{duistermaathormander}). As is customary, we refer to the integral curves of $H_p$ as bicharacteristics. We do not limit ourselves to bicharacteristics within $\Sigma(P)$ when using this term; we will specify inclusion in $\Sigma(P)$ in theorem statements.

\begin{thm}(Duistermaat-H\"{o}rmander, \cite[Theorem 6.1.1']{duistermaathormander}) \label{thm:hormander}
Suppose $P \in \Psi^m(X)$ with real-valued homogeneous principal symbol $p$. Then given $u \in \mathcal D'(X)$, $$\WF^s(u) \backslash \WF^{s - m + 1}(Pu)$$ is a union of maximally extended bicharacteristics in $\Sigma(P) \backslash \WF^{s - m + 1}(Pu)$. \qed
\end{thm}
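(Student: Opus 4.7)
The plan is the classical positive-commutator argument of H\"ormander. Since both $\WF^s(u)$ and $\WF^{s-m+1}(Pu)$ are closed conic subsets of $T^*X\setminus o$ and $H_p$ is homogeneous, the statement is equivalent to the following local propagation claim: if $q_0 \in \Sigma(P) \setminus \WF^{s-m+1}(Pu)$ and $\gamma : [0, t_0] \to \Sigma(P)$ is a piece of bicharacteristic with $\gamma(0) = q_0$, $\gamma(t_0) \notin \WF^s(u)$, and $\gamma([0,t_0]) \cap \WF^{s-m+1}(Pu) = \emptyset$, then $q_0 \notin \WF^s(u)$. An elementary open/closed argument in $t_0$ then yields the global statement. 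Replacing $P$ by $(P+P^*)/2$ we may assume $P = P^*$ modulo $\Psi^{m-1}(X)$, so that the ``skew-adjoint error'' is of subprincipal order.

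Next I would construct a commutant $A \in \Psi^{r}(X)$ with $r = s - (m-1)/2$, whose homogeneous principal symbol $a$ is supported in a narrow conic neighborhood of $\gamma([0,t_0])$ and has the form
\[
a(x,\xi) = \chi_1(\phi(x,\xi))\,\chi_2(\psi(x,\xi))\,\lrangle{\xi}^r,
\]
where $\phi$ is a function increasing along $H_p$ (a flow parameter) and $\psi$ measures transverse distance to $\gamma$ in its conic neighborhood; with appropriate choices of the cutoffs $\chi_1, \chi_2$ (negative derivatives on appropriate intervals) the symbol calculus gives
\[
\tfrac{1}{2}H_p(a^2) = -b^2 + e,
\]
where $b \in S^r$ is elliptic at $q_0$ and $e$ is essentially supported near $\gamma(t_0)$, where $u$ is already microlocally $H^s$. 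Such an $a$ is obtained either by a normal-form straightening of $H_p$ near $\gamma$ or directly using $\phi, \psi$ as coordinates along/transverse to the flow.

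I would then quantize with a regularizer: let $a_\epsilon = a \cdot (1+\epsilon\lrangle{\xi})^{-K}$ for $K$ large enough that $A_\epsilon u \in L^2_{\mathrm{loc}}$ a priori, quantize similarly to $B_\epsilon, E_\epsilon$, and pair
\[
\langle i[P, A_\epsilon^* A_\epsilon]u, u\rangle = 2\,\mathrm{Im}\,\langle A_\epsilon u, A_\epsilon Pu\rangle + \langle R_\epsilon u, u\rangle,
\]
where $R_\epsilon$ comes from the subprincipal/skew-adjoint error. The left-hand side, via the symbolic identity above, equals $-\|B_\epsilon u\|^2 + \langle E_\epsilon u, u\rangle$ plus remainders of order $2r-1$. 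The term $\|A_\epsilon Pu\|$ is uniformly bounded in $\epsilon$ by the hypothesis $q_0 \notin \WF^{s-m+1}(Pu)$; the $E_\epsilon$ term is uniformly bounded by the $H^s$ hypothesis on $\supp e$. The remaining $\|A_\epsilon u\|$ is absorbed using Cauchy-Schwarz with a small constant. Letting $\epsilon \downarrow 0$ (Fatou / weak-$*$ limit), one concludes $Bu \in L^2_{\mathrm{loc}}$, whence $q_0 \notin \WF^s(u)$.

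The main technical obstacle is the a priori regularity needed to make the pairing rigorous and to absorb the order $2r-1$ remainders: these require $u$ to be microlocally $H^{s-1/2}$ near $q_0$, which is not assumed. The standard device is an inductive bootstrap on $s$: since $u \in \mathcal{D}'(X)$ implies $u \in H^N_{\mathrm{loc}}$ for some $N$ on any relatively compact set, one runs the argument first for $s = N + 1/2$, then iterates, gaining half a derivative each time, until the desired $s$ is reached. Keeping the microlocal supports nested through this iteration, and maintaining uniform control of all errors as $\epsilon \downarrow 0$, is the delicate part of the proof.
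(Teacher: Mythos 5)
The paper does not prove this statement: it is quoted from Duistermaat--H\"ormander (Theorem 6.1.1$'$) with a citation and used as a black box, so there is no internal proof to compare against; your sketch is the standard positive-commutator proof of real-principal-type propagation, which is exactly the argument the paper itself adapts (with the commutant order $\frac{2s-m+1}{2}$, the order $2s-1$ remainder absorbed by the half-derivative bootstrap, and the regularizer limit), and it is correct in outline. Two minor points: the containment of $\WF^s(u)\setminus\WF^{s-m+1}(Pu)$ in $\Sigma(P)$ is part of the statement and requires microlocal elliptic regularity, which your local flow-invariance claim alone does not give; and since $p$ is real-valued, $P-P^*$ is automatically of order $m-1$, so the symmetrization step is unnecessary (and replacing $P$ by $(P+P^*)/2$ changes $Pu$ by a term whose regularity is not a priori known at the level $s-m+1$) --- the skew part is better left in place as an order $2s-1$ error handled by the same bootstrap.
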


We now recall an analogous result, useful in scattering theory. Statements similar to this can be found in many places (see, for instance, \cite{melroseasymp}). The semiclassical version of this is proved in \cite{datchevvasygluing} (Lemma 5.1), and the proof carries over without difficulty.

\begin{thm}(Datchev-Vasy, \cite[Lemma 5.1]{datchevvasygluing})\label{thm:melrosetau}
Given $P \in \Psi^m(X), Q = (Q_\tau)_{\tau \in [0, 1]} \in L^\infty([0, 1]_\tau, \Psi^m(X))$, and $u_\tau \in L^\infty([0, 1]_\tau, \mathcal D'(X))$ such that
\begin{itemize}
	\item $P$ has real-valued homogeneous principal symbol $p$,
	\item $Q_\tau$ has real-valued (choice of) principal symbol $q_\tau \geq 0$
	\item $P - i Q_\tau$ is elliptic for $\tau > 0$ (so in particular we can choose $q_\tau > 0$ for $\tau > 0$). 
\end{itemize}
then $$\WF^s_{L^\infty([0, 1])}(u_\tau) \backslash \WF^{s - m + 1}_{L^\infty([0, 1])}((P - i Q_\tau) u_\tau)$$ is a union of maximally backward-extended bicharacteristics in $$\Sigma(P) \backslash \WF^{s - m + 1}_{L^\infty([0, 1])}((P - i Q_\tau) u_\tau).$$ \qed
\end{thm}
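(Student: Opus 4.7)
The plan is to adapt H\"{o}rmander's positive commutator proof of Theorem~\ref{thm:hormander} uniformly in $\tau$, with the key new observation that the extra $-iQ_\tau$ term contributes with a sign that is precisely what selects the backward direction of propagation. Fix $q \in \Sigma(P) \setminus \WF^{s-m+1}_{L^\infty([0,1])}((P - iQ_\tau)u_\tau)$ and a point $q_0$ on the backward bicharacteristic through $q$ with $q_0 \notin \WF^{s}_{L^\infty([0,1])}(u_\tau)$; the goal is to conclude $q \notin \WF^s_{L^\infty([0,1])}(u_\tau)$, after which a standard connectedness argument along the backward bicharacteristic gives the maximally-extended statement.

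I would construct a nonnegative symbol $a \in S^{s-(m-1)/2}$, elliptic at $q$ and essentially supported in a small conic neighborhood of the backward bicharacteristic segment from $q$ to $q_0$, by the standard recipe (a cutoff in $p$, a monotone cutoff $\psi$ in a bicharacteristic parameter $\rho$ with $H_p \rho = 1$ and $\psi' \leq 0$, and a transverse cutoff), so that
\[
H_p(a^2) = -b^2 + e
\]
with $b \in S^s$ elliptic at $q$ and $e \in S^{2s}$ essentially supported near $q_0$. Setting $A = \mathrm{Op}(a)$ and $f_\tau = (P - iQ_\tau)u_\tau$, I would pair $f_\tau$ against $A^*A u_\tau$ and take imaginary parts; using that $P$ and $Q_\tau$ have real principal symbols, this yields
\[
2\,\mathrm{Im}\langle f_\tau, A^*A u_\tau\rangle = \langle i[P, A^*A] u_\tau, u_\tau\rangle - \langle \{A^*A, Q_\tau\} u_\tau, u_\tau\rangle + \langle R_\tau u_\tau, u_\tau\rangle,
\]
with $\{A^*A, Q_\tau\} = A^*A Q_\tau + Q_\tau A^*A$ and $R_\tau \in L^\infty([0,1], \Psi^{2s-1})$. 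The commutator contributes $-\|B u_\tau\|^2 + \langle E u_\tau, u_\tau\rangle$ modulo lower order, while the anti-commutator has principal symbol $2 a^2 q_\tau \geq 0$ and, by the sharp G{\aa}rding inequality applied uniformly in $\tau$, is bounded below by $-C\|u_\tau\|_{H^{s-1/2}}^2$ with $\tau$-independent $C$. Crucially, $-\langle \{A^*A, Q_\tau\} u_\tau, u_\tau\rangle$ lands on the \emph{same} side of the estimate as $-\|Bu_\tau\|^2$, so the $Q_\tau$ term is absorbed for free; had I instead attempted to propagate forward (reversing the sign of $\psi'$), this term would appear with the wrong sign and could not be controlled without an a priori hypothesis on $Q_\tau u_\tau$.

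After the usual H\"{o}rmander regularization $A \rightsquigarrow A \Lambda_\epsilon$, with $\Lambda_\epsilon$ bounded in $\Psi^0$ and converging to $\Id$ in $\Psi^\delta$ for every $\delta > 0$, rearranging yields a $\tau$-uniform estimate
\[
\|B u_\tau\|^2 \leq C\!\left(\|f_\tau\|_{H^{s-m+1}}^2 + \|E' u_\tau\|^2 + \|u_\tau\|_{H^{s-1/2}}^2\right),
\]
with $B$ elliptic at $q$ and $E'$ essentially supported near $q_0$; the first two terms are controlled by the hypotheses at $q$ and $q_0$, and the last is eliminated by iterating the argument in half-order increments starting from a background assumption $u_\tau \in L^\infty([0,1], H^N_{\mathrm{loc}})$. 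The main obstacle is really bookkeeping: verifying that each step — the commutator identity, the sharp G{\aa}rding inequality, and the regularization — yields constants independent of $\tau$, which ultimately follows from $Q_\tau \in L^\infty([0,1], \Psi^m)$ together with the $\tau$-independence of the symbolic construction of $a$. The structural input from the $Q_\tau$ hypothesis is simply the sign of $q_\tau$; beyond this the proof is formally identical to the principal-type propagation theorem.
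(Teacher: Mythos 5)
The paper does not actually prove this statement: it is quoted from Datchev--Vasy \cite[Lemma 5.1]{datchevvasygluing}, with the remark that the semiclassical proof carries over. Your overall strategy --- a $\tau$-uniform positive commutator argument in which the anticommutator with $Q_\tau$ has the sign of $-\|Bu_\tau\|^2$ precisely when one propagates regularity forward (i.e.\ proves $\WF$ is a union of backward bicharacteristics) --- is the intended one, and it is the same mechanism this paper exploits for its radial-point analogue, Theorem~\ref{thm:tau}. So the architecture is right; the problem is in the order bookkeeping at exactly the step where this theorem differs from Theorem~\ref{thm:hormander}.

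With $a \in S^{s-(m-1)/2}$ we have $A^*A \in \Psi^{2s-m+1}$, so $A^*AQ_\tau + Q_\tau A^*A \in \Psi^{2s+1}$: the sharp G\r{a}rding inequality applied to this operator (nonnegative principal symbol $2a^2q_\tau$) loses one order and yields a lower bound $-C\|u_\tau\|^2_{H^{s}}$, not $-C\|u_\tau\|^2_{H^{s-1/2}}$ as you claim. An error of size $\|u_\tau\|_{H^s}^2$ is exactly the quantity you are trying to control, so as written the term is not ``absorbed for free'' and the half-order induction does not close. The standard repairs are: (i) symmetrize, $A^*AQ_\tau + Q_\tau A^*A = 2A^*Q_\tau A + R'_\tau$ with $R'_\tau$ uniformly in $\Psi^{2s-1}$ (the first-order Poisson bracket terms cancel), and then either use operator positivity of $Q_\tau$ to discard $\lrangle{Q_\tau Au_\tau, Au_\tau} \geq 0$ --- which is what Theorem~\ref{thm:tau} of this paper assumes --- or apply sharp G\r{a}rding to $Q_\tau$ itself, producing a \emph{microlocalized} order-$2s$ error $C\|Au_\tau\|^2_{H^{(m-1)/2}}$ that must then be dominated by strengthening the commutator, e.g.\ inserting an exponential weight $e^{-\Lambda\rho}$ with $\Lambda$ large so that $H_p(a^2)$ contains a term $-2\Lambda a^2\zeta^{m-1}$ beating the G\r{a}rding error; or (ii) invoke Fefferman--Phong (which gains two orders) rather than sharp G\r{a}rding, but then you must still arrange the error to be microsupported near $\supp a$, since $\|u_\tau\|_{H^{s-1/2}}$ as a global norm is meaningless for a general distribution. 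Note that the same large-parameter/weight absorption is needed anyway: your remainder ``$R_\tau \in L^\infty([0,1],\Psi^{2s-1})$'' is too optimistic, because $\frac{1}{2i}(P - P^*)A^*A$ (and the skew part of $Q_\tau$) contributes at order $2s$ with no sign, and is dominated only via the same weight. It is worth observing that this absorption is available here precisely because $H_p\rho = 1$ along the segment --- the very mechanism that fails on the radial set, which is why the paper's Theorem~\ref{thm:tau} replaces symbolic nonnegativity of $q_\tau$ by positive semidefiniteness of $Q_\tau$ and acquires threshold restrictions.
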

Note that, while regularity propagates both forward and backward along bicharacteristics in Theorem~\ref{thm:hormander}, regularity only propagates forward along bicharacteristics in Theorem~\ref{thm:melrosetau}.

\begin{define}
We call the vector field $f(\cdot) \mapsto \frac{d}{dt}|_{t = 0} f(\mu(\cdot, t))$ the radial vector field. We say that $H_p$ is {\em radial} at a point $q \in T^*X$ if $H_p$ is a scalar multiple of the radial vector field at $q$, and we then call $q$ a {\em radial point} of $H_p$.
\end{define}

Equivalently, if we choose local canonical coordinates $(x, \xi)$ for $T^*X$, then $H_p$ is radial at $q$ if it is a scalar multiple of $\xi \cdot \partial_\xi$ at $q$. Note then that Theorem~\ref{thm:hormander} and Theorem~\ref{thm:melrosetau} say nothing at radial points: if $q$ is a radial point, then $H_p$ is also radial along $q$'s orbit under $\mu$ (by the homogeneity of $H_p$). Thus the bicharacteristic going through $q$ is a conic set. As $\WF^s(u)\backslash \WF^{s - m + 1}$ is conic, the theorem says nothing here.

It turns out, however, that we can conclude more about the regularity of $u$ at the $\RR_+$-orbit of a radial point $q$, depending on the dynamics of $H_p$ around the orbit of $q$. We restrict our attention to the case where $H_p$ is radial on a Lagrangian submanifold $\Lambda$ of $T^*X \backslash o$.  Before stating the results, we make some further definitions (some slightly nonstandard) to avoid making statements in terms of the $\RR_+$ orbit of a point or bicharacteristics approaching such an orbit.

\subsection{The cosphere bundle picture} \label{sect:cosphere}

Let $$\kappa: T^*X \backslash o \rightarrow (T^*X \backslash o)/\RR_+ = S^*X$$ be the quotient map identifying the orbits of $\mu$. We identify $(T^*X \backslash o)/\RR_+$ with $S^*X$, the cosphere bundle of $X$. Given $q \in S^*X$, let $U$ be a conic neighborhood of $\kappa^{-1}(q)$ with  $\zeta: U \rightarrow \RR_+$, homogeneous of degree $1$ and nonvanishing. On $U$, we can then define the vector field $W_p = \zeta^{1 - m}H_p$. This is then homogeneous of degree $0$, so it pushes forward to a vector field on $\kappa(U) \subset S^*X$ (which we will at times also call $W_p$). Note then that $H_p$ is radial at $\kappa^{-1}(q)$ if and only if $\kappa_* W_p$ vanishes at $q$.

It is of course possible to have such a $\zeta$ globally defined on $T^*X \backslash o$ (we can, for instance, let $\zeta$ be the norm on the cotangent fibers induced by the choice of a Riemannian metric), and thus taking a globally well-defined $W_p$ and $q \in S^*X$, let $$\Gamma_q = \{x \in S^*X \backslash q \: | \lim_{t \rightarrow \infty} \exp(\pm t \kappa_* W_p)(x) = q\}.$$ Note that while $W_p$ depends on the choice of $\zeta$, the integral curves do not (different choices of $\zeta$ correspond to different parameterizations of these integral curves). In particular, if we define $\zeta$ only locally, then the integral curves of the locally-defined $W_p$ agree with the globally-defined ones.

If $U$ has a coordinate chart $\phi = \phi_0 \times \zeta: U \rightarrow V_0 \times \RR_+$ where $\phi_0$ is homogeneous of order $0$ (and $\zeta$ homogeneous of order $1$), then $\partial_\zeta$ is radial. If we set $U_0 = \kappa(U)$, then $\phi_0$ induces a coordinate chart $\psi_0: U_0 \rightarrow V_0$ determined by $\psi_0 \circ \kappa = \phi_0$.

Since $\WF^s(u), \WF'(A), \mathrm{Ell}(A)$, $\mathrm{esssup}(a)$  and their $L^\infty([0, 1])$-counterparts are conic subsets, it is natural to regard these as subsets of the cosphere bundle, and from here on we elect to do so: $$\WF^s(u), \WF'(A), \mathrm{Ell}(A), \mathrm{esssup}(a) \subset S^*X.$$  We set, for $P \in \Psi^m(X)$, $\hat \Sigma(P) = \kappa(\Sigma(P))$, and fixing a Lagrangian submanifold $\Lambda$ of $T^*X$, $L = \kappa(\Lambda)$.

Assuming $H_p$ is radial on a Lagrangian submanifold $\Lambda \subset T^*X$, $W_p$ vanishes on $L$. If we assume further that $dp \neq 0$ on $\Lambda$, then $L$ is either a sink or a source for $W_p|_{\hS}$. In fact, if we look at the linearization of $W_p|_{\hS}$ at a point $q \in L$, it has two eigenvalues: a nonzero $\lambda_0$ corresponding to the conormal bundle of $L$, and $0$. We will see this quite explicitly in Section~\ref{sect:coordinates}; for a more general discussion on why this must be true, see, for instance, \cite[Section 2]{hmv2}.

\subsubsection{The compactified cotangent bundle picture} \label{sect:compactified}

This section is optional and is included in order to give a nice picture of the classical ($H_p$) dynamics involved. In further sections we will work in the cotangent bundle and the cosphere bundle, and use this for supplementary commentary. We denote by $\overline{T}^*X$ the (fibre-) compactified cotangent bundle of $X$. See \cite{melroseasymp} for an introduction to this, and in particular a proof that it is globally well-defined; here we simply state the essential properties of it and give a local coordinate chart.

$\overline{T}^*X$ is a disk bundle over $X$, constructed by compactifying each fiber of $T^*X$ to a (closed) disk. There is an inclusion $j: T^*X \hookrightarrow \overline{T}^*X$, and the boundary $\partial \overline{T}^*X$ can be identified with the cosphere bundle $S^*X$. Given $q \in S^*X$, along with conic open neighborhood $U = \kappa^{-1}(U_0) \subset T^*X \backslash o$ of $\kappa^{-1}(q)$ and coordinate chart $\phi$ as above, we can give a coordinate chart $\varphi: \tilde{U} \rightarrow U_0 \times [0, 1]_x$ for an open neighborhood $\tilde{U} \subset \overline{T}^*X$ of $q$ as follows. Given $w \in U$, let $\varphi(j(w)) = (\phi_0(w), \frac{1}{\zeta}(w))$, and for $w \in S^*X$, let $\varphi(w) = (\phi_0(\kappa^{-1}(w)), 0)$. We have a boundary-defining function $x$ defined by $x = \frac{1}{\zeta}$ on the interior and $x = 0$ on the boundary.

Again taking the vector field $W_p = \zeta^{1 - m} H_p$ defined on $U$, $W_p$ extends uniquely (see \cite{melroseasymp}) to a vector field on $\tilde{U}$, which we will also denote by $W_p$. $W_p$ is tangent to the boundary $\partial \overline{T}^*X$, i.e. $W_p \in \mathcal V_b(\overline U)$ (the Lie algebra of vector fields tangent to the boundary). $W_p|_{S^*X}$ then agrees with $\kappa_* W_p$ as defined in Section~\ref{sect:cosphere}.

As noted at the end of Section~\ref{sect:cosphere}, $L$ is either a sink or a source for $W_p|_{\hS}$. As we will see in Section~\ref{sect:coordinates}, more is true: $L$ is in fact a sink or a source for $W_p|_{\overline \Sigma}$, where $\overline \Sigma = \Sigma \cup \hS \subset \overline T^*X$. The linearization of this has the same eigenvalue $\lambda_0$ corresponding to any boundary defining function. Our proofs of the following theorems depend on the behavior of $W_p$ near $L$ not just in the cosphere bundle but also in the interior of $\overline{T}^*X$, and that $L$ is a source or sink in this sense will be very important.

\subsection{Statement of Theorems} \label{sect:theoremstatements}

First, we state a simple version, valid, for instance, when $P \in \Psi^m_{\mathrm{cl}}(X)$. Here we choose a density on $X$ in order to define $P^*$; however $s_0$ in the statement below does not depend on this choice. See section~\ref{sect:s0s1formulas} for more on this independence. In particular, the homogeneous requirement on $\sigma_{m-1}(\frac{P - P^*}{2i})$ does not depend on the choice of density.

\begin{thm} \label{thm:classical}
Given $P \in \Psi^m(X)$ with a real-valued homogeneous principal symbol $p$ such that $H_p$ is radial (and nonvanishing) on a conic Lagrangian submanifold $\Lambda \subset \Sigma(P)$, with the additional assumption that $\sigma_{m-1}(\frac{P - P^*}{2i})$ has homogeneous representative, then given $q \in \kappa(\Lambda)$, there exist $s_0 \in \RR$ such that
\begin{itemize}
	\item For $s < s_0$,  if there is an open neighborhood $U_0 \subset S^*X$ of $q$ disjoint from $\WF^{s - m + 1}(Pu)$ and from $\Gamma_q \cap \WF^s(u)$,
then $q \notin \WF^s(u)$.
	\item For every  $s > s_1 > s_0$, $q \notin \WF^{s_1}(u)$ implies $q \notin \WF^s(u)\backslash \WF^{s - m + 1}(Pu)$.
\end{itemize}
\end{thm}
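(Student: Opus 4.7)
The proof proceeds by a positive commutator estimate, adapting the Duistermaat-H\"ormander argument underlying Theorem~\ref{thm:hormander} to the radial dynamics at $\Lambda$. The key identity, for a symmetric $A \in \Psi^{s - m/2 + 1/2}(X)$ pairing well against $u$, is
\[
\langle i[P, A^*A] u, u \rangle \;=\; 2\,\mathrm{Im}\langle A^*A Pu, u \rangle + \big\langle i(P - P^*) A^* A u, u\big\rangle,
\]
whose left-hand side has principal symbol $H_p(a^2)$, $a = \sigma(A)$. The strategy is to choose $a$ microlocalized near $q$ and elliptic there, so that $H_p(a^2)$ is a sum of a sign-definite principal part plus error terms microsupported in regions where we have the needed regularity information.

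First, I would pick local canonical coordinates near $\kappa^{-1}(q)$ (Lemma~\ref{lem:coordinates}) bringing $\Lambda$ into a standard symplectic normal form and diagonalizing the linearization of $W_p|_{\hS}$ at $q$, with transverse eigenvalue $\lambda_0 \neq 0$ whose sign is the source/sink sign of $L$ as discussed in Section~\ref{sect:compactified}. Let $\rho \geq 0$ be a homogeneous degree-$0$ defining function for $L$ in $\hS$ near $q$, $\chi$ a cutoff to a small neighborhood of $q$, and $\zeta$ a positive homogeneous degree-$1$ function; form the commutant symbol
\[
a \;=\; \phi(\rho)\,\chi(\mathrm{transverse\ variables})\,\zeta^{s - m/2 + 1/2},
\]
with $\phi$ a cutoff localizing near $\rho = 0$. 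Since $W_p \rho = \lambda_0 \rho + O(\rho^2)$ near $q$ and $W_p \zeta$ has a definite sign on $L$, a computation gives
\[
H_p(a^2) \;=\; \lambda_0\bigl(2s - m + 1 + c\bigr)\,\zeta^{m-1}\,a^2 \,+\,(\mathrm{cutoff\text{-}derivative\ error}),
\]
where $c$ is the contribution of $\sigma_{m-1}((P-P^*)/2i)$ at $q$; the threshold $s_0$ is precisely the value at which this coefficient vanishes.

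The third step is to regularize, replacing $a$ by $a_t \in S^{-\infty}$ for $t > 0$ converging to $a$ in $S^{s - m/2 + 1/2 + \delta}$ as $t\downarrow 0$, and apply the commutator identity to $u$ for each $t > 0$; all pairings are then a priori finite, and the task is to pass to the limit with a uniform-in-$t$ bound. For $s > s_1 > s_0$ the coefficient in the main term has the sign of $\lambda_0$; one absorbs the main term into a positive norm $\|B_t u\|^2$ via sharp G{\aa}rding, controls the a priori error using $q \notin \WF^{s_1}(u)$, controls the $Pu$ contribution using the $\WF^{s-m+1}(Pu)$ hypothesis, and controls the cutoff-derivative error using the same a priori assumption. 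For $s < s_0$ the sign is reversed: the main term controls $\|B_t u\|^2$, and the cutoff-derivative error is microsupported on bicharacteristics approaching $q$ (i.e.\ essentially within $\Gamma_q$) by the source/sink structure, so the hypothesis $\Gamma_q \cap \WF^s(u) \cap U_0 = \emptyset$ together with the $\WF^{s-m+1}(Pu)$ hypothesis suffices; here no a priori regularity at $q$ is needed since one works below threshold.

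The principal obstacles, to my mind, are threefold. First, constructing the regularized family $a_t$ in such a way that the dynamical sign of $H_p(a_t^2)$ is preserved \emph{uniformly} in $t$, and the associated error operators $F_t$ are bounded (ideally smoothing) on the relevant Sobolev spaces --- this is the content of Lemmas~\ref{lem:s0operators} and \ref{lem:s1operators}. Second, arranging that the cutoff-derivative error is microsupported within $\Gamma_q$ (not merely near $L$), which requires the geometric content of Lemma~\ref{lem:geometriclemma} about how bicharacteristics near but off $\Lambda$ escape $L$ only along $\Gamma_q$. Third, identifying the threshold $s_0$ explicitly and verifying its independence of the density used to define $P^*$, which is deferred to Section~\ref{sect:s0s1formulas}.
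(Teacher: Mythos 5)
Your overall strategy is the same as the paper's: a positive commutator estimate with a commutant of order $\frac{2s-m+1}{2}$ localized near $q$, the threshold coming from balancing $H_p$ applied to the weight against $\sigma_{m-1}(\frac{P-P^*}{2i})$, and the hypothesis on $\Gamma_q$ converted into regularity on a punctured neighborhood of $q$ via Lemma~\ref{lem:geometriclemma} together with Theorem~\ref{thm:hormander} (Corollary~\ref{cor:geometriccor}). However, two steps as you describe them would fail. First, your regularization scheme in the $s>s_1$ case: you take $a_t\in S^{-\infty}$ for $t>0$ in \emph{both} regimes. Below threshold this is fine, since the derivative hitting the regularizer only lowers the effective order of the weight, which preserves the favorable sign. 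Above threshold it does not work: where $t\zeta\sim 1$ the regularizer drives the effective order of $\rho_t$ below the threshold, so $\rho_tH_p\rho_t+\sigma_{m-1}(\frac{P-P^*}{2i})\rho_t^2$ is not sign-definite uniformly in $t$, and the offending piece is of size comparable to $t^{-2(s-s_1)}\zeta^{2s_1}$ on its support, so it cannot be absorbed using $q\notin\WF^{s_1}(u)$ either. This is precisely why the paper regularizes only down to order $s_1$ (taking $\rho_t=\zeta^{\frac{2s-m+1}{2}}(1+t\zeta)^{s_1-s}$), why the a priori hypothesis enters at all, and why a separate argument (Lemma~\ref{lem:integratebyparts}, a second regularization) is then required to justify the integration by parts, since for $t>0$ the commutant is no longer smoothing; your claim that ``all pairings are then a priori finite'' is only available in the sub-threshold case.

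Second, the cutoff-derivative error is not ``essentially within $\Gamma_q$.'' In the coordinates of Lemma~\ref{lem:coordinates} the flow on $\hS$ has the nonzero eigenvalue only in the $\alpha$ (conormal to $L$) directions and eigenvalue $0$ along $L$ (the $\beta$ directions), so the derivative of any cutoff in the $\beta$ directions is supported on a set meeting $L$ itself away from $q$, where neither the flow-out of $\Gamma_q$ (Corollary~\ref{cor:geometriccor} only gives regularity on $W'\setminus L$) nor elliptic regularity gives any information, in either regime. The paper handles this by building the cutoff as $\tilde\chi(|\beta|^2+C|\alpha|^2)$ with the sign of $C$ and the size of the support chosen so that $H_p$ of this cutoff has a definite sign matching the main term; that contribution is then absorbed as an exact square $G_{1,t}^*G_{1,t}$ rather than treated as an error (this exact-square construction, with smooth square roots arranged via the properties of $\tilde\chi$, also lets the paper avoid the $t$-uniform sharp G{\aa}rding estimate you invoke). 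Only the $\alpha$-direction cutoff derivative (below threshold) goes into $E_t$, microsupported off $L$ and controlled by Corollary~\ref{cor:geometriccor}; above threshold only the cutoff in $\eta_0$, off the characteristic set, contributes to $E_t$. Your sketch defers these points to Lemmas~\ref{lem:s0operators} and \ref{lem:s1operators}, but they are where the actual content lies, and the mechanism you propose for them (all errors controlled by the stated regularity hypotheses) is not the one that works.
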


Next, we state a more general version, taking away the assumption on $\sigma_{m-1}(\frac{P - P^*}{2i})$.

% In some cases (for instance, when $P^* = P$), this is a strengthening of the above theorem - see the discussion below on choosing $s_0$ and $s_1$. Regardless, this theorem implies Theorem~\ref{thm:classical}, so we will proceed to prove the following.

\begin{thm} \label{thm:general}
Given $P \in \Psi^m(X)$ with a real-valued homogeneous principal symbol $p$ such that $H_p$ is radial (and nonvanishing) on a conic Lagrangian submanifold $\Lambda \subset \Sigma(P)$, then given $q \in \kappa(\Lambda)$, there exist $s_0, s_1 \in \RR$ such that
\begin{itemize}
	\item For $s < s_0$, if there is an open neighborhood $U_0 \subset S^*X$ of $q$ disjoint from $\WF^{s - m + 1}(Pu)$ and from $\Gamma_q \cap \WF^s(u)$, then $q \notin \WF^s(u)$.
	\item If $s > s_1$,then $q \notin \WF^{s_1}(u)$ implies $q \notin \WF^s(u)\backslash \WF^{s - m + 1}(Pu)$.
\end{itemize}
\end{thm}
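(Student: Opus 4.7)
The plan is a positive commutator estimate adapting the Duistermaat--H\"ormander argument to the degenerate dynamics at a radial Lagrangian. As a first step I would analyze the classical dynamics near $L = \kappa(\Lambda)$: since $H_p$ is radial and nonvanishing on $\Lambda$, the linearization of $W_p = \zeta^{1-m}H_p$ at any $q \in L$ has one nonzero eigenvalue $\lambda_0$ in the conormal direction to $L$ inside $\hS$ and the remaining eigenvalues zero. Up to replacing $P$ by $-P$ I may assume $L$ is a sink for $W_p|_{\overline\Sigma}$. I would then produce a non-negative function $\rho$ on a conic neighborhood of $\kappa^{-1}(q)$, vanishing precisely on $\Lambda$, with $H_p\rho \leq -c\,\zeta^{m-1}\rho$ modulo an acceptable remainder, together with a homogeneous degree-zero localizer $\phi$ near $q$ whose level sets are transverse to the bicharacteristic flow leaving $L$.

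Next I would construct the commutant as $A = \mathrm{Op}(a)$ with
\[
a = \zeta^{s - (m-1)/2}\,\chi(\rho)\,\phi,
\]
where $\chi$ is a smooth cutoff to a small neighborhood of $\{\rho = 0\}$. The commutator $i[P, A^*A]$ has principal symbol $H_p(a^2) + 2a^2\sigma_{m-1}\bigl((P-P^*)/(2i)\bigr)$; the first summand contains a main term $2\chi\chi'(\rho)\,H_p\rho \cdot \zeta^{2s-(m-1)}\phi^2$ of definite sign by the classical dynamics, a subprincipal contribution scaling linearly in $s$ responsible for the threshold values $s_0, s_1$, and ``escape'' errors from $H_p\phi$ microsupported away from $q$ along bicharacteristics in $\Gamma_q$. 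For $s < s_0$ I pick the sign of $\chi'$ so that the main and subprincipal terms cooperate; the escape errors are absorbed by the hypothesis $\WF^s(u) \cap \Gamma_q \cap U_0 = \emptyset$. For $s > s_1$ the opposite sign of $\chi'$ is chosen, forcing the main term to dominate the subprincipal term, and the a priori $\WF^{s_1}$ hypothesis at $q$, together with standard principal-type propagation, absorbs the escape errors.

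Since $Au \in L^2_{\mathrm{loc}}$ is the conclusion rather than an assumption, I would regularize by a family $\Lambda_t \in \Psi^{-\delta}$ ($\delta > 0$) bounded uniformly in $\Psi^0$ with $\Lambda_t \to \Id$ as $t \to 0$, apply the identity $\langle i[P, A_t^*A_t]u, u\rangle$ with $A_t = \Lambda_t A$, and use Cauchy--Schwarz on cross terms together with a sharp G\aa rding inequality on the main positive piece to derive $t$-uniform $L^2$ bounds, then pass to the limit $t \to 0$.

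The main obstacle is the commutant construction and the precise identification of $s_0, s_1$: when $\sigma_{m-1}((P-P^*)/(2i))$ is not homogeneous, the non-homogeneous portion must be absorbed into error terms controlled by the main commutator piece, and this generically forces a strict gap $s_0 < s_1$ compared to the sharper homogeneous case of Theorem~\ref{thm:classical}. A secondary subtlety is that the low-regularity hypothesis lives on $\Gamma_q \setminus \{q\}$, not at $q$; the cutoff $\phi$ must be arranged so that $\supp H_p \phi$ lies inside the region where the $\Gamma_q$-regularity hypothesis applies, and this in turn requires the dynamical normal form (or its structural substitute via $\rho$) from the first step.
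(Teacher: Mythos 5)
Your overall strategy (adapted positive commutator, normal form for the dynamics near $L$, localizer with escape terms controlled by the $\Gamma_q$ hypothesis, thresholds from balancing an $s$-linear term against the subprincipal symbol) is the same as the paper's, but the mechanism producing positivity at $q$ is misidentified, and this is not a cosmetic point. Your designated ``main term'' $2\chi\chi'(\rho)\,H_p\rho\cdot\zeta^{2s-(m-1)}\phi^2$ vanishes identically on $\Lambda$: $H_p\rho$ vanishes where $\rho=0$, and since $H_p$ is radial at $\kappa^{-1}(q)$, $H_p$ applied to \emph{any} degree-zero localizer vanishes at $q$. Hence this term cannot be elliptic at $q$ and cannot yield $q\notin\WF^s(u)$. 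The elliptic-at-$q$ contribution must come from $H_p$ falling on the weight $\zeta^{2s-m+1}$ combined with $\sigma_{m-1}\bigl(\frac{P-P^*}{2i}\bigr)$ --- exactly the combination you relegate to a perturbation to be ``dominated.'' Its sign relative to $\lambda=-H_p\zeta$ flips across the threshold, and that is what dictates the two regimes; the cutoff-derivative terms are then either arranged to share that sign (the paper does this with $\eta_1=|\beta|^2+C|\alpha|^2$ and a sign-adapted choice of $C$, giving the harmless $G_{1,t}^2$ term) or pushed into an error $E_t$ microsupported off $\Lambda$ (for $s<s_0$) or off $\Sigma$ (for $s>s_1$). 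Relatedly, ``picking the sign of $\chi'$'' is not available to you: a cutoff localizing near $\rho=0$ has $\chi'$ of forced sign, so the case distinction cannot be implemented that way.

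The second gap is the regularization and hypothesis bookkeeping. For $s>s_1$ the regularizer cannot push the effective order below $s_1$ without flipping the sign of the elliptic weight term; this is the actual reason the a priori hypothesis $q\notin\WF^{s_1}(u)$ is needed (it makes the $t>0$ pairings finite, together with a nontrivial justification of the integration by parts, cf.\ Lemma~\ref{lem:integratebyparts}), not to ``absorb escape errors via principal-type propagation'': propagating $q\notin\WF^{s_1}(u)$ along outgoing bicharacteristics only gives $H^{s_1}$ regularity, which cannot control full-order ($2s$) error terms microsupported off $L$ --- those must instead be given the favorable sign by the localizer, as above. Likewise, a fixed $\Lambda_t\in\Psi^{-\delta}$ regularizer plus sharp G\r{a}rding does not by itself make quantities like $\|G_{2,t}u\|$ or the pairing with the order-$(2s-1)$ remainder well defined for $t>0$, since $u$ is not known to lie in $H^{s-\delta}$ microlocally; one needs either regularizers of order $-\infty$ (for $s<s_0$) or of order exactly $s_1$ (for $s>s_1$), together with the inductive half-step improvement of regularity (or the sharper construction making the remainder order $-\infty$), none of which appears in your outline. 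Finally, converting the hypothesis on $\Gamma_q\cap\WF^s(u)$ into regularity on a punctured neighborhood of $q$ requires a dynamical statement like Lemma~\ref{lem:geometriclemma}; you gesture at this but should state and prove it.
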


We end this section by stating a theorem useful in scattering theory. As noted above, $L$ is either a submanifold of sinks or a submanifold of sources for $W_p$. As a technical assumption, we take as given a choice of density on $X$. This is needed for the positive-semidefinite assumption below; as discussed below, some more effort should allow this to be removed.

\begin{thm} \label{thm:tau}
Given $P \in \Psi^m(X), Q = (Q_\tau)_{\tau \in [0, 1]} \in L^\infty([0, 1]_\tau, \Psi^m(X))$, and \\ $u_\tau \in L^\infty([0, 1]_\tau, \mathcal D'(X))$
 such that
\begin{itemize}
	\item $P$ has a real-valued homogeneous principal symbol $p$ such that $H_p$ is radial (and nonvanishing) on a conic Lagrangian submanifold $\Lambda \subset \Sigma(P)$,
	\item $Q_\tau$ is positive-semidefinite for $\tau > 0$, and
	\item $P -  i Q_\tau$ is elliptic for $\tau > 0$,
\end{itemize}
then for $q \in \kappa(L)$, there exist $s_0, s_1 \in \RR$ such that
\begin{itemize}
	\item if $\kappa(\Lambda)$ is a sink for $W_p|_{S^*X}$, then for $s < s_0$, the existence of an open neighborhood $U_0 \subset S^*X$ of $q$ disjoint from $$\WF_{L^\infty([0, 1])}^{s - m + 1}((P - i Q_\tau)u_\tau)$$ and from $$\Gamma_q \cap \WF_{L^\infty([0, 1])}^s(u_\tau)$$  implies $q \notin \WF_{L^\infty([0, 1])}^s(u_\tau)$.
	\item if $\kappa(\Lambda)$ is a source for $W_p|_{S^*X}$, then for $s > s_1$,  $$q \notin \WF_{L^\infty([0, 1])}(u_\tau) \backslash \WF_{L^\infty([0,1])}^{s - m + 1}((P - i Q_\tau)u_\tau).$$
\end{itemize}
\end{thm}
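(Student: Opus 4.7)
The plan is to adapt the positive-commutator proof of Theorem~\ref{thm:general} (carried out in Sections~\ref{sect:commutator}--\ref{sect:operatorconstruction}) to the family $P - iQ_\tau$, using the positive-semidefiniteness of $Q_\tau$ to supply an additional non-negative term in the commutator identity. The overall architecture parallels the way Theorem~\ref{thm:melrosetau} refines Theorem~\ref{thm:hormander}: most of the classical construction survives intact, but the ability to propagate in the ``wrong'' direction is lost, which is why only the sink case appears for $s<s_0$ and only the source case for $s>s_1$. I would reuse the same commutants $A_t$ (with $t\in(0,1]$ a regularizing parameter sent to $0$ at the end of the argument) built for Theorem~\ref{thm:general}, choosing the sink- or source-adapted version according to the dynamical configuration at $L$.

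With $B_t=A_t^*A_t$, and noting that positive-semidefiniteness of $Q_\tau$ implies $Q_\tau^*=Q_\tau$, the pairing identity becomes
$$
2\,\mathrm{Im}\langle B_t u_\tau,(P-iQ_\tau)u_\tau\rangle
=\frac{1}{i}\langle(P^*B_t-B_tP)u_\tau,u_\tau\rangle
+\langle(Q_\tau B_t+B_tQ_\tau)u_\tau,u_\tau\rangle.
$$
Modulo operators one order lower, $\frac{1}{2}(Q_\tau B_t+B_tQ_\tau)=A_t^*Q_\tau A_t\ge0$. In the sink case, the commutator construction is arranged so that $\frac{1}{2i}(P^*B_t-B_tP)$ has positive principal symbol near $q$, plus a term supported on $\Gamma_q$ where the a priori regularity hypothesis provides control. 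Both this contribution and the $Q_\tau$ contribution then sit on the favorable side of the identity and cooperate; the left-hand side is bounded using the hypothesis on $\WF_{L^\infty([0,1])}^{s-m+1}((P-iQ_\tau)u_\tau)$ near $q$, and letting $t\to 0$ yields microlocal $H^s$ regularity at $q$ uniformly in $\tau$. Uniformity is automatic because $Q_\tau\in L^\infty([0,1]_\tau,\Psi^m(X))$, so all error estimates inherited from Theorem~\ref{thm:general} are uniform in $\tau$.

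The source case is where the real work lies. In that case the commutator construction produces a principal symbol of the opposite sign, which after rearrangement yields a favorable $\|A_t u_\tau\|^2$ estimate whose error terms are absorbed using an a priori $H^{s_1}$ hypothesis at $q$, as in Theorem~\ref{thm:general}. The $Q_\tau$ contribution, however, now appears on the side opposite to $\|A_t u_\tau\|^2$, and since $A_t^*Q_\tau A_t$ is of strictly higher order than $B_t$, there is no direct a priori bound available for it. This is precisely what forces the technical refinement flagged after Lemmas~\ref{lem:s0operators} and \ref{lem:s1operators} in the introduction: the residual error terms $F_t$ in the commutator construction must be made genuinely \emph{smoothing}, rather than merely one order below leading (which sufficed for Theorem~\ref{thm:general}), so that their pairings against $u_\tau$ are finite irrespective of $u_\tau$'s regularity. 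Producing this smoothing improvement for the source case, uniformly in both regularizers $t$ and $\tau$, is the main technical obstacle, and is the task of Section~\ref{sect:tausource}; once it is in hand the estimate closes exactly as in Theorem~\ref{thm:general}.
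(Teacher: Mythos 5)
Your overall architecture (symmetrizing the $Q_\tau$ term into $A_t^*Q_\tau A_t\geq 0$, the sink/source dichotomy, smoothing error terms) matches the paper's, but there are genuine gaps. First, in the source case you absorb error terms ``using an a priori $H^{s_1}$ hypothesis at $q$, as in Theorem~\ref{thm:general}'' --- but Theorem~\ref{thm:tau} makes no such hypothesis; removing it is precisely the point of that case, and coping with its absence is the main work. The paper's route is: do not regularize in $t$ at all --- use the $t=0$ operators and exploit the ellipticity of $P-iQ_\tau$ for $\tau>0$ (a hypothesis you never invoke) to make all pairings well-defined for each fixed $\tau>0$, then let $\tau\rightarrow 0$. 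Your $t\rightarrow 0$ scheme cannot be justified in the source case: for $t>0$ the commutants of Lemma~\ref{lem:s1operators} are only of order $s_1$ (not $-\infty$), so the pairings require exactly the missing $H^{s_1}$ information, and you cannot instead make them order $-\infty$ for $t>0$ without crossing the threshold and flipping the sign. Moreover, since no regularity uniform in $\tau$ is available, the half-step induction that controls $\lrangle{u_\tau,F u_\tau}$ cannot even start; that, and not the $Q_\tau$ term, is the actual reason the paper must upgrade $F$ and $J$ to $\Psi^{-\infty}$ in Section~\ref{sect:tausource}.

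Second, your sign analysis of the $Q_\tau$ contribution in the source case is backwards. With $A=B^2$, $B^*=B$, the identity is $\frac{1}{2i}(B^2(P-iQ_\tau)-(P^*+iQ_\tau)B^2)=\pm(G_1^2+G_2^2)-BQ_\tau B+E+F+\frac{1}{2}[[B,Q_\tau],B]$, and the estimate closes only when the $\pm$ is $-$, i.e.\ when the squares sit on the same side as the negative semidefinite $-BQ_\tau B$. Since the $s<s_0$ construction carries $\sgn(\lambda)$ and the $s>s_1$ construction carries $-\sgn(\lambda)$, this forces $\lambda<0$ (sink) in the first case and $\lambda>0$ (source) in the second --- that is the concrete origin of the sink/source restrictions in the statement. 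In both stated cases the $Q_\tau$ term cooperates and is simply discarded from the final inequality as a nonnegative term; it is never ``on the side opposite to $\|A_t u_\tau\|^2$,'' and if it were, making $F_t$ smoothing could not rescue a term of order $2s+1$ anyway. Finally, a smaller point: the error in replacing $\frac{1}{2}(Q_\tau B_t+B_tQ_\tau)$ by $A_t^*Q_\tau A_t$ must be two orders down (a double commutator, available when the factor is self-adjoint with real principal symbol), not merely ``one order lower'' as you assert; an error of order $2s$ microsupported at $q$ is exactly at the level of regularity being proved and cannot in general be absorbed.
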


The value of $s_0$ for Theorem~\ref{thm:tau} is the same as in Theorem~\ref{thm:general}, and $s_1$ can be taken to be the lower bound on what $s_1$ can be in Theorem~\ref{thm:general}. 

It is worth noting that we can relax the assumption that $Q_\tau$ is positive-semidefinite for $\tau > 0$. If we have a choice of $\sigma_m(Q_\tau)$ that is positive for $\tau > 0$, then we would like to be able to apply a sharp G\r{a}rding inequality $Q_\tau \geq Q'_\tau$ for $Q'_\tau$ of lower order. If we can make $Q_\tau'$ independent of $\tau$, or at least give it some uniform control in $\tau$, then $Q'$ can then be absorbed in $P$, and the net effect would be a shift in $s_0$ and $s_1$. We elect not to pursue such a uniform sharp G\r{a}rding inequality in this paper, as it is besides the central point. It is easier to relax this positive-semidefinite assumption in special circumstances, though. If, for instance, $Q_\tau = \tau Q$ with a choice of $\sigma_m(Q)$ positive, then we may simply apply sharp G\r{a}rding or a related construction and again absorb a term into $P$.

For all three theorems, $s_0$ and $s_1$ are determined entirely by $\sigma_{m-1}(\frac{P - P^*}{2i})$ and $dp$ around $\kappa^{-1}(q)$. We give explicit formulas for them in Section~\ref{sect:s0s1formulas}, but it is helpful to motivate their formulas in the following sketch.

As mentioned above, two more theorems are contained in Section~\ref{sect:iterativeregularity}. As the results require further definitions, we postpone their statements until then.

\subsection{Sketch of Proofs} \label{sect:sketch}

In this section, we sketch the proofs that follow. This should help motivate the theorem statements, as well as help the reader to separate the essential details of the proofs from the technical details which can be arranged more easily. In the proofs of these statements, we adapt the positive commutator argument that is now standard in microlocal analysis (see, for instance, \cite{hormanderpositivecommutator}, Proposition 3.5.1).  

In particular, in order to prove Theorem~\ref{thm:general}, we would like to construct families of pseudodifferential operators $A_t, G_{1, t}, G_{2, t}, E_t, F_t$ so that $$\frac{1}{2i}(A_t P - P^* A_t) = \pm(G_{1, t}^2 + G_{2, t}^2) + E_t + F_t,$$ where all are of acceptably low order when $t > 0$ (we can take order $-\infty$ when $s < s_0$, but when $s > s_1$, we must stay at or above this threshold). That way, we can make sense of the following pairing with $u$ for $t > 0$:
\begin{align*}
\frac{1}{2i}  \lrangle{u, (A_t P - P^* A_t) u} & = \lrangle{u, (\pm(G_{1, t}^* G_{1, t}+ G_{2, t}^* G_{2, t}) + E_t + F_t) u} \\
\mathrm{Im}(\lrangle{A_t u, P u}) & = \pm(\|G_{1, t} u\| ^2+ \|G_{2, t} u\|^2) + \lrangle{u, E_t u} + \lrangle{u, F_t u}
\end{align*}
We have implicitly chosen (in writing these inner products and $P^*$) ana density for $X$ - as we will argue later, it does not matter which. As $t \rightarrow 0$, we would like $G_{2, t}$ to approach an operator of order $s$, elliptic at the point $q \in L$ which we would like to prove is not in $\WF^s(u)$. This is accomplished if we can bound the left hand side of the above equation, as well as $\lrangle{u, E_t u}$ and $\lrangle{u, F_t u}$. We bound the left hand side by requiring that $A_t$ not only have the correct order but also microsupport contained in some open neighborhood $U_0 \subset S^*X$ on which we assume that $Pu$ has regularity. We bound the $E_t$ term by requiring that $E_t$ have microsupport contained in some neighborhood where we can assume $u$ has regularity. Lastly, we bound the $F_t$ term by requiring that $F_t$ has order $2s -1$, and work by induction, assuming that $U_0 \cap \WF^{s -\frac{1}{2}}(u) = \emptyset$. Notice that $\|G_{1, t} u\|^2$ gets bounded for free here, since it is of the same sign as $\|G_{2, t} u\|^2$. Its inclusion is simply meant to make the operator constructions easier. 

We construct these operators by quantizing real-valued symbols $a_t, g_{1, t}, g_{2, t}, e_t$ so that 
$$\frac{1}{2} H_p a_t + \sigma_{m-1}\bigl(\frac{P - P^*}{2i}\bigr) a_t = \pm(g_{1, t}^2 + g_{2, t}^2)+ e_t.$$
 To do that, we further assume that $U$ has a coordinate chart $\phi = \phi_0 \times \zeta: U \rightarrow V_0 \times \RR_+$ as in Section~\ref{sect:cosphere}. In order to localize to $U$, we take 
$$a_t = (\chi (\phi_0))^2  (\rho_t(\zeta))^2.$$ Here $\chi: V_0 \rightarrow \RR$ is a  cutoff function localizing to $U$, and $\rho_t$  gives us the correct order properties (so for $t = 0$, it is the correct power of $\zeta$, and for $t > 0$, of suitably lower order in $\zeta$). Taking $a_t$ to be a square fixes its sign; as argued in the sketch of Theorem~\ref{thm:tau}, there is a better reason for making this a square.

Define $$\lambda = - H_p \zeta.$$ This is a symbol, homogeneous of degree $m-1$, defined on $U$. Under the assumption that $dp \neq 0$ on $\Lambda$ (and hence that $H_p \neq 0$ on $\Lambda$), we may assume, possibly after shrinking $U$, that $\lambda$ is elliptic on $U$. The thresholds $s_0, s_1$ are chosen precisely so that when $s < s_0$,
\begin{align} \label{eq:thresholdformula}
\chi(\phi_0)^2 \biggl(\frac{1}{2} H_p (\rho_t(\zeta)^2) + \sigma_{m-1}\bigl(\frac{P - P^*}{2i}\bigr) \rho_t(\zeta)^2 \biggr)
\end{align}
and $\lambda$ are of the same sign, and when $s > s_1$, they are of opposite sign. For both cases, we need only have this condition satisfied for $\zeta$ sufficiently large (as we only need to determine our operators up to order $-\infty$), and we may also shrink $U$. We develop explicit formulae in Section~\ref{sect:s0s1formulas}.

Note that since $H_p$ is radial at $q$, $H_p (\chi(\phi_0)^2)$ must vanish at $q$, so the $$\frac{1}{2} \chi (\phi_0)^2 H_p (\rho_t (\zeta)^2)$$ term must be what contributes to $g_{2, t}^2$. This also explains the inclusion of $$\sigma_{m-1}\bigl(\frac{P - P^*}{2i}\bigr) a_t$$ in the definitions of $s_0$ and $s_1$ above. As we assume no control over this term, we must dominate it by $\frac{1}{2} H_p a_t$. In the positive commutator proof of Theorem~\ref{thm:hormander}, we can dominate this term with $\rho_t^2 H_p (\chi (\phi_0)^2)$, but here, this is not an option, and we must rely on the growth rate of $\rho_t$ to dominate this term.

As we shall see in Section~\ref{sect:coordinates}, $\kappa_* W_p|_{hS} (= \kappa_* (\zeta^{1 - m} H_p)|_{\hS}$) is a sink or source depending on whether $\lambda$ is negative or positive (see also Section~\ref{sect:compactifiedcontinued}, for what is perhaps a clearer picture). Thus in the $s < s_0$ case, the sign of \eqref{eq:thresholdformula} does not match with the sign of $\rho_t H_p (\chi \circ \phi_0)$ everywhere. We must then have the regularity assumption on $u$ in some deleted neighborhood of $q$. As we will show below in Section~\ref{sect:geometriclemma}, this amounts to assuming regularity on bicharacteristics which approach $q$, as stated in Theorem~\ref{thm:general}. When $s>s_1$, the signs of these two terms can be made to match everywhere on the characteristic set, and we no longer need this assumption. In regularizing, however, we cannot pass through this threshold $s_1$ as $t \rightarrow 0$, as the sign would switch, taking away any hope of getting the desired bound.  Thus we need an a priori regularity assumption $q \notin \WF^{s_1}(u)$, and we regularize from that level. This a priori assumption also allows us to have the inductive assumption $U \cap \WF^{s - \frac{1}{2}}(u) = \emptyset$, as we can start the induction at $s = s_1 + \frac{1}{2}$ (if the expected conclusion is to be stronger than this), but as we shall see below in Section~\ref{sect:tausource}, this is for convenience rather than necessity, as we can actually take $F_t \in \Psi^{-\infty}(X)$.

We use a similar argument in the proof of Theorem~\ref{thm:tau}. One key difference is that, by assumption, $P - i Q_\tau$ is elliptic for $\tau > 0$, so elliptic regularity implies some regularity for $u_\tau$ for $\tau > 0$. In a sense, this regularizes for us, and we can use our limiting, $t = 0$, operators (hence in what follows we take away the subscripts and write, for instance, $A$ for $A_0$). This allows us to take away the a priori assumption $q \notin \WF^{s_1}(u)$. In taking away this a priori regularity, we can no longer have the inductive assumption  $U \cap \WF^{s - \frac{1}{2}}(u) = \emptyset$ (which would control the $\lrangle{u, F_0 u}$ term), as we cannot start our induction at $s = s_1 - \frac{1}{2}$. As mentioned above, though, with greater care in symbol construction, we can actually take $F \in \Psi^{-\infty}$, so this is not a real issue.

Another key difference is that, as we have regularity on $(P - i Q_\tau)u$, we modify the argument to involve the ``commutator'' $\frac{1}{2i}(A (P - i Q_\tau) - (P^* + i Q_\tau)A)$ (note that since $Q_\tau$ is positive semidefinite, we assume $Q_\tau^* = Q_\tau$). This gives us an extra term:
\begin{align*}
\frac{1}{2i}(A (P - i Q_\tau) - (P^* + i Q_\tau)A)  & = \frac{1}{2i}(A P - P^* A) - \frac{1}{2}(A Q_\tau + Q_\tau A) \\
\end{align*}
We must be careful with this extra term: for $\tau > 0$, it is one order higher than we would like $G_2$ to be, so the only way to control it is to ensure that it is, up to two orders lower, of the same sign as $\pm(G_1^2  + G_2^2)$ (i.e., have them both positive semidefinite or negative semidefinite). We chose, arbitrarily, $A$ to have nonnegative principal symbol, but in order to get another order of control, we take (as this is easy to arrange) $A = B^2$ with $B^* = B$. We can then construct operators so that: $$\frac{1}{2i}(B^2 (P - i Q_\tau) - (P^* + i Q_\tau) B^2) = \pm(G_1^2 + G_2^2) - B Q_\tau B + E + F_\tau.$$ $B Q_\tau B$ is a positive semidefinite operator, so we must ensure that the $\pm$ above is a $-$.  As a result, the $s < s_0$ argument works only when $\lambda < 0$, and the $s > s_1$ argument works only when $\lambda > 0$; hence the sink/source assumptions in the statement of Theorem~\ref{thm:tau}.

In order so that we do not need to construct $A_t$ for the proof of Theorem~\ref{thm:general} and then go back and construct $B$ so that $A_0 = B^2$ for the proof of Theorem~\ref{thm:tau}, we simply work with $B_t$, the quantization of $b_t$, throughout.

\subsubsection{Explicit formulas for $s_0, s_1$} \label{sect:s0s1formulas}

Here we give explicit formulas for the thresholds $s_0$ and $s_1$, using the coordinates and definitions of Section~\ref{sect:sketch}. At the end of the section, we argue that the formulas are independent of choices made. In the formulas below, we choose any representative for $\sigma_{m-1}(\frac{P - P^*}{2i})$ and write it simply as $\sigma_{m-1}(\frac{P - P^*}{2i})$. In the homogeneous case of Theorem~\ref{thm:classical}, the homogeneous choice is unique.

We start by determining the values for Theorem~\ref{thm:general}. As noted in the above sketch, we choose $s_0$ so that \eqref{eq:thresholdformula} remains the same sign as $\lambda$ on $U$, for all $t \in [0, 1]$, and we choose $s_1$ so that it has sign opposite to that of $\lambda$. This does not depend on the form of $\rho_t$, but only on its order of growth in $\zeta$. A quick calculation verifies that at a point $w \in U$, the critical order is the following:
$$f(w) := \frac{\sigma_{m-1}(\frac{P - P^*}{2i}) \zeta}{\lambda}(w)$$
That is, at a point $w \in U$, \eqref{eq:thresholdformula} is the same sign as $\lambda$ if and only if $\frac{\rho_t'}{\rho_t}(\zeta(w)) < \frac{f}{\zeta}(w)$, and of the opposite sign as that of $\lambda$ if and only if $\frac{\rho_t'}{\rho_t} > \frac{f}{\zeta}(w)$.

As noted in Section~\ref{sect:commutator}, we need $B_0$ to have order $\frac{2s - m + 1}{2}$ in both the $s < s_0$ case and the $s > s_1$ case. We then define $s_0$ so that on the support of the symbols, $s < f + \frac{m-1}{2}$. We may, of course, make the supports as small as we like, so long as $g_{2, 0}$ is nonzero on $\kappa^{-1}(q)$, and further, as the values of the symbols are irrelevant for $\zeta \leq \zeta_0$(in the sense that order $-\infty$ error terms are irrelevant), we only need this to hold for $\zeta > \zeta_0 > 0$. It is thus optimal to choose (and so we take as definition)
$$s_0 := \sup_{U_0' \subset U_0 \: \mathrm{with} \: q \in U_0', \zeta_0 > 0} \biggl( \inf_{\{w \in U | \kappa(w) \in U_0', \zeta(w) > \zeta_0\}}  f(w) + \frac{m - 1}{2}\biggr).$$

 If we assume that $\sigma_{m-1}(\frac{P - P^*}{2i})$ has homogeneous representative, then this simplifies. With that choice for $\sigma_{m-1}(\frac{P - P^*}{2i})$ in the definition of $f$, $f$ is homogeneous of degree $0$, so we may consider it a function on $S^*X$, and take
$$s_0 = f(q) + \frac{m - 1}{2}.$$

To be concrete,  we note that in the $s < s_0$ case, we take $$\rho_t(\zeta) = \zeta^\frac{2s - m + 1}{2} \hat{\chi}(t\zeta),$$ where $\hat{\chi} \in C^\infty_c(\RR)$ is identically $1$ in a neighborhood of $0$. The reader may explicitly verify that the above choice of $s_0$ works.

In the $s > s_1$ case, we must regularize so that $g_{2, t}$ has, for $t>0$, order $s_1$ because of the assumed a priori regularity $q \notin \WF^{s_1}(u)$. For $t>0$ we must then have $b_t$ of order $\frac{2s_1 - m +1}{2}$. Thus we must have $s_1 > f + \frac{m - 1}{2}$ on the supports of the symbols. As above, we may shrink the supports of the symbols, and further this only need be valid for $\zeta > \zeta_0 > 0$. It is thus optimal to choose (and so for Theorem~\ref{thm:general} we take as the defining requirement) any $s_1$ such that
\begin{equation}
s_1 > \inf_{U_0' \subset U_0, q \in U_0', \zeta_0 > 0} \biggl( \sup_{\{w \in U | \kappa(w) \in U_0', \zeta(w) > \zeta_0\}}  f(w) + \frac{m - 1}{2}\biggr). \tag{Thm~\ref{thm:general}}
\end{equation}

 If we assume that $\sigma_{m-1}(\frac{P - P^*}{2i})$ has homogeneous representative, then this again simplifies:
$$s_1  > f(q) + \frac{m - 1}{2}$$
where since $f$ is then homogeneous of degree $0$, we take it to be a function on $S^*X$. Hence, as in the statement of Theorem~\ref{thm:classical}, we may choose any $s_1 > s_0$.

To be concrete, we note that when $s > s_1$, we take $\rho_t(\zeta) = \zeta^\frac{2s - m + 1}{2} (1 + t\zeta)^{s_1 - s}$. The reader may again explicitly verify that any such above choice of $s_1$ works.

%There are cases in which we can actually take $s_1 = s_0$, or more generally, cases where we can allow $s_1$ to realize the lower bound given above (this is why, in the statement of Theorem~\ref{thm:general}, we simply say that such an $s_1$ exists instead of stating that there is a lower bound on $s_1$). We state the simplest case. When $\sigma_{m-1}(\frac{P - P^*}{2i}) = 0$ in some neighborhood of $\kappa^{-1}(q)$ (that is, $0$ is a representative for $\sigma_{m-1}(\frac{P - P^*}{2i})$), we can allow $$\frac{1}{2} H_p \rho_t^2 = \frac{1}{2} H_p (\zeta^{2s - m + 1} (1 + t \zeta)^{2(s_1 - s)})$$ to be one order lower in $\zeta$ for $t > 0$ and thus we can take $$s_0 = s_1 = \frac{m-1}{2}.$$ Note that the sign of $\frac{1}{2}H_p\rho_t^2$ is preserved: if we do the explicit computation, it comes down to the fact that we take $s > s_1 = \frac{m-1}{2}$.

In order to prove Theorem~\ref{thm:tau}, we do not need to regularize, and we simply take the operators and symbols with $t = 0$. Thus the value of $s_0$ is the same in this case, and we can take $s_1$ to realize the lower bound for $s_1$ in Theorem~\ref{thm:tau}:
\begin{equation}
s_1 = \inf_{U_0' \subset U_0, q \in U_0', \zeta_0 > 0} \biggl( \sup_{\{w \in U | \kappa(w) \in U_0', \zeta(w) > \zeta_0\}}  f(w) + \frac{m - 1}{2}\biggr). \tag{Thm~\ref{thm:tau}}
\end{equation}
In the case where $\sigma_{m-1}(\frac{P - P^*}{2i})$ has a homogeneous choice, we can take $s_0 = s_1$. 

The above determined values of $s_0$ and $s_1$ may appear to depend on the choices of $\zeta$, representative of $\sigma_{m - 1}(\frac{P - P^*}{2i})$ (when $\sigma_{m-1}(\frac{P - P^*}{2i}) $ is not assumed to have a homogeneous choice), and density on $X$ (which determines $P^*$). The values are (as one should hope) independent of such choices.
\begin{itemize}
\item If we instead chose any other $\zeta_1: U \rightarrow \RR_+$, homogeneous of degree $1$, then we would have $\zeta_1 = g(\phi_0) \zeta$ for some $g: V_0 \rightarrow \RR_+$. As $\lambda$ depends on $\zeta$, we would  define a different 
\begin{align*}
\lambda_1 & = - H_p \zeta_1 \\ 
&  = -\zeta H_p g(\phi_0) - g(\phi_0) H_p \zeta \\
& = -\zeta H_p g(\phi_0) + g(\phi_0) \lambda
\end{align*}
As $H_p$ is radial along $\kappa^{-1}(q)$, the first term vanishes on $\kappa^{-1}(q)$, so it does not contribute in the formulas. Further, the $g(\phi_0)$ factors cancel in the fraction. Thus our formulas are independent of choice of $\lambda$.

\item That our choice of representative for $\sigma_{m-1}(\frac{P - P^*}{2i})$ does not affect the values of $s_0$ and $s_1$ is clearer: the choice is determined up to one order lower, which does not contribute in the limit $\zeta_0 \rightarrow \infty$.

\item If we chose a different density, then the adjoint operator to $P$ would be of the form $f^{-1} P^* f$, where $P^*$ is the adjoint from the original density choice, and $f \in C^\infty(X)$. We have $f^{-1} P^* f = P^* + f^{-1}[P^*, f]$. Since $H_p$ is radial at $\kappa^{-1}(q)$, $H_p f = 0$, so $\sigma_{m-1}(f^{-1}[P^*, f])$ vanishes at $\kappa^{-1}(q)$. This difference does not contribute in the formulas for $s_0$ and $s_1$. 
\end{itemize}

\section{Classical Dynamics} \label{sect:classicaldynamics}

In order to prove Theorem~\ref{thm:general}, we first must have some understanding of the symplectic geometry. First, we choose some convenient coordinates, then as a consequence we derive a geometric lemma useful for the $s < s_0$ case. From now on, we fix $P$ as in Theorem~\ref{thm:general}, and set $$\Sigma = \Sigma(P).$$

\subsection{Choice of coordinates} \label{sect:coordinates}

Let $\IL = \{f \in C^\infty(\Sigma \cap U)| \  f|_\Lambda = 0\}$, the ideal of smooth functions on $\Sigma \cap U$ which vanish on $\Lambda$, where $U$ is a conic open subset of $T^*X \backslash o$. Using the facts that $H_p$ is radial on $\Lambda$ and that $\Lambda$ is conic, we have
$$H_p: \IL \rightarrow \IL$$
and thus, as would be a consequence with any such vector field,
$$H_p: \IL^2 \rightarrow \IL^2$$
so we have the induced
$$\tilde H_p: C^\infty(\Sigma \cap U)/\IL^2 \rightarrow C^\infty(\Sigma \cap U)/\IL^2.$$
Our goal in this section is to choose coordinates which correspond to eigenvectors of this map, for a particular choice of $U$. We assume that $P \in \Psi^m(X)$ is as in the statement of Theorem~\ref{thm:general}.

\begin{lem} \label{lem:coordinates}
There exists a conic open neighborhood $U \subset T^*X\backslash o$ of $\kappa^{-1}(q)$ with coordinate chart $$\phi: U \rightarrow V \subset \RR_{\eta_0} \times \RR^{n-1}_\alpha \times \RR^{n-1}_\beta \times \RR_{+, \zeta}$$ such that $\kappa^{-1}(q) = \{\eta_0 = 0, \alpha = \beta = 0\}$, $\Lambda \cap U = \{\eta_0 = 0, \alpha = 0\}$, and $\Sigma \cap U = \{ \eta_0 = 0\}$, with $\zeta$ is homogeneous of degree $1$ and $\eta_0, \alpha, \beta$  homogeneous of degree $0$ (with respect to the $\RR_+$ action $\mu$); in addition,
\begin{align}
	\iota^* H_p \alpha_i  & \in \frac{\lambda}{\zeta}\alpha_i + \IL^2 \\
	\iota^* H_p \beta_i & \in \IL^2 \\
	H_p \zeta  & = - \lambda
\end{align}
with $\lambda \in S^m(U)$ elliptic, where $\iota: \Sigma(P) \cap U \hookrightarrow U$ is the inclusion map.
\end{lem}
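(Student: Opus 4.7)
The plan is to reduce $(\Lambda, p)$ to a convenient normal form and then read off the coordinates. Since $H_p$ is radial on $\Lambda$, the radial vector field $R$ is tangent to $\Lambda$, so by H\"ormander's homogeneous symplectic equivalence for conic Lagrangian submanifolds, a conic canonical transformation on a conic neighborhood of $\kappa^{-1}(q)$ puts $\Lambda$ into the conormal form $\Lambda = \{x = 0\}$ in canonical coordinates $(x, \xi)$ on $T^*\RR^n$ (I will treat only this generic case; higher-dimensional conormals are analogous). The conditions $p|_\Lambda = 0$ together with radiality of $H_p$ on $\Lambda$ pin down the Taylor expansion to
\[
p(x, \xi) = -c(\xi)\, x \cdot \xi + O(|x|^2),
\]
where $c$ is homogeneous of degree $m - 1$ and nonvanishing at the relevant $\xi$ (equivalently, the nonvanishing of $H_p$ on $\Lambda$).

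Next I would build the coordinates. After reordering so that $\xi_n \neq 0$ at $q$, take $\zeta = \xi_n$ (homogeneous of degree $1$), $\beta_j = \xi_j/\xi_n$ for $j = 1, \dots, n-1$, $\alpha_i = x_i$ for $i = 1, \dots, n-1$, and $\eta_0 = p/\zeta^m$. Define $\lambda := -H_p\zeta$; using $H_p|_\Lambda = cR$ (read off from the Taylor form) and $R\zeta = \zeta$ gives $\lambda|_\Lambda = -c\zeta$, so $\lambda \in S^m(U)$ is elliptic after shrinking $U$. The containments $\Sigma \cap U = \{\eta_0 = 0\}$ and $\Lambda \cap U = \{\eta_0 = 0,\ \alpha = 0\}$ follow from $p|_\Sigma = 0$ together with the relation $x\cdot \xi \equiv 0 \pmod{\IL^2}$ on $\Sigma$, which lets us recover $x_n$ from the other $\alpha$'s modulo $\IL^2$ (since $\xi_n \neq 0$). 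For $\alpha_i$ the key calculation is $\partial_{\xi_i}p = -c\,x_i - (\partial_{\xi_i}c)(x\cdot \xi) + O(|x|^2)$, and since $x\cdot \xi = O(|x|^2)$ on $\Sigma$, we get $\iota^*H_p\alpha_i \equiv -c\,\alpha_i \equiv (\lambda/\zeta)\alpha_i \pmod{\IL^2}$, using $\lambda/\zeta = -c$ on $\Lambda$.

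The main technical obstacle is ensuring $\iota^*H_p\beta_j \in \IL^2$, not merely $\IL$. For the naive choice $\beta_j = \xi_j/\xi_n$ one has $H_p\beta_j|_\Lambda = 0$ (because $R\beta_j = 0$ and $H_p|_\Lambda$ is a scalar multiple of $R$), so $H_p\beta_j \in \IL$, but the quadratic-in-$x$ remainder of $p$ produces nonzero $O(\alpha)$ contributions, yielding $\iota^*H_p\beta_j \equiv \sum_k t_{jk}\alpha_k \pmod{\IL^2}$ for some smooth $t_{jk}$ on $\Lambda$. To kill these, replace $\beta_j \to \beta_j - \sum_k s_{jk}\alpha_k$ for homogeneous degree-$0$ functions $s_{jk}$; using $\iota^*H_p\alpha_k \equiv (\lambda/\zeta)\alpha_k \pmod{\IL^2}$ and $H_p s_{jk}|_\Lambda = cR s_{jk}|_\Lambda = 0$ (since $s_{jk}$ is degree $0$), matching coefficients of $\alpha_k$ on $\Lambda$ reduces the desired condition to the pointwise algebraic equation $(\lambda/\zeta)|_\Lambda\,s_{jk} = -t_{jk}$, solvable because $\lambda/\zeta \neq 0$. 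After this correction and a further shrinking of $U$, $(\eta_0, \alpha, \beta, \zeta)$ gives the desired homogeneous coordinate chart with all three displayed identities.
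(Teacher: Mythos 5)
Your proof is correct, and it follows the same skeleton as the paper's: reduce $\Lambda$ to a model by a homogeneous canonical transformation, set $\zeta = \xi_n$ and $\eta_0 = p/\zeta^m$, take the remaining degree-zero functions as $\alpha$ and $\beta$, and then correct the functions tangent to $L$ so that the linear error disappears modulo $\IL^2$. The difference is that you run it on the dual normal form. The paper (via H\"ormander's Theorem 21.2.8) arranges $\Lambda = N^*\{x_n = 0\}$, so its $\alpha_i = \xi_i/\xi_n$ are fiber variables, its $\beta_i$ are corrected base variables, the $\IL^2$ bookkeeping rests on solving $p = 0$ for $x_n = f(y,\theta)$ by the implicit function theorem and checking $f \in \IL^2$, and the corrector is written explicitly as $y_i - \frac{\partial_{\theta_i}p}{\partial_z p}$. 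You use the fiber model $\Lambda = \{x = 0\}$, where radiality forces the Taylor form $p = -c(\xi)\,x\cdot\xi + O(|x|^2)$; accordingly your $\alpha_i = x_i$ are base variables, your $\beta_j$ are corrected fiber variables, and the corrector is produced by solving a pointwise linear equation on $\Lambda$ rather than by an explicit formula. The two models are exchanged by a homogeneous canonical transformation swapping $x'$ and $\xi'$, so nothing essential changes: your route makes the structure of $H_p$ near $\Lambda$ visible at a glance, while the paper's yields explicit correctors. Your parenthetical hedge about a ``generic case'' is unnecessary: every conic Lagrangian is locally equivalent to either model by a homogeneous canonical transformation (Legendrian Darboux in $S^*X$), so no case distinction arises.

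Three small repairs, none affecting the argument: normalize further so that $\kappa^{-1}(q) = \{x = 0,\ \xi' = 0,\ \xi_n > 0\}$, so that $\beta$ vanishes on $\kappa^{-1}(q)$ as the lemma requires; choose the $t_{jk}$ homogeneous of degree $m-1$ (e.g. $t_{jk} = \partial_{\alpha_k}\bigl(\iota^* H_p \beta_j\bigr)|_\Lambda$) so that the solution $s_{jk}$ is genuinely of degree $0$; and with your conventions the pointwise equation should read $(\lambda/\zeta)|_\Lambda\, s_{jk} = t_{jk}$ rather than $-t_{jk}$, which of course does not affect solvability.
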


\begin{rmk}
If $U'$ is any other conic open neighborhood of $\kappa^{-1}(q)$, then $U \cap U'$ also has such a coordinate chart, i.e., we can always shrink $U$, so long as it still contains $\kappa^{-1}(q)$, and it will still have the desired coordinate chart. This will be useful as we prove Theorem~\ref{thm:general}.
\end{rmk}

\begin{proof}

We start off by choosing $U$, an open conic neighborhood of $\kappa^{-1}(q)$, so that it has a canonical coordinate chart $\varphi: U \rightarrow V' \subseteq \RR^n_x \times \RR^n_\xi$, such that $\varphi(\Lambda \cap U) = V' \cap N^*\{x_n = 0\} \backslash o$ and $\kappa^{-1}(q) = \{x = 0, \xi_1 = \ldots = \xi_{n-1} = 0, \xi_n > 0\}$, where $N^*Y$ is the conormal bundle of $Y \subset X$, so in this case $\Lambda \cap U = U' \cap \{x_n = 0, \xi_1 = \ldots = \xi_{n-1} = 0\}$. This choice can be made: see, for instance, \cite{hormander3}, Theorem 21.2.8. We shrink $U$ so that $\xi_n > 0$ on $U$. 

Define an intermediate coordinate chart $$\phi_1: V' \rightarrow V'' \subseteq \RR^{n-1}_y \times \RR_z \times \RR^{n-1}_\theta \times \RR_\zeta$$ by
\begin{align*}
y_i & = x_i, i<n \\
z & = x_n \\
\theta_i & = \frac{\xi_i}{\xi_n}, i  < n\\
\zeta & = \xi_n,
\end{align*}
so $y, z, \theta$ are homogeneous of degree $0$, and $\zeta$ is homogeneous of degree 1. 

In what follows we sometimes write $p$ for $p \circ \varphi^{-1} \circ \phi_1^{-1}$, and similarly for other functions, in order to make formulas less cluttered. We have 
\begin{align*}
\phi_{1*} \partial_{\xi_i} & = \frac{1}{\zeta} \partial_{\theta_i}, i < n \\
\phi_{1*} \partial_{\xi_n} & = \partial_\zeta - \sum_i \frac{\theta_i}{\zeta} \partial_{\theta_i} \\
(\phi_1^{-1})^* d\xi_i & = \zeta d\theta_i + \theta_i d\zeta \\
(\phi_1^{-1})^* d\xi_n & = d\zeta
\end{align*}
and thus 
$$\phi_{1*} \varphi_* H_p = \partial_\zeta p \partial_z + \frac{1}{\zeta} \biggr(\sum^{n - 1}_{i = 1} (\theta_i \partial_z p - \partial_{y_i}p)\partial_{\theta_i} + \partial_{\theta_i} p( \partial_{y_i} - \theta_i \partial_z) \biggl) - (\partial_z p) \partial_\zeta.$$
If we let $\omega$ be the standard symplectic form on $T^*\RR^n$, we have 
$$(\varphi^{-1} \circ \phi_1^{-1})^* \omega = dz \wedge d\zeta + \sum^{n-1}_{i=1} dy_i \wedge (\zeta d\theta_i + \theta_i d\zeta).$$

Noting that $\phi_1(\Lambda) = \{z = 0, \theta = 0\}$, $p|_\Lambda = 0$ implies that $\partial_{y_i} p = \partial_\zeta p = 0$ on $\phi_1(\varphi(\Lambda))$. In order that $H_p$ be radial on $\Lambda$, we must have that $\partial_{\theta_i} p = 0$ on $\phi_1(\varphi(\Lambda))$ as well. In order for nondegeneracy $dp \neq 0$ to hold, we must have $\partial_z p \neq 0$ on $\Lambda$. After potentially shrinking $U$ further (and so also shrinking $V'$ and $V''$), there is, by the implicit function theorem, an 
$$f: \{(y, \theta, \zeta)\: | \: \exists \: z \: \mathrm{with} \: (y, z, \theta, \zeta) \in V''\} \rightarrow \RR$$
such that 
$$p(\phi_1^{-1}(y, f(y, \theta, \zeta), \theta, \zeta)) = 0$$
and $f(0, 0, 1) = 0$.
As $p$ is homogeneous, we have $\partial_\zeta f = 0$, and using the above conditions on $p$ at $\Lambda$, we have $\partial_{y_i} f = \partial_{\theta_i} f = 0$ on $\phi_1(\varphi(\Lambda))$, for all $i$, so in particular $f(y, 0, \zeta) = 0$. As this implies that $\partial_{\theta_i} \partial_{y_j} f = \partial_{y_i} \partial_{y_j} f = 0$, $f(y, \theta, \sigma) \in \IL^2$ (considered a function on $\Sigma \cap U$ because $y, \theta, \zeta$ are coordinates for $\Sigma \cap U$). Thus we have the following:
\begin{align*}
\iota^* \varphi^* \phi_1^* \partial_{y_i} p & \in \IL^2 \\
\iota^* \varphi^* \phi_1^* \partial_{\theta_i} p & \in \IL
\end{align*}

We choose $\alpha_i = \varphi^* \theta_i$ and $\eta_0 = \frac{p}{\varphi^* \zeta^m}$. To finish the lemma, it suffices to choose $\beta_i(y, \theta)$ with $\partial_{y_j} \beta_i = \delta_{ij}$ on $\Lambda$ and 
$$\iota^* \varphi^* \phi_1^* \left( \sum_j \theta_j \partial_z p \: \partial_{\theta_j} \beta_i + \partial_{\theta_j}p \: \partial_{y_j} \beta_i \right) \in \IL^2.$$
 This is easy to accomplish: we can, for instance, let 
$$\beta_i = \varphi^* (y_i - \frac{\partial_{\theta_i} p}{\partial_z p}(y, f(y, \theta), \theta))$$
where we above omit dependence on $\zeta$, since $\frac{\partial_{\theta_i} p}{\partial_z p}$ is homogeneous of degree $0$. Lastly, in order to assure that $\lambda = \varphi^* \phi_1^* (\partial_z p)$ is elliptic, we may need to shrink $U$.
\end{proof}

\subsubsection{The compactified cotangent bundle picture, continued} \label{sect:compactifiedcontinued}

This section is optional and meant to give a nice picture of the dynamics involved. Let $\overline{\Sigma} = \Sigma \cup \hS \subset T^*X \cup S^*X = \overline{T}^*X$, and $\overline{\Lambda} = \Lambda \cup L \subset \overline{T}^*X$. Then given coordinates as in Lemma~\ref{lem:coordinates}, we define $x$, a boundary defining function defined on $\tilde{U} = U \cup \kappa(U) \subset \overline{T}^*X$, as in Section~\ref{sect:compactified}: $x = \frac{1}{\zeta}$ on $U$, and $x = 0$ on the boundary $\kappa(U)$. Further, $\eta_0, \alpha,$ and $\beta$ extend to $\tilde{U}$, and together with $x$ give a coordinate chart for $\tilde U$. $W_p = x^{m-1}H_p$ extends to a vector field on $\tilde U$, tangent to the boundary (that is, $W_p \in \mathcal V_b(\tilde U)$), and for this section we take $W_p$ to be this extension. 

The eigenvalue $\lambda_0$ mentioned in Sections~\ref{sect:cosphere} and \ref{sect:compactified} is the value of $x^m \lambda$ at $q$ (note that $\frac{\lambda}{\zeta^m}$ is homogeneous of order $0$, so it extends to $\tilde U$), so here we define $\lambda_0 = x^m \lambda \in C^\infty(\tilde U)$. If we set $\mathcal I_{\overline \Lambda, \tilde U} = \{f \in C^\infty(\overline \Sigma \cap \tilde U) | \ f|_L = 0\}$ and $\iota_{\overline \Sigma}: \overline \Sigma \hookrightarrow \overline T^*X$ inclusion, then
\begin{align*}
\iota_{\overline \Sigma}^* W_p \alpha_i & \in \lambda_0 \alpha_i + \mathcal I_{\overline{\Lambda}, \tilde{U}}^2 \\
\iota_{\overline \Sigma}^* W_p \beta_i & \in \mathcal I_{\overline{\Lambda}, \tilde{U}}^2 \\
W_p x  & = \lambda_0 x.
\end{align*}
In particular, the linearization of $W_p|_{\overline \Sigma}$ at $q$ has two eigenvalues, $\lambda_0(q)$ (of multiplicity $n-1$) and $0$ (of multiplicity $n-1$). Thus we see the sink/source behavior at $L$. 

\begin{figure}
\center
\def\svgwidth{\columnwidth}
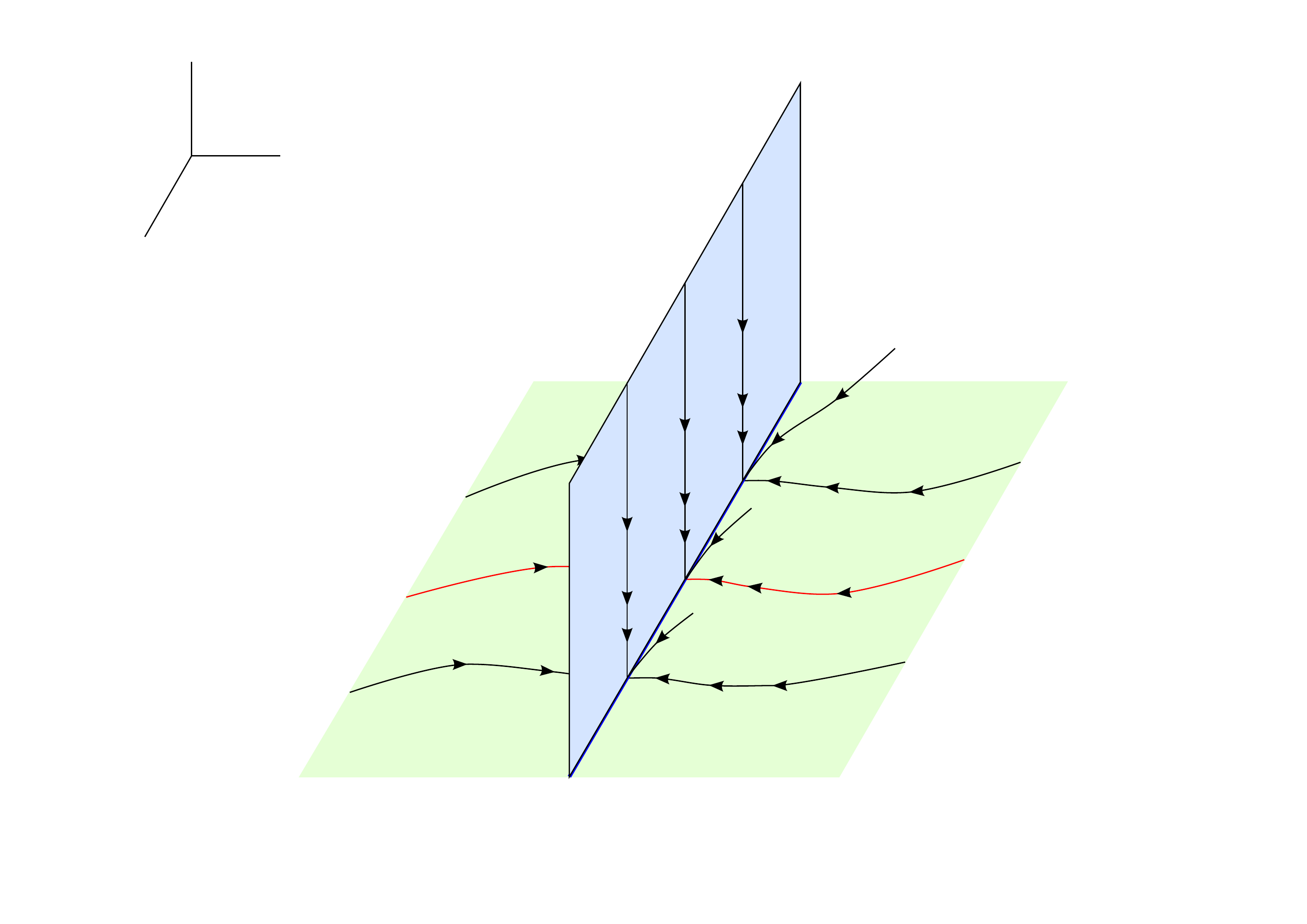
\caption{$\overline{\Sigma}$, in the case where $L$ is a sink, using the coordinates $x, \alpha, \beta$.}
\label{fig:compactified}
\end{figure}

\subsection{Geometric Lemma} \label{sect:geometriclemma}

We now state and prove a lemma which takes the regularity assumed on $u$ in the $s < s_0$ case in the statement of Theorem~\ref{thm:general}, and gives us regularity in an open subset of $S^*X$. This essentially depends on the fact that the flow lines of $\kappa_* W_p$ are well-behaved close to $L$. As before, we take $\hS = \kappa(\Sigma)$. We take $P$ as in the statement of Theorem~\ref{thm:general}, and $Q_\tau$ as in the statement of Theorem~\ref{thm:tau}. 

\begin{lem} \label{lem:geometriclemma}
Given an open neighborhood $W \subset \hS$ of $\Gamma_q \cap U_0$ for some open neighborhood $U_0 \subset S^*X$ of $q$, there is an open neighborhood $W' \subset \hS$ of $q$ such that $W' \backslash L \subseteq \{\exp(t \kappa_* W_p)w \: | \ w \in W, t \geq 0\}$ if $L$ is a sink for $\kappa_* W_p$ (respectively, $t \leq 0$ if $L$ is a source for $\kappa_* W_p$).
\end{lem}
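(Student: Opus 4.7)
The plan is to use the coordinates from Lemma~\ref{lem:coordinates} restricted to $\hS$: near $q$ we have $(\alpha,\beta)$ with $q$ at the origin, $L = \{\alpha = 0\}$, and
\[
\kappa_* W_p \alpha_i = \lambda_0 \alpha_i + O(|\alpha|^2), \qquad \kappa_* W_p \beta_i = O(|\alpha|^2),
\]
with $\lambda_0$ nonvanishing at $q$; the sink case corresponds to $\lambda_0(q) < 0$, which I treat, the source case being identical after time reversal. Fix a small $r > 0$. For $w' = (\alpha_1, \beta_1)$ close to $q$ with $\alpha_1 \neq 0$, the backward integral curve $\gamma(s) = \exp(-s\kappa_* W_p)(w')$ satisfies $\tfrac{d}{ds}|\alpha|^2 \geq c|\alpha|^2$ for some $c > 0$ (while $|\alpha|$ is small), so $|\alpha(s)|$ grows at least exponentially and reaches $r$ at some first time $T_* < \infty$. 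Integrating $\dot\beta = O(|\alpha|^2)$ against this exponential profile yields the uniform estimate
\[
|\beta(T_*) - \beta_1| \leq C \int_0^{T_*} |\alpha(s)|^2 \, ds \leq C' r^2.
\]

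The same estimate applied to orbits lying in $\Gamma_q$ (which must approach $q = 0$ in forward time and hence have $|\alpha|$ decaying exponentially) gives $|\beta_0| \leq C''|\alpha_0|^2$ for any $(\alpha_0, \beta_0) \in \Gamma_q$ near $q$. In fact $\Gamma_q \cup \{q\}$ coincides near $q$ with the strong stable manifold of the fixed point $q$, a smooth submanifold tangent to the $\alpha$-subspace at the origin, so locally a graph $\beta = G(\alpha)$ with $|G(\alpha)| \leq C''|\alpha|^2$. For $r$ small, $\Gamma_q \cap \{|\alpha| = r\} \cap U_0$ is therefore a compact smooth sphere, and since $W$ is an open neighborhood of it in $\hS$, compactness yields a uniform $\delta_{\min} > 0$ such that every point $(\alpha_*, \beta_*) \in \hS$ with $|\alpha_*| = r$ and $|\beta_* - G(\alpha_*)| < \delta_{\min}$ lies in $W$.

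Now shrink $r$ so that $(C' + C'')r^2 < \delta_{\min}/2$ and so that the cylinder $\{|\alpha| \leq r, |\beta| \leq \delta_{\min}\}$ lies inside both the coordinate patch and $U_0$; set $W' = \{|\alpha| < r, |\beta| < \delta_{\min}/2\}$. For each $w' \in W' \setminus L$, the backward orbit reaches $|\alpha| = r$ at time $T_*$ with
\[
|\beta(T_*) - G(\alpha(T_*))| \leq |\beta_1| + |\beta(T_*) - \beta_1| + |G(\alpha(T_*))| < \delta_{\min}/2 + C'r^2 + C''r^2 < \delta_{\min},
\]
so $\gamma(T_*) \in W$, as required. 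The source case is handled symmetrically with the direction of flow reversed. The one technical wrinkle is confirming that the orbit stays inside the coordinate patch throughout $[0, T_*]$, which is automatic from the uniform $O(r^2)$ bound on $|\beta(s) - \beta_1|$ derived above together with $|\alpha(s)| \leq r$ for $s \in [0, T_*]$.
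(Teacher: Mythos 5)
Your strategy (direct ODE estimates in the $(\alpha,\beta)$ coordinates of Lemma~\ref{lem:coordinates}) is a reasonable alternative to the paper's blow-up argument, and the individual estimates you state (exponential growth of $|\alpha|$ along the backward flow, the drift bound $|\beta(T_*)-\beta_1|\le C'r^2$, the quadratic bound $|\beta_0|\le C''|\alpha_0|^2$ on $\Gamma_q$) are correct. But the final step has a genuine circularity that can make the argument fail. The constant $\delta_{\min}$ is defined only after $r$ is fixed (it is the uniform thickness of $W$ around the compact set $\Gamma_q\cap\{|\alpha|=r\}$), yet you then ``shrink $r$ so that $(C'+C'')r^2<\delta_{\min}/2$'' as though $\delta_{\min}$ did not depend on $r$. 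There is no reason any $r$ satisfies this: $W$ is only assumed to be a neighborhood of $\Gamma_q\cap U_0$, and $\Gamma_q$ contains neither $q$ nor any point of $L$, so $W$ is allowed to pinch onto the graph of $G$ arbitrarily fast as $|\alpha|\to 0$. For instance a neighborhood with $\{0<|\alpha|,\ |\beta-G(\alpha)|<e^{-1/|\alpha|}\}$ as its portion near $L$ is admissible, and then $\delta_{\min}(r)\le e^{-1/r}\ll r^2$, so your requirement fails for every $r$. The underlying problem is that your only control on where a backward orbit launched from $W'$ crosses the sphere $\{|\alpha|=r\}$ is the crude $O(r^2)$ drift bound, which does not improve as the starting point tends to $q$ (the last leg of the traverse always contributes $O(r^2)$). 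What is actually needed is that the exit point through a fixed transversal converges to the compact set $\Gamma_q\cap\{|\alpha|=r\}$ as the starting point tends to $q$ --- continuity of the flow-out ``at infinite time.'' This is precisely what the paper's proof supplies: after blowing up $L$ the vector field becomes $rV_\perp$ with $V_\perp$ transverse to the front face, so the passage from a neighborhood of $B^{-1}(q)$ to a fixed transversal is a finite-time flow-box, the exit map is continuous up to the front face, and a pinching $W$ is accommodated by shrinking the front-face neighborhood $U'$ rather than the transversal.

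A secondary point: your use of the ``compact smooth sphere'' $\Gamma_q\cap\{|\alpha|=r\}$ presupposes that near $q$ the set $\Gamma_q\cup\{q\}$ is a graph $\beta=G(\alpha)$ over the \emph{full} ball, i.e.\ that every direction $\alpha_*/|\alpha_*|$ carries a point flowing exactly to $q$. Your forward-integration estimate only shows $\Gamma_q$ is \emph{contained} in a quadratic cusp about the $\alpha$-plane, and since $q$ is not a hyperbolic zero of $\kappa_*W_p$ (all of $L$ consists of zeros), the ``strong stable manifold of $q$'' is not given by the basic stable manifold theorem but by the strong stable fibration over the normally attracting critical manifold $L$ --- machinery the paper avoids, since in the blown-up picture this fibration appears for free as the flow-out of the front face. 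If you repair the main gap by invoking continuity of that fibration, this second point is subsumed; as written, both inputs are asserted rather than proved, and the first is where the argument actually breaks.
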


\begin{proof}
We first shrink $U_0$ so that $U_0 = \kappa(U)$ with $U$ as in Lemma~\ref{lem:coordinates}. Using the coordinates given by Lemma~\ref{lem:coordinates}, we have a coordinate chart $$\psi: U_0 \cap \hS \rightarrow V_{\hS} \subseteq \RR^{n-1}_\alpha \times \RR^{n-1}_\beta.$$
Here $$\psi_{*} (\kappa_*W_p)_{\hS} = \sum_i \lambda(\alpha, \beta) (\alpha_i + w_i(\alpha, \beta))\partial_{\alpha_i} + r_i(\alpha, \beta)\partial_{\beta_i}$$
where $w_i, r_i \in \mathcal I_{L, U_0}$ (where we define, analogously, $\mathcal I_{L, U_0} = \{f \in C^\infty(\hS \cap U_0) \: | \  f|_L = 0\}$). To analyze this, we introduce a blow up of $L \cap U_0$ with blowdown map $$B: [U_0 \cap \hS ; U_0 \cap L] \rightarrow U_0 \cap \hS.$$
This can easily be described in terms of coordinates: $[U_0 \cap \hS ; U_0 \cap L]$ is diffeomorphic to a neighborhood of $\{r = 0, \beta = 0\}$ in $\RR_{+,r} \times \mathbb S^{n-2}_\omega \times \RR^{n-1}_\beta$. In these coordinates and the coordinates $(\alpha, \beta)$ for $\hS \cap U_0$, $B$ is the map $(r, \omega, \beta) \mapsto (r \omega, \beta)$. We then have $r$ as a boundary defining function for $B^{-1}(L)$.

\begin{figure}
\center
\def\svgwidth{\columnwidth}
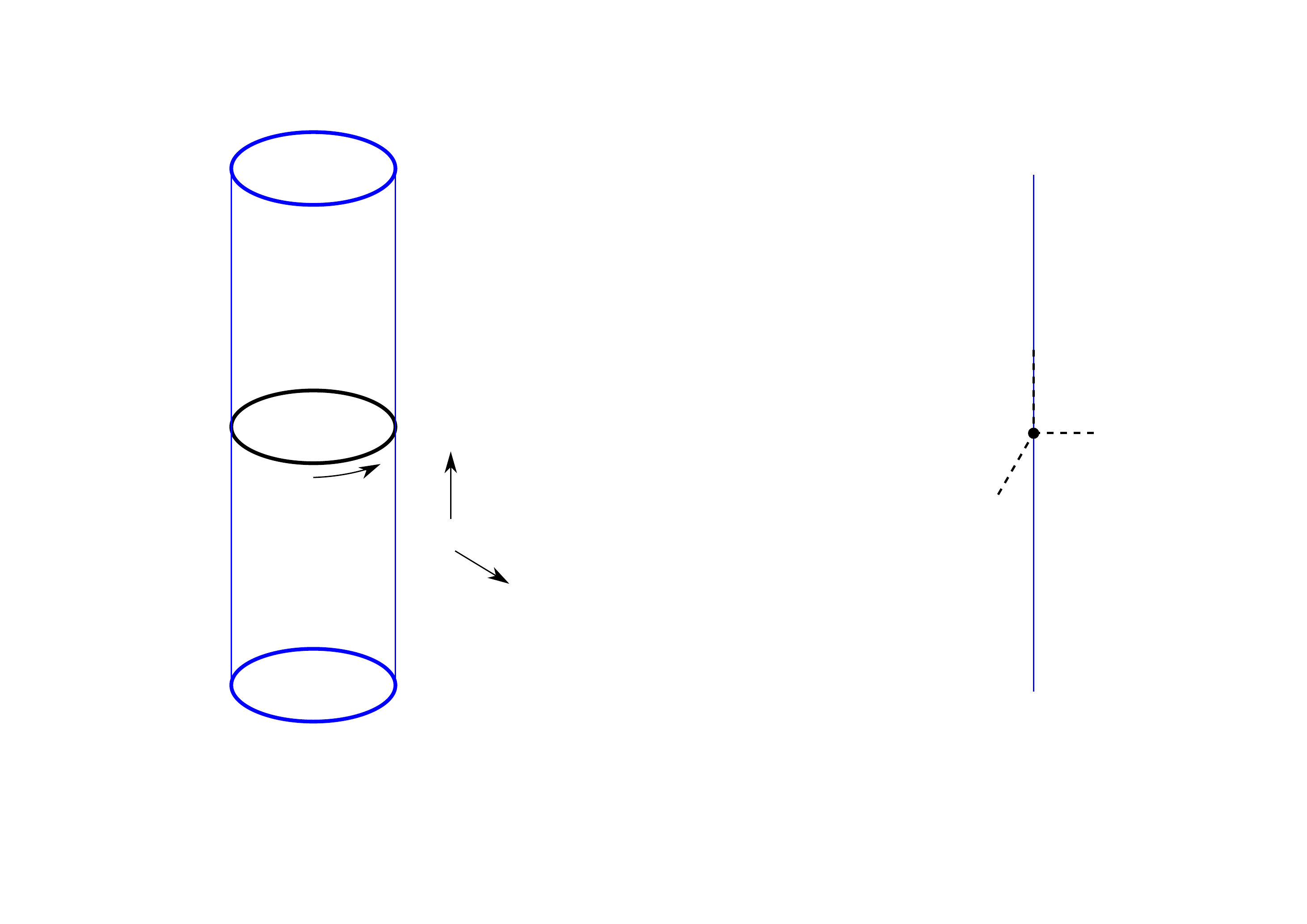
\caption{The blow up construction}
\label{fig:blowup}
\end{figure}

$\kappa_*W_p|_{\hS}$ then lifts uniquely to a vector field on $[U_0 \cap \hS ; U_0 \cap L]$, and in these coordinates, it is of the form
$$(\lambda(r, \omega, \beta) r + w(r, \omega, \beta))\partial_r + w_i(r, \omega, \beta)\partial_{\omega_i} + r_i(r, \omega, \beta)\partial_{\alpha_i}$$
where $w, w_i, r_i \in \mathcal I_L^2$. This is of the form $$r V_\perp$$ where $V_\perp$ is transverse to $B^{-1}(L)$. The lifts of the integral curves of $\kappa_*W_p|_{\hS}$ are the same as the flow lines of $V_\perp$ away from $B^{-1}(L)$, so to prove the lemma we may simply study the latter. 

\begin{figure}
\center
\def\svgwidth{\columnwidth}
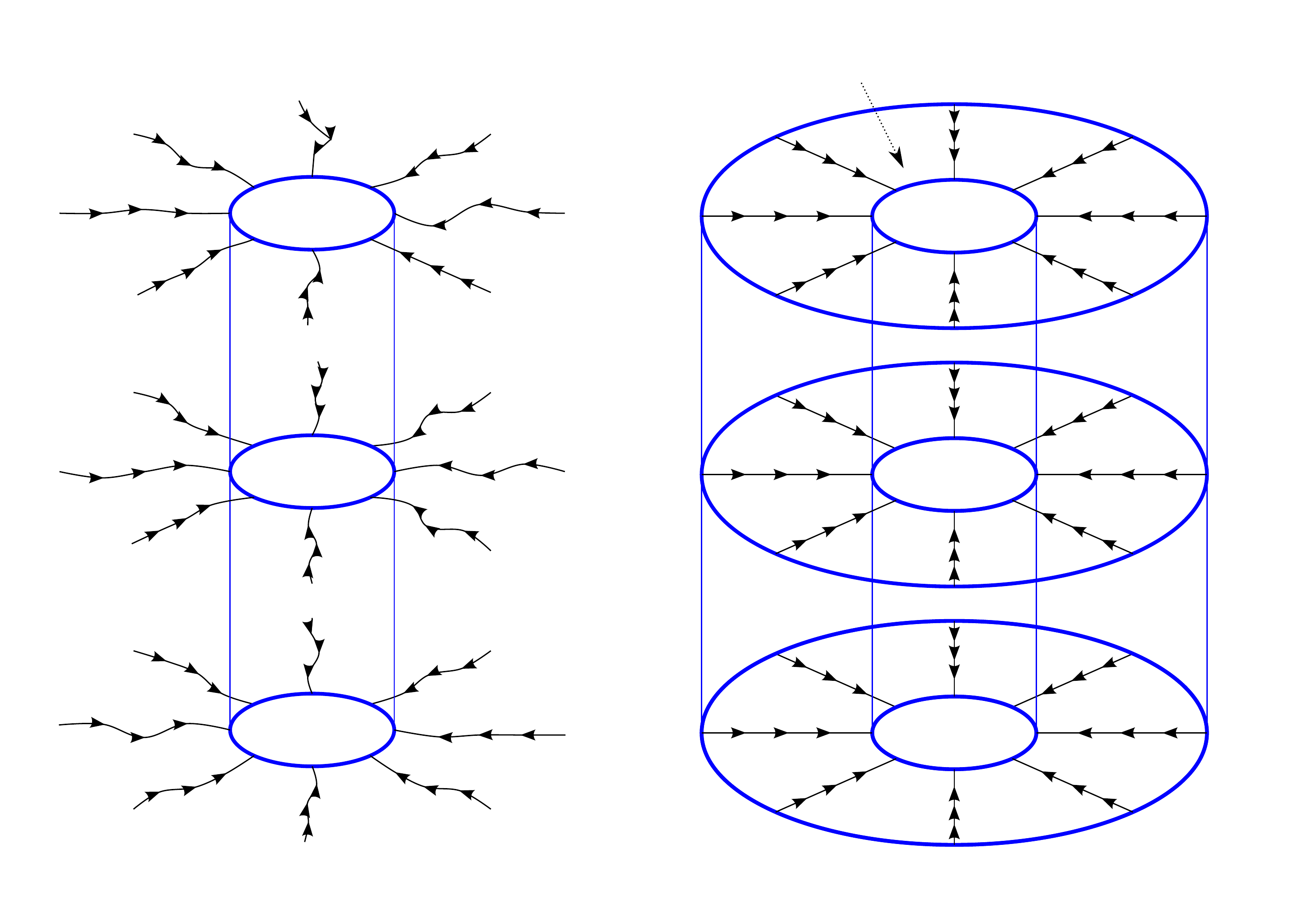
\caption{The map $\psi$}
\label{fig:straightenedblowup}
\end{figure}

As is standard ODEs (see, for instance, Chapter 1 of \cite{taylor}), the flow of $V_\perp$ gives a diffeomorphism $$\varphi: W'' \subset [U_0 \cap \hS ; U_0 \cap L] \rightarrow B^{-1}(L) \times [0, 1]$$ which extends  the `identity' $B^{-1}(L) \rightarrow B^{-1}(L) \times \{0\}$ (and $W''$ is an open neighborhood of $B^{-1}(L)$). 

The set $W$ in the assumption of the lemma gives an open set $U' \subset B^{-1}(L)$ of $B^{-1}(q)$ such that $$U' \times \{1\} \subset \varphi(B^{-1}(W)).$$ We can then take $W' = B(\varphi^{-1}(U' \times [0, 1)))$.
\end{proof}

\begin{cor} \label{cor:geometriccor}
If $\WF^s(u) \cap U_0 \cap \Gamma_q = \emptyset$ for some open neighborhood $U_0 \subset S^*X$ of $q$ with $\WF^{s - m + 1}(Pu) \cap U_0 = \emptyset$, then $\WF^s(u) \cap (W' \backslash  L) = \emptyset$ for some open neighborhood $W' \subset \hS$ of $q$.
\end{cor}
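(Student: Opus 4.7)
The plan is to combine Lemma~\ref{lem:geometriclemma} with the standard propagation of singularities result (Theorem~\ref{thm:hormander}). The corollary is essentially a packaging of these two facts.

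First, I would observe that since $\WF^s(u)$ is a closed subset of $S^*X$, the set
\[
W := (U_0 \cap \hat\Sigma) \setminus \WF^s(u)
\]
is relatively open in $\hat\Sigma$. By hypothesis $\Gamma_q \cap U_0 \cap \WF^s(u) = \emptyset$, and since integral curves of $\kappa_*W_p$ lie in $\hat\Sigma$ (as $p$ is constant along bicharacteristics), $\Gamma_q \subset \hat\Sigma$. Thus $W$ is an open neighborhood of $\Gamma_q \cap U_0$ inside $\hat\Sigma$, which is exactly the input required by Lemma~\ref{lem:geometriclemma}.

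Next, I would apply Lemma~\ref{lem:geometriclemma} to this $W$ to produce an open neighborhood $W' \subset \hat\Sigma$ of $q$ such that every point $w' \in W' \setminus L$ is of the form $\exp(t\kappa_*W_p)w$ for some $w \in W$ and $t \geq 0$ (or $t \leq 0$, in the source case). Because the construction in the proof of the lemma takes place within $U_0 \cap \hat\Sigma$ (the blowup is of $U_0 \cap L$ inside $U_0 \cap \hat\Sigma$, and the straightening diffeomorphism $\varphi$ stays in this set), the entire bicharacteristic segment from $w$ to $w'$ lies in $U_0 \cap \hat\Sigma$. One may shrink $W'$ if needed to ensure $W' \subset U_0$.

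Finally, since $\WF^{s-m+1}(Pu) \cap U_0 = \emptyset$, Theorem~\ref{thm:hormander} says that $\WF^s(u)\setminus \WF^{s-m+1}(Pu)$ is a union of maximally extended bicharacteristics inside $\Sigma(P) \setminus \WF^{s-m+1}(Pu)$. The bicharacteristic segment from $w$ to $w'$ constructed above lies entirely in $U_0$ and hence in $\hat\Sigma \setminus \WF^{s-m+1}(Pu)$; as $w \notin \WF^s(u)$, propagation along this segment forces $w' \notin \WF^s(u)$. This holds for every $w' \in W' \setminus L$, so $\WF^s(u) \cap (W' \setminus L) = \emptyset$, as claimed.

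There is no real obstacle here: the only point requiring a little care is verifying that the bicharacteristic segments produced by Lemma~\ref{lem:geometriclemma} stay inside $U_0$ so that the $\WF^{s-m+1}(Pu)$ hypothesis applies along them, but this is immediate from the blowup/straightening construction in the proof of that lemma.
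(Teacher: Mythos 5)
Your proof is correct and follows essentially the same route as the paper: use closedness of $\WF^s(u)$ to produce the neighborhood $W$ of $\Gamma_q \cap U_0$, apply Lemma~\ref{lem:geometriclemma} to get $W'$, and then propagate regularity along the resulting bicharacteristic segments via Theorem~\ref{thm:hormander}. Your extra remark that the segments stay inside $U_0$ (so the $\WF^{s-m+1}(Pu)$ hypothesis applies along them) is a point the paper leaves implicit, but it is the same argument.
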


\begin{proof}
Since $WF^s(u)$ is a closed set, $U_0 \backslash \WF^s(u)$ is such a $W$ as in the statement of Lemma~\ref{lem:geometriclemma}. The result then follows from Lemma~\ref{lem:geometriclemma} and Theorem~\ref{thm:hormander}.
\end{proof}

\begin{cor} \label{cor:geometriccortau}
If $L$ is a sink for $\kappa_* W_p$ and $\WF_{L^\infty([0, 1])}(u_\tau) \cap U_0 \cap \Gamma_q = \emptyset$ for some open neighborhood $U_0 \subset S^*X$ of $q$ with $\WF^{s - m + 1}_{L^\infty([0, 1])}((P - i Q_\tau)u_\tau) \cap U_0 = \emptyset$, then $\WF^s_{L^\infty([0, 1])}(u_\tau) \cap (W' \backslash L) = \emptyset$ for some open neighborhood $W' \subset \hS$ of $q$.
\end{cor}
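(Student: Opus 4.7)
The plan is to run the same argument as in the proof of Corollary~\ref{cor:geometriccor}, with two substantive changes: replace Theorem~\ref{thm:hormander} by Theorem~\ref{thm:melrosetau}, and use the hypothesis that $L$ is a sink to align the direction of propagation with what Theorem~\ref{thm:melrosetau} provides. Theorem~\ref{thm:melrosetau} only propagates regularity forward along bicharacteristics (equivalently, it gives backward-extension of the wavefront set), so the sink assumption is precisely what is needed to ensure that bicharacteristics approaching $q$ come from a region where regularity is known.

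First I would set $W = (U_0 \cap \hS) \setminus \WF^s_{L^\infty([0,1])}(u_\tau)$. Since $\WF^s_{L^\infty([0,1])}(u_\tau)$ is closed and is disjoint from $U_0 \cap \Gamma_q$ by hypothesis, $W$ is an open neighborhood in $\hS$ of $\Gamma_q \cap U_0$. Applying Lemma~\ref{lem:geometriclemma} in the sink case then produces an open neighborhood $W' \subset \hS$ of $q$ such that every $w' \in W' \setminus L$ can be written $w' = \exp(t\, \kappa_* W_p) w$ for some $w \in W$ and $t \geq 0$. Because $\kappa_* W_p$ is a positive rescaling of $H_p$, this says that the bicharacteristic through $w'$, traced backward, reaches $w$.

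To conclude $w' \notin \WF^s_{L^\infty([0,1])}(u_\tau)$, I would argue by contradiction. Suppose $w'$ were in $\WF^s_{L^\infty([0,1])}(u_\tau)$. Since $w' \in U_0$ and $U_0$ is disjoint from $\WF^{s-m+1}_{L^\infty([0,1])}((P-iQ_\tau)u_\tau)$, we would have
\[ w' \in \WF^s_{L^\infty([0,1])}(u_\tau) \setminus \WF^{s-m+1}_{L^\infty([0,1])}((P-iQ_\tau)u_\tau). \]
By Theorem~\ref{thm:melrosetau}, the maximal backward-extension of the bicharacteristic through $w'$ inside $\Sigma(P) \setminus \WF^{s-m+1}_{L^\infty([0,1])}((P-iQ_\tau)u_\tau)$ must lie entirely inside $\WF^s_{L^\infty([0,1])}(u_\tau)$. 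Following this extension backward from $w'$ hits $w \in W$, which by construction is \emph{not} in $\WF^s_{L^\infty([0,1])}(u_\tau)$ — a contradiction.

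The main subtlety I expect to have to handle carefully is ensuring that the segment of bicharacteristic from $w'$ backward to $w$ stays inside $U_0$, so that $\WF^{s-m+1}_{L^\infty([0,1])}((P-iQ_\tau)u_\tau)$ is genuinely avoided on the whole segment where Theorem~\ref{thm:melrosetau} is invoked. This is precisely where the blow-up picture from the proof of Lemma~\ref{lem:geometriclemma} is essential: after possibly shrinking $U_0$ (and hence $W$ and $W'$), the lifted flow lines of $V_\perp$ connecting $\varphi^{-1}(U' \times [0,1))$ to $U' \times \{1\}$ sit inside $W'' \subset [U_0 \cap \hS; U_0 \cap L]$, and their images under the blowdown $B$ remain inside $U_0 \cap \hS$. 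Granted this containment, the rest is a direct transcription of the proof of Corollary~\ref{cor:geometriccor}, with the sink hypothesis supplying the only genuinely new ingredient.
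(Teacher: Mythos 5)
Your proposal is correct and follows the paper's own route: take $W = (U_0\cap\hS)\setminus\WF^s_{L^\infty([0,1])}(u_\tau)$, apply Lemma~\ref{lem:geometriclemma} in the sink case, and then invoke Theorem~\ref{thm:melrosetau} along the (forward) flow segments from $W$ into $W'\setminus L$, the sink hypothesis matching the one-directional (forward) propagation of regularity in that theorem. The paper's proof is exactly this adaptation of Corollary~\ref{cor:geometriccor}, so no further comment is needed.
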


\begin{proof}
This follows in the same way as the above corollary, this time applying Lemma~\ref{lem:geometriclemma} and Theorem~\ref{thm:melrosetau}.
\end{proof}

\section{Commutator Argument} \label{sect:commutator}

In this section, we state the operators which we will construct in Section~\ref{sect:operatorconstruction}, and then assuming their construction, prove Theorem~\ref{thm:general}. First, we need a general lemma regarding families of pseudodifferential operators. This will help when regularizing.

\begin{lem} \label{lem:regularize}
If $A_t \in L^\infty([0, 1]_t, \Psi^r(X))$ for any $r \in \RR$, with $A_t \rightarrow A_0$ in the topology of $\Psi^{r + \delta}(X)$ for some $\delta > 0$, then $A_t \rightarrow A_0$ in the strong operator topology of operators $H^{s}(X) \rightarrow H^{s - r}(X)$, for all $s \in \RR$, for any density choice for $X$.
\end{lem}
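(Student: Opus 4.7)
The plan is the standard ``approximation plus uniform bound'' argument. Fix $s \in \RR$ and $u \in H^s(X)$; I want $\|A_t u - A_0 u\|_{H^{s-r}} \to 0$ as $t \to 0$. There are two analytic inputs. The first is that the standard Sobolev continuity of pseudodifferential operators, combined with the $L^\infty$ hypothesis, gives a uniform bound
\[ \|A_t\|_{H^s \to H^{s-r}} \le C, \qquad t \in [0,1]. \]
The second is that convergence $A_t \to A_0$ in $\Psi^{r+\delta}(X)$ yields, for every $s' \in \RR$,
\[ \|A_t - A_0\|_{H^{s'} \to H^{s' - r - \delta}} \longrightarrow 0; \]
specializing to $s' = s + \delta$ gives $\|A_t - A_0\|_{H^{s+\delta} \to H^{s-r}} \to 0$, that is, convergence in the operator norm between these two spaces.

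Given $\epsilon > 0$, I would use density of $H^{s+\delta}(X)$ in $H^s(X)$ (via $C_c^\infty(X)$ or Friedrichs mollifiers) to write $u = v_\epsilon + w_\epsilon$ with $v_\epsilon \in H^{s+\delta}(X)$ and $\|w_\epsilon\|_{H^s} < \epsilon$. Splitting,
\[ \|(A_t - A_0) u\|_{H^{s-r}} \le \|(A_t - A_0) v_\epsilon\|_{H^{s-r}} + \|(A_t - A_0) w_\epsilon\|_{H^{s-r}}, \]
the first term tends to zero as $t \to 0$ by the second input, while the second is bounded by $(C + \|A_0\|_{H^s \to H^{s-r}})\epsilon$ by the first. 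Hence $\limsup_{t \to 0} \|(A_t - A_0)u\|_{H^{s-r}} \le C'\epsilon$; since $\epsilon$ is arbitrary, $A_t u \to A_0 u$ in $H^{s-r}$, which is the claimed strong operator convergence.

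The only subtlety, and where I would be slightly careful, concerns global versus local Sobolev mapping properties on a general (possibly noncompact) manifold $X$. The standard Sobolev continuity used above is global when $X$ is compact but must be localized via properly supported representatives in general. In the noncompact case, I would first cut off by a partition of unity to reduce to compactly supported $u$, reading the Sobolev norms as $H^s_{\mathrm{loc}}$ and $H^s_{\mathrm{comp}}$ in the usual paired sense; the functional-analytic content of the argument is unaffected. I do not anticipate any hidden difficulty: the lemma is essentially a packaging statement that one extra order of regularity in the convergence of the symbols is enough to upgrade weak-type convergence to strong convergence between the natural Sobolev spaces.
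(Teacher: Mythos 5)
Your proposal is correct and follows essentially the same route as the paper's proof: convergence in $\Psi^{r+\delta}(X)$ gives convergence on the dense subspace $H^{s+\delta}(X)$, while the $L^\infty$ hypothesis gives the uniform bound $H^s \to H^{s-r}$, and the density argument finishes. You merely spell out the $\epsilon$-splitting (and the local/compact-support caveat) that the paper leaves implicit.
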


\begin{proof}
If $v \in H^{s + \delta}(X)$, then given the continuity assumption  $A_t \rightarrow A_0$ and the fact that the standard map $\Psi^{r + \delta}(X) \rightarrow \mathcal L(H^{s + \delta}(X), H^{s - r}(X))$ is continuous, we have that $A_t v \rightarrow A_0 v$ in the topology of $H^{s - r}$. The assumption $A_t \in L^\infty([0, 1]_t, \Psi^r(X))$ implies that, if $u \in H^s(X)$, then $A_t u$ is bounded in $H^{s - r}(X)$. As $H^{s + \delta}(X)$ is dense in $H^s(X)$, $A_t u \rightarrow A_0 u$ in $H^{s - r}(X)$. 
\end{proof}

\subsection{$s < s_0$ case}

\begin{lem} \label{lem:s0operators}
Given an open neighborhood $U_0 \subset S^*X$ of $q$, there exist
\begin{align*}
 	B = (B_t)_{t \in [0, 1]} & \in L^\infty([0,1]_t, \Psi^{\frac{2s + m - 1}{2}}(X)), \\
	G_1 = (G_{1, t})_{t \in [0, 1]}, G_2 = (G_{2, t})_{t \in [0, 1]} & \in L^\infty([0,1]_t, \Psi^{s}(X)), \\
	E = (E_t)_{t \in [0, 1]} & \in L^\infty([0, 1]_t, \Psi^{2s}(X)), \\ 
	F = (F_t)_{t \in [0, 1]} & \in L^\infty([0,1]_t, \Psi^{2s - 1}(X)), \\
	H = (H_t)_{t \in [0, 1]} & \in L^\infty([0, 1]_t, \Psi^{s - m + 1}(X)), \\
	J = (J_t)_{t \in [0, 1]} & \in L^\infty([0, 1]_t, \Psi^{2s - m}(X)),
\end{align*}
such that
\begin{align*}
\frac{B_t^2 P - P^* B_t^2}{2i} & = \sgn(\lambda) (G_{1, t}^* G_{1,t} + G_{2, t}^*G_{2, t}) + E_t + F_t \\
B_t^2 & = G_{2, t} H_t + J_t
\end{align*}
and
\begin{enumerate}
	\item all operators are in $\Psi^{-\infty}(X)$ for $t > 0$,
	\item $B_t, G_{j, t}$ are continuous in the topologies of $\Psi^{\frac{2s - m + 1}{2} + \delta}(X), \Psi^{s + \delta}(X)$, respectively, for all $\delta > 0$,
	\item all operators have $\WF'_{L^\infty([0, 1])}$ contained in $U_0$,
	\item $\WF'_{L^\infty([0,1])}(E_t) \cap L = \emptyset$,
	\item $B_t^* = B_t$ (assuming a choice of density for $X$),
	\item $q \in \mathrm{Ell}(G_{2, 0})$.
\end{enumerate}
\end{lem}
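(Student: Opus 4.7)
The plan is to construct all six families by explicit symbol quantization in the coordinates of Lemma~\ref{lem:coordinates} and verify each of the six properties. First shrink the conic open set $U\subseteq\kappa^{-1}(U_0)$ from Lemma~\ref{lem:coordinates} so that $\lambda$ is elliptic throughout and $\sup_U f < s_0 - (m-1)/2$, which is possible by definition of $s_0$. Fix $\hat\chi\in C_c^\infty(\RR)$ nonnegative, even, identically $1$ near $0$, and nonincreasing on $[0,\infty)$, which ensures $y\hat\chi(y)\hat\chi'(y)\le 0$ everywhere. Choose a nonnegative smooth cutoff $\chi$ on $U$ with $\chi=1$ near $\kappa^{-1}(q)$ and compact angular support in $U$, arranged so that $\supp d\chi$ is disjoint from $L$ (this is the critical subtlety, discussed in the last paragraph). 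Set
$$b_t = \chi\,\zeta^{(2s-m+1)/2}\,\hat\chi(t\zeta),$$
a real symbol of order $(2s-m+1)/2$ uniformly in $t$ and of order $-\infty$ for $t>0$; let $B_t$ be its Weyl quantization, so that $B_t^*=B_t$.

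A direct computation using $H_p\zeta = -\lambda$ together with $\sigma_{m-1}((P-P^*)/(2i))=\lambda f/\zeta$ gives the principal symbol of $(B_t^2 P - P^*B_t^2)/(2i)$ as
$$\chi^2\lambda\zeta^{2s-m}\left[\bigl(f-\tfrac{2s-m+1}{2}\bigr)\hat\chi^2(t\zeta) - t\zeta\,\hat\chi(t\zeta)\hat\chi'(t\zeta)\right] + \chi\rho_t^2\,H_p\chi,$$
where $\rho_t(\zeta)=\zeta^{(2s-m+1)/2}\hat\chi(t\zeta)$. By the choice of $U$ the factor $f-(2s-m+1)/2$ is strictly positive on $\supp \chi$, and by the choice of $\hat\chi$ the factor $-t\zeta\hat\chi(t\zeta)\hat\chi'(t\zeta)$ is nonnegative, both uniformly in $t$. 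Define
$$g_{2,t}^2 = \chi^2|\lambda|\zeta^{2s-m}\bigl(f-\tfrac{2s-m+1}{2}\bigr)\hat\chi^2(t\zeta), \quad g_{1,t}^2 = -\chi^2|\lambda|\zeta^{2s-m+1}\,t\,\hat\chi(t\zeta)\hat\chi'(t\zeta),$$
both nonnegative smooth symbols of order $2s$; take smooth nonnegative square roots $g_{j,t}$ of order $s$, and note that $g_{2,0}$ is elliptic at $q$ since $\chi(q)=1$, $|\lambda(q)|>0$, and $f(q)-(2s-m+1)/2>0$. Let $e_t = \chi\rho_t^2 H_p\chi$, quantize $g_{j,t}$ and $e_t$ to $G_{j,t}, E_t$, and define
$$F_t = \tfrac{1}{2i}(B_t^2 P - P^* B_t^2) - \sgn(\lambda)(G_{1,t}^* G_{1,t} + G_{2,t}^* G_{2,t}) - E_t;$$
by construction $F_t$'s order-$2s$ principal symbol vanishes, so $F_t\in L^\infty([0,1],\Psi^{2s-1}(X))$.

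For the factorization $B_t^2 = G_{2,t}H_t + J_t$, ellipticity of $g_{2,0}$ on $\mathrm{esssup}(b_0)$ permits the standard symbolic parametrix construction to produce $h_t\in L^\infty([0,1],S^{s-m+1}(T^*X))$ with essential support contained in $\mathrm{esssup}(b_0)$ and $g_{2,t}h_t - b_t^2\in S^{2s-m}(T^*X)$ uniformly in $t$; quantize to $H_t$ and set $J_t = B_t^2 - G_{2,t}H_t$. Properties (1), (3), (5), (6) are then immediate from the explicit form of the symbols, and (2) follows from $\hat\chi(t\zeta)\to 1$ in the topology of $S^\delta(T^*X)$ as $t\to 0$ for any $\delta>0$, since all $t$-dependence of the symbols enters through this factor.

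The main obstacle is property (4). A naive tensor-product cutoff $\chi=\chi_\alpha(\alpha)\chi_{\eta_0}(\eta_0)\chi_\beta(\beta)$ yields an $H_p\chi$ whose $\chi_\beta'(\beta)H_p\beta$ component vanishes only to first order on $\Lambda$ (since $H_p\beta\in\IL^2$ on $\Sigma$ and is $O(\eta_0)$ off $\Sigma$), which fails to give order $-\infty$ decay in a conic neighborhood of $L$. The resolution is to construct $\chi$ in the compactified picture $\tilde U\subset\overline{T^*X}$ of Section~\ref{sect:compactified}: pick open nested sets $N_1\subset\overline{N_1}\subset N_2\subset\overline{N_2}\subset\tilde U$ with $L\cap\overline{N_2}$ a compact submanifold contained in $N_1$, and take $\chi$ smooth on $\tilde U$, equal to $1$ on $N_1$, and supported in $N_2$. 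Then $\supp d\chi\subseteq\overline{N_2}\setminus N_1$ is disjoint from $L$ in $\overline{T^*X}$; pulled back to $U$, the support of $e_t$ meets no ray $\kappa^{-1}(q_0)$ with $q_0\in L$, giving $\mathrm{esssup}(e_t)\cap L = \emptyset$ and hence $\WF'_{L^\infty([0,1])}(E_t)\cap L = \emptyset$.
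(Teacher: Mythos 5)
Your symbol computation for the weight term and the thresholds is fine, but the proposal founders exactly at the point you yourself flag as the critical subtlety: a cutoff $\chi$, homogeneous of degree $0$, identically $1$ near $\kappa^{-1}(q)$ and with $\kappa(\supp\chi)$ contained in the given small neighborhood $U_0$, can never have $\supp d\chi$ disjoint from $L$. The submanifold $L$ passes through $q$ and is not compactly contained in $U_0$ (it exits every small neighborhood of $q$ along the $\beta$-directions), so along a path in $L\cap U_0$ from $q$ to the complement of $\supp \chi$ the function $\chi$ must drop from $1$ to $0$, forcing $d\chi\neq 0$ at some point of $L$. For the same reason the nested sets you posit in the compactified picture, with $L\cap\overline{N_2}\subset N_1$ and $\overline{N_2}\subset\tilde U$, do not exist: $L$ meets $\partial N_2$, and $\partial N_2\cap N_1=\emptyset$. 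Consequently your $e_t=\chi\rho_t^2H_p\chi$ cannot have essential support disjoint from $L$ (it vanishes on $\Lambda$ itself, since $H_p$ is radial there and $\chi$ is homogeneous of degree $0$, but only to finite order transversally where $d\chi$ meets $L$), so property (4) fails for your $E_t$, and with it the whole point of the lemma: in the commutator argument the $E_t$ term is only controllable because its microsupport avoids $L$.

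This is precisely the difficulty the paper's construction is designed to handle, and it cannot be cut off away --- it has to be absorbed with a sign. The paper factors the cutoff as $\tilde\chi(\eta_0^2)\,\tilde\chi(\eta_1)\,\tilde\chi(\eta_2)$ with $\eta_2=|\alpha|^2$ and, crucially, $\eta_1=|\beta|^2+C|\alpha|^2$ with $C<0$ chosen large in absolute value so that, using $\iota^*H_p\beta_i\in\IL^2$ and $\iota^*H_p\alpha_i\in\frac{\lambda}{\zeta}\alpha_i+\IL^2$, the derivative $H_p\eta_1$ has a definite sign (opposite to $\lambda$) on the support for $T$ small. The unavoidable $\beta$-direction cutoff derivative, which does meet $L$, then appears with a favorable sign and is absorbed as $g_{1,t}^2$ (this is what $G_{1,t}$ is for), while only the derivatives of $\tilde\chi(\eta_2)$ (supported in $|\alpha|^2\geq\epsilon$, hence off $L$) and of $\tilde\chi(\eta_0^2)$ (off $\Sigma$) go into $E_t$; your $G_{1,t}$, by contrast, comes only from the regularizer factor and absorbs nothing of the cutoff. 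Two smaller points: the inequality you impose on $U$ should read $\inf f>s-\frac{m-1}{2}$ on the support for $\zeta>\zeta_0$ (your ``$\sup_U f<s_0-\frac{m-1}{2}$'' is the wrong direction and not what the definition of $s_0$ gives), and you need the low-frequency cutoff $\hat\rho(\zeta)$ both because the threshold inequality holds only for $\zeta>\zeta_0$ and because homogeneous symbols are not smooth at the zero section; also $g_{2,0}$ is not elliptic on all of $\mathrm{esssup}(b_0)$ at the edge of the cutoff, so the factorization should be done as the explicit quotient $h_t=b_t^2/g_{2,t}$ (which is smooth since $b_t^2$ carries $\chi^2$ and $g_{2,t}$ carries $\chi$) rather than by a parametrix argument.
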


\begin{rmk}
More is true: we can actually take $F_t, J_t \in L^\infty([0, 1]_t, \Psi^{-\infty})$. This is not needed in this proof of Theorem~\ref{thm:general}, but we prove an analogue in Section~\ref{sect:tausource} which carries over.
\end{rmk}

For now, we assume this lemma and proceed to prove the $s < s_0$ case of Theorem~\ref{thm:general}. 

\begin{proof} ($s < s_0$ case of Theorem~\ref{thm:general})

We may assume, by shrinking $U_0$ if necessary, the following:
\begin{itemize}
	\item $\WF^{s - \frac{1}{2}}(u) \cap U_0 = \emptyset$, as $q \notin \WF^{s'}(X)$ for some $s'$ and we can inductively improve regularity by $\frac{1}{2}$, each time making $U_0$ smaller.
	\item $\WF^s(u) \cap \hS \cap (U_0 \backslash L) = \emptyset$, by Corollary~\ref{cor:geometriccor}
	\item $\WF^{s - m + 1}(Pu) \cap U_0= \emptyset$.
\end{itemize}

As in the sketch of the proof, we begin by choosing a density for $X$, which gives us distributional pairings. In order to avoid some complications with pairings, we if necessary modify the constructed operators to have compactly supported Schwartz kernels. For $t > 0$, the following pairings are well-defined, and equality holds:
\begin{align*}
\frac{1}{2i}\lrangle{u, (B_t^2 P - P^* B^2_t) u} = & \sgn(\lambda) (\lrangle{u,G_{1, t}^* G_{1,t} u} + \lrangle{u, G_{2, t}^* G_{2, t} u}) \\ 
& + \lrangle{u, E_t u} + \lrangle{u, F_t u}.
\end{align*}
We have $\lrangle{u, G_{j, t}^2 u} = \|G_{j, t} u\|^2$, and on the left-hand side, 
\begin{align*}
	|\frac{1}{2i}\lrangle{u, B_t^2 P - P^* B_t^2 u}| & = |\frac{1}{2i}\bigr(\lrangle{u, B_t^2 P u} - \lrangle{B_t^2 P u, u})| \\
	& = |\mathrm{Im} \lrangle{u, B_t^2 P u}| \\
	& = |\mathrm{Im} (\lrangle{u, G_{2,t} H_t P u} + \lrangle{u, J_t P u})| \\
	& = |\mathrm{Im} (\lrangle{G_{2,t} u, H_t P u} + \lrangle{u, J_t P u})| \\
	& \leq |Im \lrangle{u, J_t Pu}| +  \|G_{2,t} u\|\|H_t P u\| \\
	& \leq |Im \lrangle{u, J_t Pu}| + \frac{c}{2}\|G_{2,t} u\|^2 + \frac{1}{2c}\|H_tPu\|^2
\end{align*}
for any $c > 0$, which we choose to be $<2$. We then have
\begin{align*}
\|G_{1, t}\|^2 + (1 - \frac{c}{2})\|G_{2, t}u\|^2 & \leq \frac{1}{2c}\|H_tPu\|^2 + |\mathrm{Im}\lrangle{ u, J_t Pu}| \\
& \quad + |\lrangle{u, E_t u}| + |\lrangle{u, F_t u}|
\end{align*} 
By the assumed regularity of $Pu$, $\|H_t Pu\|$ and $\lrangle{u, J_t Pu}$ remain bounded as $t \rightarrow 0$. Since $\WF'_{L^\infty([0,1])}(E_t) \cap \WF^s(u) = \emptyset$ (away from $\hS$, too, by elliptic regularity), $\lrangle{u, E_t u}$ remains bounded as $t \rightarrow 0$. Lastly, by assumption on the regularity of $u$ in $\kappa(U)$, $\lrangle{u, F_t u}$ remains bounded. Thus $G_{1,t} u$ and $G_{2, t} u$ remain bounded in $L^2(X)$. By Banach-Alaoglu, $G_{2, t} u$ has a weakly convergent sequence $G_{2, t_n} u$ in $L^2(X)$. On the other hand, by the continuity assumption on $G_{2, t}$, $G_{2, t} u \rightarrow G_{j, 0} u$ in the sense of distributions. Thus $G_{2, t_n} u \rightarrow G_{2, 0} u$ in $L^2(X)$, so $G_{2, 0} u \in L^2(X)$. Thus $\mathrm{Ell}(G_{2, 0}) \cap \WF^s(u) = \emptyset$, so $q \notin \WF^s(u)$.
\end{proof}

\subsection{$s > s_1$ case}

\begin{lem} \label{lem:s1operators}
Given an open neighborhood $U_0 \subset S^*X$ of $q$, there exist 
\begin{align*}
B = (B_t)_{t \in [0, 1]} & \in L^\infty([0,1]_t, \Psi^{\frac{2s + m - 1}{2}}(X)), \\
G_1 = (G_{1, t})_{t \in [0, 1]}, G_2 = (G_{2, t})_{t \in [0, 1]} & \in L^\infty([0,1]_t, \Psi^{s}(X)), \\
E = (E_t)_{t \in [0, 1]} & \in L^\infty([0, 1]_t, \Psi^{2s}(X)), \\
F = (F_t)_{t \in [0, 1]} & \in L^\infty([0,1]_t, \Psi^{2s - 1}(X)), \\
H = (H_t)_{t \in [0, 1]} & \in L^\infty([0,1]_t, \Psi^{s - m + 1}(X)), \\
J = (J_t)_{t \in [0, 1]} & \in L^\infty([0,1]_t, \Psi^{2s - m}(X),
\end{align*}
such that
\begin{align*}
\frac{B_t^2 P - P^* B_t^2}{2i} & = -\sgn(\lambda) (G_{1, t}^* G_{1, t}  + G_{2, t}^*G_{2, t}) + E_t + F_t \\
B_t^2 & = G_{2, t} H_t + J_t
\end{align*}
with
\begin{enumerate}
	\item for $t > 0$, $B_t \in \Psi^\frac{2s_1 - m + 1}{2}(X), G_{j, t} \in \Psi^{s_1}(X), E_t \in \Psi^{2s_1}(X)$, \\ $F_t \in \Psi^{2s_1 - 1}(X), H_t \in \Psi^{s_1 - m + 1}, J_t \in \Psi^{2s_1 - m}(X)$,
	\item $B_{t}, G_{j,t}$ are continuous in the topologies of $\Psi^{\frac{2s + m - 1}{2} + \delta}(X), \Psi^{s + \delta}(X)$, respectively, for all $\delta > 0$,
	\item all operators have $\WF'_{L^\infty([0, 1])}$ contained in $U_0$,
	\item $\WF'_{L^\infty([0,1])}(E_t) \cap \hS = \emptyset$,
	\item $B_{t}^* = B_{t}$ (assuming a choice of density for $X$),
	\item $q \in \mathrm{Ell}^s(G_{2,0})$.
\end{enumerate}
\end{lem}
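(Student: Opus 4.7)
The plan is to mirror the construction that underlies Lemma~\ref{lem:s0operators}, modified in two essential ways appropriate to the $s > s_1$ regime. First, since the conclusion requires $B_t$ to be uniformly of order $(2s-m+1)/2$ in $t$ yet of the strictly lower order $(2s_1-m+1)/2$ for each fixed $t > 0$, we replace the compactly supported regularizer by
\begin{align*}
\rho_t(\zeta) = \zeta^{(2s-m+1)/2}(1 + t\zeta)^{s_1 - s},
\end{align*}
which (using $s_1 - s < 0$) is uniformly bounded in $S^{(2s-m+1)/2}$, belongs to $S^{(2s_1-m+1)/2}$ for each $t > 0$, and converges to $\zeta^{(2s-m+1)/2}$ in $S^{(2s-m+1)/2+\delta}$ for every $\delta > 0$, giving properties (1) and (2). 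Second, the overall sign reverses: by the definition of $s_1$ in Section~\ref{sect:s0s1formulas}, on the essential support of our constructed symbols $\lambda\,\rho_t'/\rho_t$ now dominates $\sigma_{m-1}(\tfrac{P-P^*}{2i})$ with sign $\sgn(\lambda)$ rather than the reverse.

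In the coordinates of Lemma~\ref{lem:coordinates}, choose $\chi \in C_c^\infty$ with $\chi = 1$ near $\phi_0(\kappa^{-1}(q))$ and support in $\phi_0(\kappa^{-1}(U_0))$, and set $b_t = \chi(\phi_0)\rho_t(\zeta)$, $a_t = b_t^2$. Using $H_p\zeta = -\lambda$,
\begin{align*}
\tfrac{1}{2}H_p a_t + \sigma_{m-1}\bigl(\tfrac{P-P^*}{2i}\bigr)a_t = \chi(\phi_0)^2 \rho_t^2\Bigl(\sigma_{m-1}\bigl(\tfrac{P-P^*}{2i}\bigr) - \lambda\tfrac{\rho_t'}{\rho_t}\Bigr) + \rho_t^2 \chi(\phi_0) H_p\chi(\phi_0).
\end{align*}
The parenthesized factor has sign opposite to $\sgn(\lambda)$ by the choice of $s_1$, so the first summand equals $-\sgn(\lambda)(g_{1,t}^2 + g_{2,t}^2)$ for smooth real symbols $g_{j,t}$ of order $s$, chosen so that $g_{2,t}$ carries the elliptic factor $|\lambda|^{1/2}\chi(\phi_0)\rho_t \bigl(\tfrac{\rho_t'}{\rho_t} - \sigma_{m-1}/\lambda\bigr)^{1/2}$, ensuring $q \in \Ell(G_{2,0})$. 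Using the coordinate formulas from Lemma~\ref{lem:coordinates} and taking $\chi$ of product form in $(\eta_0,\alpha,\beta)$, one verifies that on $\hS$ the cutoff remainder $\rho_t^2 \chi H_p\chi$ has the same sign as the main term and may be absorbed into the sum of squares; the surviving contribution has essential support disjoint from $\hS$, and becomes $e_t$, giving (4). This absorption is exactly the feature unique to the $s > s_1$ case and is what fails in the $s < s_0$ construction of Lemma~\ref{lem:s0operators}.

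Quantize $b_t, g_{j,t}, e_t$ to $B_t, G_{j,t}, E_t$, symmetrizing $B_t$ by replacing it with $\tfrac{1}{2}(B_t + B_t^*)$, which only changes lower orders. The principal-symbol identity then yields
\begin{align*}
\tfrac{1}{2i}(B_t^2 P - P^* B_t^2) = -\sgn(\lambda)(G_{1,t}^* G_{1,t} + G_{2,t}^* G_{2,t}) + E_t + F_t
\end{align*}
with $F_t \in L^\infty([0,1]_t, \Psi^{2s-1}(X))$. The factorization $B_t^2 = G_{2,t}H_t + J_t$ is produced microlocally: on $\mathrm{esssup}(b_t)$, $g_{2,t}$ is elliptic, so $b_t^2 / g_{2,t}$ is a well-defined symbol whose quantization gives $H_t$ of the required order, with the standard calculus error absorbed into $J_t \in \Psi^{2s - m}$. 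The principal technical hurdle is the two-tier uniformity in (1) and (2): each family must sit uniformly in $L^\infty_t$ of the higher-order class while individually belonging to the lower-order class for $t > 0$. This is precisely what the explicit form of $\rho_t$ arranges, since $(1 + t\zeta)^{s_1 - s}$ and all its $\zeta$-derivatives admit matched symbol estimates at both target orders, uniformly in $t \in [0,1]$.
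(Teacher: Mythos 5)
Your choices of regularizer $\rho_t(\zeta)=\zeta^{(2s-m+1)/2}(1+t\zeta)^{s_1-s}$, the sign analysis of the main term $\rho_t H_p\rho_t+\sigma_{m-1}(\tfrac{P-P^*}{2i})\rho_t^2$ via the definition of $s_1$, the symmetrized quantization, and the factorization $h_t=b_t^2/g_{2,t}$ all match the paper's construction. The gap is in your treatment of the cutoff-derivative term $\rho_t^2\,\chi H_p\chi$, which is exactly the crux of this lemma. With a product-form cutoff in $(\eta_0,\alpha,\beta)$, the piece coming from the $\beta$-cutoff is $\tilde\chi(|\beta|^2)\tilde\chi'(|\beta|^2)H_p(|\beta|^2)$ times the other factors, and by Lemma~\ref{lem:coordinates} one only knows $\iota^*H_p\beta_i\in\IL^2$: this term is $O(|\alpha|^2)$ on $\Sigma$ but has \emph{no} sign. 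So your claim that ``on $\hS$ the cutoff remainder has the same sign as the main term'' fails wherever $\alpha\neq 0$ on the support of the $\beta$-cutoff derivative. It cannot be pushed into $e_t$ either, since its support meets $\hS$ (condition (4) here requires $\WF'_{L^\infty([0,1])}(E_t)\cap\hS=\emptyset$, stronger than in Lemma~\ref{lem:s0operators}), and it cannot be absorbed into $g_{2,t}^2$ by a smallness/domination argument: near the boundary of the cutoff support the ratio $\tilde\chi'/\tilde\chi$ blows up, so the unsigned term is not dominated there by the elliptic main term, and in any case the resulting square root would not obviously be smooth across $\partial\,\supp\chi$.

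The paper resolves precisely this point by \emph{not} using a product cutoff in $(\alpha,\beta)$: it takes a single cutoff $\chi_1=\tilde\chi(\eta_1)$ with $\eta_1=|\beta|^2+C|\alpha|^2$ and $C>0$ chosen large, so that in $H_p\eta_1=2C\frac{\lambda}{\zeta}|\alpha|^2+2Cr+s$ (with $r\in\IL^3$, $s\in\IL^2$) the signed $\alpha$-term dominates the unsigned contributions; then, after shrinking $T$, $\sgn(\lambda)H_p\eta_1\geq 0$ on the relevant support, so the whole cutoff-derivative term on the characteristic set has the same (weak) sign as the main term and is written as a \emph{separate} square, $-\sgn(\lambda)g_{1,t}^2$ with $g_{1,t}=\hat\rho\,\chi_0\,\rho_t\sqrt{-\sgn(\lambda)\chi_1H_p\chi_1}$, using the hypothesis that $\sqrt{-\tilde\chi\tilde\chi'}$ is smooth; only the $\eta_0$-cutoff term $\hat\rho^2\chi_1^2\rho_t^2\,\chi_0H_p\chi_0$ goes into $e_t$. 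Without this non-product choice (or some equivalent device giving the $\beta$-directions a signed derivative), your construction does not produce the required identity with $E_t$ supported off $\hS$ and $F_t$ of order $2s-1$, so you need to repair this step; the remainder of your argument then goes through as you describe.
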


\begin{rmk}
As with Lemma~\ref{lem:s0operators}, we can actually take $F_t, J_t \in L^\infty([0, 1]_t, \Psi^{-\infty}(X))$.
\end{rmk}

As above, we assume this lemma is true and proceed to prove the rest of Theorem~\ref{thm:general}.

\begin{proof} ($s > s_1$ case of Theorem~\ref{thm:general})

We may assume, by shrinking $U_0$ if necessary, the following:
\begin{itemize}
	\item $\WF^{s - \frac{1}{2}(u)}(X) \cap U_0 = \emptyset$, as $q \notin \WF^{s_1}(X)$, and we can inductively improve regularity by $\frac{1}{2}$, each time making $U_0$ smaller.
	\item $\WF^{s_1}(u) \cap U_0 = \emptyset$.
	\item $\WF^{s - m + 1}(Pu) \cap U_0= \emptyset$.
\end{itemize}
As before, we choose a density for $X$, which gives us distributional pairings, and again we may take the constructed operators to have compactly supported Schwartz kernels. For $t > 0$, the following pairings are well-defined (here we use $\WF^{s_1}(u) \cap U_0 = \emptyset$), and equality holds:
\begin{align*}
\frac{1}{2i}\lrangle{u, (B_t^2 P - P^* B^2_t) u} = & - \sgn(\lambda) (\lrangle{u, G_{1, t}^* G_{1 ,t}u} + \lrangle{u, G_{2, t}^* G_{2, t}u}) \\
& +  \lrangle{u, E_t u} + \lrangle{u, F_t u},
\end{align*}

To deal with the left-hand side, we need a lemma:
\begin{lem} \label{lem:integratebyparts}
For $t>0$, $$\lrangle{u, (B_t^2 P - P^* B_t^2) u} = \lrangle{u, B_t^2  Pu} - \lrangle{B_t^2 Pu, u}.$$
\end{lem}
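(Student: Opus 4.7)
The plan is to observe that the claimed identity is the elementary algebraic move $\lrangle{u, (T - T^*) u} = \lrangle{u, Tu} - \lrangle{Tu, u}$ applied to $T = B_t^2 P$, which would be tautological for $u \in L^2$. The only content is therefore to justify each pairing for distributional $u$; for $t > 0$ the regularizer $B_t$ brings every relevant object into an honest Sobolev space, which is what makes this possible.

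First I would record the Sobolev regularity. Under the reductions already in force ($\WF^{s_1}(u)\cap U_0 = \emptyset$ and $\WF^{s-m+1}(Pu)\cap U_0 = \emptyset$) together with items (1), (3), (5) of Lemma~\ref{lem:s1operators} (so for $t>0$, $B_t \in \Psi^{(2s_1-m+1)/2}(X)$ is self-adjoint, with $\WF'_{L^\infty([0,1])}(B_t)\subset U_0$, and with compactly supported Schwartz kernel as arranged earlier), microlocal elliptic regularity gives
\[
B_t u \in H^{(m-1)/2}(X), \qquad B_t P u \in H^{s - s_1 - (m-1)/2}(X).
\]
Because $s > s_1$, these two Sobolev orders sum to $s - s_1 > 0$, so the $L^2$-duality pairing $\lrangle{B_t u, B_t Pu}$ is unambiguously defined.

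Next I would rewrite each of the three pairings using $B_t^* = B_t$ and the defining identity $\lrangle{u, P^* v} = \lrangle{Pu, v}$ for the formal adjoint:
\begin{align*}
\lrangle{u, B_t^2 Pu} &= \lrangle{B_t u, B_t Pu}, \\
\lrangle{u, P^* B_t^2 u} &= \lrangle{Pu, B_t^2 u} = \lrangle{B_t Pu, B_t u}, \\
\lrangle{B_t^2 Pu, u} &= \lrangle{B_t Pu, B_t u}.
\end{align*}
The second and third right-hand sides coincide, so $\lrangle{u, P^* B_t^2 u} = \lrangle{B_t^2 Pu, u}$, and subtracting this from the first yields the claim.

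The only (minor) obstacle is the justification of the adjoint identity $\lrangle{u, A^* v} = \lrangle{Au, v}$ for distributional $u$, with $A = P$ or $A = B_t$. This is handled by a routine density argument: mollify $u$ to $u_\epsilon \in C^\infty$, for which every identity above is immediate; each pairing is then continuous in the Sobolev topologies along which $u_\epsilon \to u$ locally, because $P$, $P^*$, and $B_t$ are bounded between the relevant Sobolev spaces for $t > 0$, so we may pass to the limit and recover the identity for $u$ itself.
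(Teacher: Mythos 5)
There is a genuine gap, and it is exactly the point the paper flags at the start of its own proof: under the standing assumptions it is \emph{not} clear that $\lrangle{u, P^* B_t^2 u}$ is well-defined, yet your argument both asserts the identity $\lrangle{u, P^* B_t^2 u} = \lrangle{Pu, B_t^2 u}$ and implicitly splits $\lrangle{u, (B_t^2 P - P^* B_t^2)u}$ into the two separate pairings $\lrangle{u, B_t^2 P u} - \lrangle{u, P^* B_t^2 u}$ (``subtracting this from the first''). For $t>0$ the regularizer is not smoothing: by Lemma~\ref{lem:s1operators}, $B_t \in \Psi^{(2s_1 - m + 1)/2}(X)$, while $u$ has only \emph{microlocal} regularity on $U_0$ ($H^{s_1}$, and inductively $H^{s - \frac12}$) and is an arbitrary distribution elsewhere. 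At the first inductive step $s = s_1 + \frac12$ one knows only that $u$ is microlocally $H^{s_1}$ on $U_0$, whereas $P^* B_t^2 u$ is microlocally $H^{-s_1 - 1}$ there, so the duality count for $\lrangle{u, P^* B_t^2 u}$ gives $-1 < 0$: neither the well-definedness of that pairing nor the splitting follows from Sobolev duality. The left-hand side of the lemma is meaningful only because the order-$(2s_1+1)$ principal parts of $B_t^2 P$ and $P^* B_t^2$ cancel, leaving an operator of order $2s_1$ (equivalently, because of the $G_{j,t}, E_t, F_t$ decomposition); relating that lower-order pairing to a difference of two order-$(2s_1+1)$ pairings is the entire content of the lemma, not a bookkeeping step. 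Your computations for the terms in which $P$ hits $u$ first (e.g.\ $\lrangle{u, B_t^2 P u} = \lrangle{B_t u, B_t P u}$, with orders summing to $s - s_1 > 0$) are fine; the unjustified move is precisely the rewriting of the term in which $P^*$ acts last.

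The proposed mollification does not close this gap. Globally $u_\epsilon \to u$ only in some $H^{-N}$, and the continuity you invoke is available only microlocally on $U_0$, where, as above, the orders in the problematic pairing do not sum to $\geq 0$ when $s < s_1 + 1$; moreover, commuting the mollifier past $P^* B_t^2$ produces error terms such as $[P^* B_t^2, J_\epsilon]$, uniformly of order $2s_1$ and tending to $0$ only in order $2s_1 + \delta$, and handling $\lrangle{u, [P^* B_t^2, J_\epsilon] u}$ requires the uniform-boundedness/strong-convergence interplay of Lemma~\ref{lem:regularize} together with microlocal cutoffs --- which is to say, your ``routine density argument'' is exactly where the work lies. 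The paper's proof is built around these issues: it inserts a second regularizer $A_{t'} \in \Psi^{-\infty}(X)$ for $t' > 0$, so that every pairing is honestly defined \emph{before} any operator is moved across $\lrangle{\cdot,\cdot}$, keeps the commutator $[P^* B_t^2, A_{t'}]$ explicitly, and passes to the limit $t' \to 0$ using a cutoff $A'$ with $\WF'(\Id - A') \cap \WF'(B_t) = \emptyset$ and Lemma~\ref{lem:regularize}. If you carry out your density step with that amount of structure, your argument becomes correct --- but it is then the paper's proof rather than a shortcut around it.
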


\begin{proof}
It is tempting to simply conclude this immediately, but note that it is not clear just by the regularity assumptions that $\lrangle{u, P^* B_t^2 u}$ is well-defined. This was not a problem in the $s < s_0$ setting because there $B_t \in \Psi^{-\infty}(X)$ for $t >  0$, but now the order is higher. Thus to prove this, we regularize again. This is a fairly standard argument, but since there are several details that need to be verified in order to be sure that it works in this instance, we write the argument out in some detail. Let $A_{t'} \in L^\infty([0, 1]_{t'}, \Psi^0(X))$ be such that $A_{t'} \in \Psi^{-\infty}(X)$ for $t' > 0$, and $A_{t'} \rightarrow \Id$ as ${t'} \rightarrow 0$ in $\Psi^\delta(X)$ for $\delta > 0$.

Fixing $t > 0$, then for $t' > 0$, we have
\begin{align*}
\lrangle{u, A_{t'} (B_t^2 P - P^* B_t^2) u} & = \lrangle{u, A_{t'} B_t^2 Pu} - \lrangle{u, P^* B_t^2 A_{t'} u} \\ 
& \quad + \lrangle{u, [P^* B_t^2, A_{t'}]u} \\
& = \lrangle{u, A_{t'} B_t^2 Pu}  - \lrangle{A_{t'} B_t^2 P u, u} \\
& \quad + \lrangle{u, [P^*B_t^2, A_{t'}]u}.
\end{align*}
Note that, as $t' \rightarrow 0$, $[P^*B_t^2, A_{t'}] \rightarrow 0$ in $\Psi^{2s_1 + \delta}(X)$ for $\delta > 0$. 

Let $A' \in \Psi^0(X)$ be such that $\WF'(A') \subset U_0$ and $\WF'(\operatorname{Id} - A') \cap \WF'(B_t) = \emptyset$. Then 
\begin{align*}
\lrangle{u, A_{t'} B_t^2 P u} = & \lrangle{A' u, A_{t'} A' B_t^2 P u}  +  \lrangle{u, (\Id - A'^*) A_{t'} A' B_t^2 P u} \\
&+ \lrangle{u, A_{t'} (\Id - A') B_t^2 P u}.
\end{align*}
We have $A'B_t^2 Pu \in H^{s_1}(X)$, $(\Id - A'^*) A_{t'} A' B_t^2 P \in L^\infty([0, 1]_{t'}, \Psi^{-\infty}(X))$, and $A_{t'} (\Id - A') B_t^2 P \in L^\infty([0, 1]_{t'}, \Psi^{-\infty}(X))$. Thus, we can apply Lemma~\ref{lem:regularize}, and obtain $$\lrangle{u, A_{t'} B_t^2 Pu} \rightarrow \lrangle{u, A_{t'} B_t^2 Pu}$$ as $t' \rightarrow 0$. Handling the other terms similarly, we have
\begin{align*}
\lrangle{u, A_{t'} (B_t^2 P - P^* B_t^2) u} & \rightarrow \lrangle{u, (B_t^2 P - P^* B_t^2) u}, \\
\lrangle{A_{t'} B_t^2 P u, u} & \rightarrow \lrangle{B_t^2 P u, u} \\
\lrangle{u, [P^* B_t^2, A_{t'}]u} & \rightarrow 0
\end{align*}
as $t' \rightarrow 0$. This proves the lemma.
\end{proof}

Finishing the proof of Theorem~\ref{thm:general}, we have, as in the $s < s_0$ case, $$|\mathrm{Im}(\lrangle{B_t^2 u, Pu})| \leq |\mathrm{Im} \lrangle{u, J_t Pu}| + \frac{c}{2}\|G_{2,t} u\|^2 + \frac{1}{2c}\|H_t Pu\|^2,$$ for any $c > 0$, which we again take to be $<2$. We then have, for $t > 0$, 
\begin{align*}
\|G_{1, t}\|^2 + (1 - \frac{c}{2})\|G_{2, t}u\|^2 & \leq \frac{1}{2c}\|H_tPu\|^2 + |\mathrm{Im}\lrangle{ u, J_t Pu}| \\
& \quad + |\lrangle{u, E_t u}| + |\lrangle{u, F_t u}|
\end{align*}
All terms on the right side remain bounded as $t \rightarrow 0$ (the only difference from the $s < s_0$ case is that $\WF'_{L^\infty([0,1])}(E_t) \cap \hS = \emptyset$, so $\lrangle{u, E_t u}$ remains bounded simply by elliptic regularity). As in the $s < s_0$ case, we conclude that $G_{2, 0}u \in L^2(X)$, so $q \notin \WF^s(u)$. 
\end{proof}

\section{Construction of Operators} \label{sect:operatorconstruction}

Here we prove Lemmas~\ref{lem:s0operators} and \ref{lem:s1operators}. To do this, we construct symbols supported in $U = \kappa^{-1}(U_0)$, and quantize these. For this section, we do not need to be too careful about our choice of quantization. We require that our quantization $q$ satisfies $$\WF'_{L^\infty([0, 1])}(q(a_t)) = \mathrm{esssup}_{L^\infty[0, 1]}(a_t),$$ where $a_t \in L^\infty([0, 1], S^r(X))$. We also require that if $a \in S^r(T^*X)$ is real-valued, $q(a) - q(a)^* \in \Psi^{r-1}(X)$.  These are both easy to accomplish: the standard left and Weyl quantizations in $\RR^n$ satisfy this, and we can simply patch either of these together.

\subsection{Proof of Lemma~\ref{lem:s0operators}} \label{sect:s0operatorconstruction}

It suffices to produce symbols
\begin{align*}
b =  (b_t)_{t \in [0, 1]} &  \in L^\infty([0, 1]_t, S^\frac{2s - m + 1}{2}(T^*X)), \\
g_1 = (g_{1, t})_{t \in [0, 1]}, g_2 = (g_{2, t})_{t \in [0, 1]} & \in L^\infty([0,1]_t, S^s(T^*X)), \\
e = (e_t)_{t \in [0, 1]} & \in L^\infty([0,1]_t, S^{2s}(T^*X)), \\
h = (h_t)_{t \in [0, 1]} & \in L^\infty([0,1]_t, S^{s - m + 1}(T^*X)), \\
\end{align*}
such that
\begin{align*}
\frac{1}{2}H_p b_t^2 + \sigma_{m-1}(\frac{P - P^*}{2i})b_t^2 & = \sgn(\lambda)(g_{1,t}^2 + g_{2, t}^2) + e_t \\
b_t^2 & = g_{2, t} h_t
\end{align*}
with:
\begin{enumerate}
	\item all symbols of order $-\infty$ for $t > 0$,
	\item $b_t, g_{j, t}$ are continuous in the topologies of $S^\frac{2s - m + 1 + \delta}{2}(T^*X)$ and $S^{s + \delta}$, respectively, for all $\delta > 0$,
	\item $\supp(b_t), \supp(e_t), \supp(g_{j,t}) \subset \kappa^{-1}(U_0)$,
	\item $\mathrm{esssup}_{L^\infty([0, 1])}(e_t) \cap \Lambda = \emptyset$,
	\item all symbols real-valued,
	\item $q \in \mathrm{Ell}(g_{2, t})$.
\end{enumerate}
Indeed, let $B_t = \frac{q(b_t) + q(b_t)^*}{2}$, $G_{j, t} = q(g_{j,t})$, $E_t = q(e_t)$ and $H_t = q(h_t)$. Then $\sigma_{2s}(B^2 P - P^* B^2 - \sgn(\lambda)( G_{1,t}^* G_{1,t} + G_{2, t}^*G_{2, t}) - E_t) = 0$, so the error $F_t$ is as desired. Further, we have $$B_t^2 = H_t G_{2,t }+ J_t$$ for some $J_t$ as desired.

To construct $b_t$, we first assume (by shrinking $U := \kappa^{-1}(U_0)$ if necessary) that $U$ has a coordinate chart $\phi$ as in Lemma~\ref{lem:coordinates}.
We choose functions $\chi_0, \chi_1, \chi_2 \in C^\infty(U)$ homogeneous of degree $0$, and $\rho_t \in L^\infty([0, 1]_t, S^\frac{2s - m + 1}{2}(X))$, so that $\chi_0 \chi_1 \chi_1$ functions as the cutoff $\chi(\phi_0)$ did in Section~\ref{sect:sketch}, and $\rho_t$ is the weight with desired order properties. As in Section~\ref{sect:s0s1formulas}, we let $\hat{\chi} \in C^\infty_c(\RR)$ be identically $1$ in a neighborhood of $0$. Then let $\rho_t = \zeta^\frac{2s - m + 1}{2} \hat{\chi}(t \zeta)$. As in our definition of $s_0$, choose an open neighborhood $U_0' \subseteq U_0$ of $q$, along with $\zeta_0 \in \RR_+$, so that $$\rho_t H_p \rho_t + \sigma_{m-1}\bigl(\frac{P - P^*}{2i}\bigr) \rho_t^2$$ remains the same sign as $\lambda$ inside $\kappa^{-1}(U_0') \cap \zeta^{-1}((\zeta_0, \infty))$. As this is only true for $\zeta > \zeta_0$, we need to include an additional cutoff (this also serves to make homogeneous symbols smooth up to the zero-section of $T^*X$) $\hat{\rho}: U \rightarrow \RR$ such that $\hat{\rho}$ is identically $0$ for $\zeta \leq \zeta_0$ and identically $1$ for $\zeta \geq \zeta_0 + 1$. We then let $$b_t =  \hat{\rho}(\zeta) \chi_0 \chi_1 \chi_2 \rho_t$$ inside $U$ and identically $0$ outside $U$. This will have the desired properties if:
\begin{itemize}
	\item $\sqrt{\sgn(\lambda) \chi_1 H_p \chi_1 }$ is real-valued and smooth,
	\item $\kappa(\supp(\chi_0 \chi_1 \chi_2))$ is a compact subset of $U_0'$. 
	\item $\supp(\chi_0 \chi_1 H_p \chi_2) \cap \Lambda = \emptyset$, and
	\item $\supp \chi_1 \chi_2 H_p \chi_0 \cap \Sigma = \emptyset$,
\end{itemize}

To construct $\chi_0, \chi_1$, and $\chi_2$, let $\eta_1, \eta_2: U \rightarrow \RR$ be defined by $\eta_1 = |\beta|^2 + C|\alpha|^2$, $\eta_2 = |\alpha|^2$, with $C < 0$ to be chosen. Recall that we define $\eta_0 = \frac{p}{\zeta^m}$ a coordinate of $\phi$ in Lemma~\ref{lem:coordinates}. Let $\tilde \chi \in C^\infty(\RR)$ so that
\begin{itemize}
	\item $\tilde \chi \geq 0$,
	\item $\tilde \chi = 1$ for $t \in (-\infty, \epsilon)$,
	\item $\tilde \chi(t) = 0$ for $t \geq T$,
	\item $\tilde \chi' \leq 0$,
	\item $\sqrt{-\tilde \chi \tilde \chi'} \in C^\infty(\RR)$,
\end{itemize}
with $T$ to be chosen, and $0 < \epsilon < T$ arbitrary.

\begin{figure}
\center
\def\svgwidth{\columnwidth}
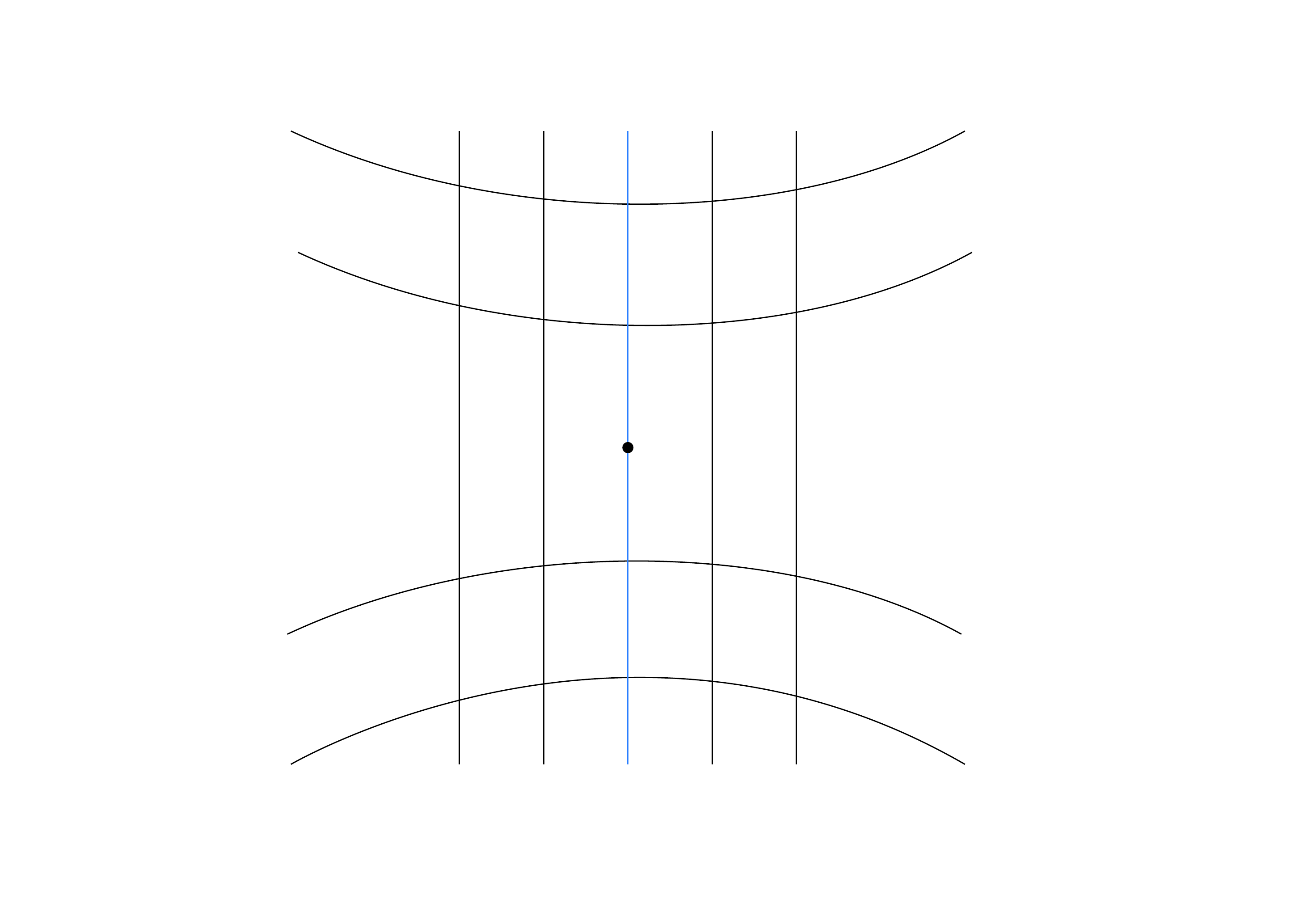
\caption{Values of $\eta_1, \eta_2$ on $\hS$}
\label{fig:s0supports}
\end{figure}

To choose $C, T$ appropriately, note that, by Lemma~\ref{lem:coordinates},
$$H_p \eta_1 = 2C\frac{\lambda}{\zeta} |\alpha|^2 + 2Cr + s,$$ where $r, s$ are homogeneous of order $m-1$ in $\zeta$, and $\iota^* s \in \IL^2, \iota^* r \in \IL^3$, where as before we let $\iota: \Sigma \cap U \hookrightarrow U$ be inclusion. Choose $C$ so that $C \frac{\lambda}{\zeta}|\alpha|^2 + s$ is of the opposite sign as $\lambda$ on of $\Sigma \cap U$). Then choose $T > 0$ sufficiently small so that $H_p \eta_1$ is of the opposite sign as $\lambda$ on $\supp(\tilde \chi(\eta_1) \tilde \chi(\eta_2) \tilde \chi(\eta_0^2))$, whose image under $\kappa$ is a compact subset of $U_0'$. We then let
\begin{align*}
\chi_0 & = \tilde \chi (\eta_0^2), \\
\chi_1 & = \tilde \chi( \eta_1), \\
\chi_2 & = \tilde \chi(\eta_2).
\end{align*}

We then define
\begin{align*}
	g_{1, t} & = \hat \rho(\zeta) \chi_2 \chi_3 \rho_t \sqrt{\sgn(\lambda) \chi_1 H_p \chi_1}  \\
	g_{2, t} & = \hat \rho(\zeta)\chi_1 \chi_2 \chi_3 \sqrt{\sgn(\lambda) (\rho_t H_p \rho_t + \sigma_{m-1}(\frac{P - P^*}{2i}) \rho_t^2)} \\
	e_t & = \hat \rho^2 \chi_1^2 \chi_3^2 \chi_2 H_p \chi_2 + \hat \rho^2\chi_1^2 \chi_2^2 \chi_3 H_p \chi_3 + \chi_1^2 \chi_2^2 \chi_3^2 \rho_t^2 \hat \rho H_p \hat \rho
\end{align*}
in $U$, and extend these to all of $X$ as identically $0$ outside of $U$. Note that the above choices of $C, T$, and $\epsilon$ ensure that $g_{1, t}$ and $g_{2, t}$ are smooth and real-valued, and that $q \in \mathrm{Ell}(g_{2, 0})$. The above choices also ensure the desired essential support for $e_t$, and we have
$$\frac{1}{2}H_p b_t^2 + a b_t^2 = \sgn(\lambda)(g_{1, t}^2 + g_{2, t}^2) + e_t.$$
Lastly, we can set $$h_t = \frac{b_t^2}{g_{2, t}} = \hat \rho \chi_1 \chi_2 \chi_3 \frac{\rho_t^2}{\sqrt{\sgn(\lambda)(\rho_t H_p \rho_t + \sigma_{m-1}(\frac{P - P^*}{2i}) \rho_t^2)}}$$ inside $U$, and identically $0$ outside of $U$. The symbols thus have the desired properties. \qed

\subsection{Proof of Lemma~\ref{lem:s1operators}} \label{sect:s1operatorconstruction}

It suffices to produce symbols
\begin{align*}
b = (b_t)_{t \in [0, 1]} & \in L^\infty([0, 1]_t, S^\frac{2s - m + 1}{2}(T^*X)), \\
g_1 = (g_{1, t})_{t \in [0, 1]}, g_2 = (g_{2, t})_{t \in [0, 1]} & \in L^\infty([0,1]_t, S^s(T^*X)), \\
e = (e_t)_{t \in [0, 1]} & \in L^\infty([0, 1]_t, S^{2s}(T^*X)), \\
h = (h_t)_{t \in [0, 1]} & \in L^\infty([0, 1]_t, S^{s - m + 1}(T^*X))
\end{align*} 
such that
\begin{align*}
\frac{1}{2} H_p b_t^2 + \sigma_{m-1}(\frac{P - P^*}{2i}) b_t^2 & = - \sgn (\lambda) (g_{1, t}^2 + g_{2, t}^2) + e_t \\
b_t^2 & = g_{2, t} h_t
\end{align*}
up to order $-\infty$, with:
\begin{enumerate}
	\item for $t > 0$, $b_t \in S^\frac{2s_1 - m + 1}{2}(T^*X), g_{j, t} \in S^{s_1}(T^*X), e_t \in S^{2s_1}(T^*X)$, and $h_t \in S^{s_1 - m +1}(T^*X)$,
	\item $b_t$ and $g_{j, t}$ are continuous in the topologies of $S^\frac{2s - m + 1 + \delta}{2}(T^*X)$ and \\ $S^{s + \delta}(T^*X)$, respectively, for all $\delta > 0$,
	\item all symbols are supported in $\kappa^{-1}(U_0)$,
	\item $\mathrm{esssup}(e_t) \cap \hS = \emptyset$,
	\item all symbols real-valued,
	\item $q \in \mathrm{Ell}(g_{2, 0})$. 
\end{enumerate}

We then quantize as in Section~\ref{sect:s0operatorconstruction}. To construct $b_t$, we again assume (by shrinking $U = \kappa^{-1}(U_0)$ if necessary) that $U$ has a coordinate chart $\phi$ as in Lemma~\ref{lem:coordinates}. We choose functions $\chi_0, \chi_1 \in C^\infty(V)$ homogeneous of degree $0$, and $$\rho_t \in L^\infty([0, 1]_t, S^\frac{2s - m + 1}{2}(X))$$ to serve similar roles as in Section~\ref{sect:s0operatorconstruction}. As in Section~\ref{sect:s0s1formulas}, we let $\rho_t = \zeta^\frac{2s - m + 1}{2} (1 + t\zeta)^{s_1 - s}$. As in our definition of $s_1$, choose an open neighborhood $U_0' \subset U_0$ of $q$, along with $\zeta_0 \in \RR_+$, so that
$$\rho_t H_p \rho_t + \sigma_{m-1}\bigl(\frac{P - P^*}{2i}\bigr) \rho_t^2$$
remains the opposite sign of $\lambda$ inside $\kappa^{-1}(U_0') \cap \zeta^{-1}((\zeta_0, \infty))$. We then take $\hat \rho: U \rightarrow \RR$ to be as in Section~\ref{sect:s0operatorconstruction}. We then let
$$b_t = \hat \rho \chi_0 \chi_1 \rho_t$$
inside $U$ and identically $0$ outside $U$. This will have the desired properties if:
\begin{itemize}
	\item $\sqrt{-\sgn(\lambda) \chi_1 H_p \chi_1}$ is real-valued and smooth.
	\item $\kappa(\supp(\chi_0 \chi_1))$ is a compact subset of $U_0'$.
	\item $\supp \chi_1 H_p \chi_0 \cap \Sigma = \emptyset$
\end{itemize}

To construct $\chi_0$ and $\chi_1$, let $\eta_1: U \rightarrow \RR$ be as before, but this time we will take $C > 0$. Let $\tilde \chi \in C^\infty(\RR)$ be as before, with $T$ to be chosen. We again have
$$H_p \eta_1 = 2 C \frac{\lambda}{\zeta} |\alpha|^2 + 2Cr + s$$ with $r, s$ homogeneous of order $m-1$ in $\zeta$, and $\iota^*  s \in \IL^2, \iota^* r \in \IL^3$ (as before $\iota: \Sigma \cap U \rightarrow U$ is inclusion). Choose $C > 0$ so that $C\frac{\lambda}{\zeta}|\alpha|^2 + s$ has the same sign as $\lambda$ on $\Sigma \cap U$. Then choose $T > 0$ sufficiently small so that $H_p \eta_1$ has the same sign as $\lambda$ on $\supp(\tilde \chi(\eta_1) \tilde \chi(\eta_0^2)$, whose image under $\kappa$ is a compact subset of $U_0'$. We then set, as in Section~\ref{sect:s0operatorconstruction}, $\chi_0 = \tilde \chi(\eta_0^2)$ and $\chi_1 = \tilde \chi(\eta_1)$.

We then let
\begin{align*}
	g_{1,t} & = \hat \rho \chi_0 \sqrt{-\sgn(\lambda) \chi_1 H_p \chi_1} \\
	g_{2,t} & = \hat \rho \chi_1 \chi_0\sqrt{-\sgn(\lambda)( \rho_t H_p  \rho_t  + \sigma_{m-1}(\frac{P - P^*}{2i}) \rho_t^2)} \\
	e_t & = \hat \rho^2 \chi_1^2 \rho_t^2 \chi_0 H_p \chi_0  
\end{align*}
in $U$, and extend these to all of $X$ as identically $0$ outside $U$. Note that the above choices of $C, T$, and $\epsilon$ ensure that ensure that $g_{1, t}$ and $g_{2, t}$ are real-valued and smooth, and that $q \in \mathrm{Ell}(g_{2, 0})$. The above choices also ensure the desired essential support for $e_t$, and we have $$\frac{1}{2} H_p b_t^2 + a b_t^2 = - \sgn(\lambda)(g_{1, t}^2 + g_{2, t}^2) + e_t$$ up to order $-\infty$. We leave out $\chi_0^2 \chi_1^2 \rho_t^2 \hat \rho H_p \hat \rho$ for convenience in adapting this to the proof of Theorem~\ref{thm:tau}.
 
Lastly, we can set $$h_t = \frac{b_t^2}{g_{2, t}} = \hat \rho \chi_0 \chi_1 \frac{\rho_t^2}{\sqrt{-\sgn(\lambda)(\rho_t H_p \rho_t + \sigma_{m-1}(\frac{P - P^*}{2i}))}}$$
on $U$ and identically $0$ outside of $U$. The symbols thus have the desired properties. \qed

\section{Proof of Theorem~\ref{thm:tau}} \label{sect:tauproof}

In the previous proofs, we constructed an operator $B_t$ such that $\frac{1}{2i}(B_t^2 P - P^* B_t^2)$ had some desired properties. The fact that we actually have a squared operator in that expression did not come into play much, and in fact was not needed. Here, however, the extra arrangement shall pay off.

\subsection{Sink Case} \label{sect:tausink}

Using the operator definitions as in section 4.1, let $B = B_0$, $G_j = G_{j,0}$, $E = E_0$, $M = M_0$, $F = F_0$, and $N = N_0$. Then for all $\tau \in [0, 1]$, we have
\begin{align*}
	\frac{1}{2i}(B^2(P - i Q_\tau) - (P^* + i Q_\tau)B^2) & = \frac{1}{2i}(B^2 P - P^* B^2) - \frac{1}{2}(B^2 Q_\tau + Q_\tau B^2) \\
	& = \sgn(\lambda)(G_1^*G_1 + G_2^* G_2) - B Q_\tau B \\
& \qquad + E + F  + \frac{1}{2}[[B, Q_\tau], B]\\
	& = - G_1^*G_1 - G_2^* G_2 - B Q_\tau B + E + F \\
& \qquad + \frac{1}{2}[[B, Q_\tau], B]
\end{align*}
where we used the fact that since $q$ is a sink, $\lambda < 0$. We can assume (by induction) that $\WF_{L^\infty([0, 1])}^{s - \frac{1}{2}}(u_\tau) \cap U_0 = \emptyset$. This time we further choose $U_0$ to be disjoint from $\WF^{s - m + 1}_{L^\infty([0, 1])}((P - i Q_\tau)u_\tau)$ and  $(\WF^s_{L^\infty([0, 1])}(u_\tau) \backslash L) \cap \hS$. The latter can be arranged by Corollary~\ref{cor:geometriccortau}. For $\tau > 0$, we pair with $u_\tau$ as before:
\begin{align*}
\frac{1}{2i}\lrangle{u_\tau, (B^2 (P - i Q_\tau) - (P^* + i Q_\tau) B^2)u_\tau} = & -\lrangle{u_\tau, G_1^*  G_1 u_\tau} - \lrangle{u_\tau, G_2^* G_2 u_\tau} \\
& - \lrangle{u_\tau, B Q_\tau B u_\tau} + \lrangle{u_\tau, E u_\tau}\\
& + \lrangle{u_\tau, (F  + \frac{1}{2}[[B, Q_\tau], B])u_\tau}.
\end{align*}

Note that for $\tau > 0$, these are all well-defined: since $P - i Q_\tau$ is elliptic for $\tau > 0$, by elliptic regularity, $V \cap \WF^{s + 1}(u_\tau) = \emptyset$. Hence $$\lrangle{u_\tau, G_j^2 u_\tau} = \|G_j u_\tau\|^2,$$ and  $$\lrangle{u_\tau, B Q_\tau B u_\tau} = \lrangle{B u_\tau, Q_\tau B u_\tau}$$ are well-defined. By the regularity assumption on $u_\tau$, $$\WF^s_{L^\infty([0, 1])}(u_\tau) \cap \WF'(E) = \emptyset,$$ so $\lrangle{u_\tau, E u_\tau}$ is well-defined and remains bounded as $\tau \rightarrow 0$. By our inductive assumption $\WF^{s - \frac{1}{2}}_{L^\infty([0, 1])}(u_\tau) \cap V = \emptyset$, along with the fact that $F, [[B, Q_\tau], B] \in \Psi^{2s - 1}(X)$, $$\lrangle{u_\tau, (F + \frac{1}{2}[[B, Q_\tau], B]) u_\tau}$$ is well-defined and remains bounded as $\tau \rightarrow 0$. Further, for $\tau > 0$, $$\frac{1}{2i}\lrangle{u_\tau, (B^2(P - i Q_\tau) - (P^* + i Q_\tau)) u_\tau } = \mathrm{Im}(\lrangle{u_\tau, B^2 (P - i Q_\tau) u_\tau})$$ is well-defined, and as before, we have $$|\mathrm{Im}\lrangle{u_\tau, B^2(P - i Q_\tau) u_\tau}| \leq |\mathrm{Im} \lrangle{u_\tau, N (P - i Q_\tau) u_\tau}| + \frac{c}{2}\|G_2 u_\tau\|^2 + \frac{1}{2c}\|M (P - i Q_\tau) u_\tau\|^2$$ for any $c > 0$. By the regularity assumptions on $u_\tau$ and $(P - i Q_\tau) u_\tau$, both $|\mathrm{Im}\lrangle{u_\tau, N(P - i Q_\tau) u_\tau}|$ and $\|M(P - i Q_\tau)u_\tau\|^2$ remain bounded as $\tau \rightarrow 0$.
\begin{align*}
\|G_1 u_\tau\|^2 + &(1 - \frac{c}{2})\|G_2 u_\tau\|^2 + \lrangle{B u_\tau, Q_\tau B u_\tau} \\ & \leq |\mathrm{Im}\lrangle{u_\tau, N(P - i Q_\tau)u_\tau}| + \frac{1}{2c}\|M(P - i Q_\tau) u_\tau\|^2 + |\lrangle{u_\tau, E u_\tau}| \\ & \quad + |\lrangle{u_\tau, (F + \frac{1}{2}[[B, Q_\tau], B])u_\tau}|
\end{align*}
Since $Q_\tau$ is positive semidefinite, $\lrangle{B u_\tau, Q_\tau B u_\tau} \geq 0$, so since all terms on the right hand side remain bounded, all terms on the left hand side remain bounded, and the proof proceeds as in earlier cases.\qed

\subsection{Source Case} \label{sect:tausource}

As we assume no a priori regularity on $u_\tau$ (as we assumed $q \notin \WF^{s_1}(u)$ in the previous theorem, for instance) the argument carries over to this theorem only after some extra preparation. Specifically, we can no longer work by induction, improving regularity by $\frac{1}{2}$ at each step. Since $u_\tau \in L^\infty([0, 1], \mathcal D'(X))$, we only have that $u_\tau \in L^\infty([0, 1], H^{-N}(X))$ for some $N$, but if we were to run the commutator argument and attempt to get regularity $-N + \frac{1}{2}$, the sign of the $G_2$ term would oppose the sign of $B Q_\tau B$, and so we cannot control the sum of these terms. Thus we will instead be more careful with our operator construction and ensure that $F, J \in \Psi^{-\infty}(X)$. As we are controlling errors which vary in $\tau$, we should expect that more of our operators depend on $\tau$. Below, $G_2$, $H$, $F$ and $J$ become $\tau$-dependent operators. While we are making things more precise, we might as well construct $G_1$ and $G_{2, \tau}$ to be self-adjoint along with $B$.

We will construct operators so that we have
\begin{align*}
\frac{1}{2i}(B^2 (P - i Q_\tau) - (P^* + i Q_\tau)B^2) & = - \sgn(\lambda)(G_1^2 + G_{2, \tau}^2) - B Q_\tau B + E + F_\tau \\
& = -G_1^2 - G_{2, \tau}^2 - B Q_\tau B + E + F_\tau \\
B^2 & = H_\tau G_{2, \tau} + J_\tau
\end{align*}
with 
\begin{itemize}
\item $B \in \Psi^\frac{2s - m + 1}{2}(X)$ with $B^* = B$,
\item $G_{1} \in \Psi^{s}(X)$ with $G_1^* = G_1$,
\item $G_2 = (G_{2, \tau})_{\tau \in [0, 1]} \in L^\infty([0,1]_\tau, \Psi^{s}(X))$ with $q \in \mathrm{Ell}_{L^\infty[0, 1]}(G_{2, \tau})$ with $G_\tau^* = G_\tau$,
\item $E \in \Psi^{2s}(X)$ with $\WF'(E) \cap \Sigma = \emptyset$, and 
\item $F_\tau \in L^\infty([0, 1]_\tau, \Psi^{-\infty}(X))$.
\item $J = (J_\tau)_{\tau \in [0, 1]} \in L^\infty([0, 1]_\tau, \Psi^{-\infty}(X))$, and
\item $H = (H_\tau)_{\tau \in [0, 1]} \in L^\infty([0, 1]_\tau, \Psi^{s - m + 1}(X))$ 
\end{itemize}
We want similar supports as before: all operators have $\WF'$ (in the case of operators varying in $\tau$, $\WF'_{L^\infty([0, 1])}$) contained in $U_0$, chosen so that $\WF_{L^\infty([0, 1])}(P - iQ_\tau) \cap U_0 = \emptyset$. 

Assuming this, the argument proceeds as usual: we obtain
\begin{align*}
	\|G_1 u_\tau\|^2 + & (1 - \frac{c}{2})\|G_{2, \tau} u_\tau\|^2 + \lrangle{B u_\tau, Q_\tau B u_\tau} \\ & \leq |\mathrm{Im} \lrangle{u_\tau, J_\tau(P - i Q_\tau) u_\tau}| + \frac{1}{2c}\|H_\tau(P - i Q_\tau) u_\tau\|^2 \\ & \quad + |\lrangle{u_\tau, E u_\tau}| + |\lrangle{u_\tau, F_\tau u_\tau}|
\end{align*}
Since $J_\tau, F_\tau \in L^\infty([0,1]_\tau,\Psi^{-\infty}(X))$, $\lrangle{u_\tau, J(P - i Q_\tau) u_\tau}$ and $\lrangle{u_\tau, F u_\tau}$ remain bounded as $\tau \rightarrow 0$. By the regularity assumption on $(P - i Q_\tau) u_\tau$, $\|H(P - i Q_\tau) u_\tau\|$ remains bounded as well. The proof proceeds as in the previous proofs, and we obtain $G_\tau u_\tau \in L^\infty([0, 1], L^2(X))$. Thus $q \notin WF^s_{L^\infty([0, 1])}(u_\tau)$.

Now we must construct these operators. As we need extra control, we must be more careful in specifying our quantization map $q$. Essentially, since we are working in a coordinate neighborhood, it suffices to use the standard Weyl quantization (we could use another quantization, but we want some operators to be self-adjoint, and this makes that easier) in that neighborhood, and the corresponding full symbol map. To be more precise, let $\pi: T^*X \rightarrow X$ be projection to the base. By shrinking $U$, we may assume that there is an open $U_X' \subset X$ of $\overline{\pi(U)}$ and canonical coordinate chart $\psi: U' = \pi^{-1}(U_X') \rightarrow V' \subset \RR^n_x \times \RR^n_\xi$. Let $\psi_X: U_X' \rightarrow \pi(V')$ be the corresponding map for the base. Let $g \in C^\infty_c(X, \RR)$ be identically $1$ in $\pi(U)$ and supported inside $\pi(U')$. We define a quantization 
$$q: S^r_0(U) \rightarrow \Psi^r(X)$$ as follows, where $S^r_0(U)$ is the space of symbols on $X$ whose support is contained in $U$. Given $a \in S^r_0(U)$ and $v \in C^\infty(X)$, define 
$$q(a)v = g \psi_X^*( q_W((\psi^{-1})^* a)(\psi_X^{-1})^*(gv)),$$ extended as identically $0$ outside of $U'$ (implicitly in the formula, we extend $(\psi_X^{-1})^*(gv)$ as $0$ outside $U'_X$, and we extend $(\psi^{-1})^* a$ as $0$ outside of $U'$). Further, we have a full symbol map $$\sigma: \Psi^r(X) \rightarrow S^r(\RR_x^n; \RR_\xi^n),$$ defined as follows. Given $A \in \Psi^r(X)$, we may associate with it with an element of $\Psi^r(\RR_x^n)$ by $v \mapsto (\psi_X{^-1})^*(gA(g\psi_X^*v))$, extending as identically $0$ outside of $U_X'$. We then use the standard Weyl full symbol map on this operator. 

This quantization and corresponding symbol map have the following properties. First, given $A \in \Psi^r(X)$, $\sigma_r(A)|_U$ has as a representative $\sigma(A)|_U$. Second, $\psi^* \circ \sigma \circ q$ is the identity on $S_0^r(U)$, at least after extending the image of this map to be identically $0$ outside of $U$. Third, if we choose a density on $X$ which agrees with the standard density on $\RR^n_x$ when pulled back by $\psi_X^{-1}$, then if $a \in S^r_0(U)$ is real-valued, $q(a)$ is self-adjoint. Fourth, $$\WF'(A) \cap U \subseteq \psi^{-1}(\mathrm{esssup}(\sigma(A)))$$ for any $A \in \Psi^r(X)$. Fifth, given $A \in \Psi^r(X)$ and $B \in \Psi^{r'}(X)$, then we have the following asymptotic expansion, valid only inside $\phi(U)$:
$$\sigma(A \circ B)(x, \xi) \sim \sum^\infty_{j = 0} \frac{1}{j!} \{\sigma(A), \sigma(B)\}_j(x, \xi)$$
where $\{a, b\}_j(x, \xi) := (\frac{i}{2})^j (D_\xi \cdot D_y - D_x \cdot D_\eta)^j a(x, \xi) b(y, \eta)|_{y = x, \eta = \xi}$

In what follows, we leave out pullbacks by $\psi$ and $\psi^{-1}$ so as to avoid cluttered formulas. Let $b = b_0, e = e_0, g_1 = g_{1, 0}, g_{2, s} = g_{2, 0}$ as defined in Section~\ref{sect:s1operatorconstruction} (so all supported within $U$), with an additional condition on $\tilde{\chi}:\RR \rightarrow \RR$. In some small neighborhood of $T$, we would like $\tilde{\chi}(t) = \exp(-\frac{1}{T - t})$ for $t < T$, and $ \tilde{\chi}(t) = 0$ for $t \geq T$. The details do not matter so much; it simply achieves what we really need:
\begin{itemize}
	\item $\tilde{\chi}'(t) = r(t)\tilde{\chi}(t)$ on $t < T$ for some rational function $r$ which is smooth for $t < T$.
	\item $s(t)\tilde \chi(t)$ is smooth for any rational function $s$ which is smooth on $t < T$.
\end{itemize}

We then let $B = q(b), E = q(e), G_1 = q(g_1)$, and the strategy will be to include lower-order terms for $G_2$ to cancel error terms.  We proceed to choose real-valued $g_{2, s - j} \in L^\infty([0, 1], S_0^{s-j}(U))$ ($g_{2, s}$ has already been chosen and is $\tau$-independent - hence if $g_{2, s}$ is elliptic at $q$, $q \in \Ell_{L^\infty([0, 1])}(G_{2, \tau})$) and then create real-valued $g_2 \in L^\infty([0,1], S_0^s(U))$ with asymptotic expansion 
\begin{equation} \label{gerrorreduce1}
g_2 \sim \sum_{j = 0}^\infty g_{2, s - j}
\end{equation}
so that if $G_{2, \tau} = q(g_2)$, then $F_\tau \in L^\infty([0,1]_\tau, \Psi^{-\infty}(X))$. Note that if each $g_{2, s - j}$ is real-valued, then $g_2$ can be chosen to be real valued. Thus $B, E, G_1,$ and $G_{2,\tau}$ are self-adjoint. 

Let
\begin{align}
A & := \frac{1}{2i} (B^2(P - i Q_\tau) - (P^* + i Q_\tau) B^2) + G_1^2 + B Q_\tau B - E \label{gerrorreduce2}\\
& = \frac{1}{2i}(B^2 P - P^* B^2) + \frac{1}{2}[[B, Q_\tau], B] + G_1^2 - E \notag
\end{align}
We would thus like to choose $g_{2, s - j}$ so that $A + G_{2, \tau}^2 \in L^\infty([0, 1]_\tau, \Psi^{-\infty}(X))$. We have the following asymptotic expansion:
\begin{equation} \label{gerrorreduce3}
\sigma(A) \sim \sum^\infty_{j = 0} a_{2s - j}, a_{2s - j} \in S^{2s - j}(\RR_x^n \times \RR_\xi^n)
\end{equation}
where
\begin{align}
a_{2s - j} = &  \sum_{k + l = j + 1} \frac{1}{k! l!} \biggl( \frac{(\{\{b, b\}_l, \sigma(P)\}_k - \{\sigma(P^*), \{b, b\}_l\}_k)}{2i} \notag  \\
& \qquad + \frac{\{\{b, \sigma(Q_\tau)\}_l - \{\sigma(Q_\tau), b\}_l, b\}_k}{2} \notag \\
& \qquad - \frac{\{b, \{b, \sigma(Q_\tau)\}_l - \{\sigma(Q_\tau), b\}_l\}_k}{2}\biggr) \label{gerrorreduce5}\\
& + \sum_j \frac{\{g_1 , g_1\}_j}{j!} \notag \\
& + \delta_{0j} e \notag
\end{align}
up to order $-\infty$ - we leave out all terms where a derivative is applied to $\hat \rho$. 
Note that our earlier construction ensured that $a_{2s} = - g_{2, s}^2$, up to order $-\infty$. The specifics of this are not so important, except that each $a_{2s - 1 - j}$ is a sum of functions of the form $r \chi_0^2 \chi_1^2 g \hat \rho^2$, where $g$ is smooth and $r$ is a rational function with poles at $C|\beta|^2 + |\alpha|^2 = T$ and $\eta_0^2 = T$ (i.e., the boundary of $\supp( \chi_0 \chi_1)$). Denote this property by (*).

We define $g_{2, s - j}$ recursively for $j > 0$:
\begin{align}
	g_{2, s - j} = & - \frac{a_{2s - j}}{2g_{2, s}} - \sum_{0 < k + l \leq j, k > 0, l > 0} \frac{ \{g_{2, s - k}, g_{2, s - l}\}_{j - k - l} }{2g_{2, s} (j - k  - l)!} \label{gerrorreduce6}\\
	& - \sum_{0 < k < j} \frac{\{g_{2, s}, g_{2, s - k}\}_{j - k} + \{g_{2, s - k}, g_{2, s}\}_{j - k}}{2g_{2, s} (j - k)!} \notag
\end{align}
(up to order $-\infty$ - we again leave out all terms where a derivative is applied to $\hat rho$) where $g_{2, s} \neq 0$ and identically $0$ when $g_{2, s} = 0$. Note that this recursive definition makes sense: the definition for $g_{2, s - j}$ depends only on $g_{2, s - l}$ for $l < j$. Further, these are smooth: since $a_{2s - j}$ has property (*), and $g_{2, s}$ is $\chi_0 \chi_1 \hat \rho$ times a nonvanishing function, we may recursively check the numerator in the definition of $g_{2, s - j}$ always has property (*), using the properties of $\tilde \chi$.

Lastly, note that $A + G_2^2 \in L^\infty([0,1]_\tau,\Psi^{-\infty}(X))$: we have asymptotic expansion
\begin{align}
	\sigma(G_2^2 ) & = \sum_{k + l \leq j} \frac{ \{g_{2, s - k}, g_{2, s - l}\}_{j - k - l}}{(j - k - l)!} \notag \\
	& = 2g_{2, s}g_{2, s - j} + \sum_{k + l \leq j, k > 0, l > 0}  \frac{\{g_{2, s - k}, g_{2, s - l}\}_{j - k - l}}{(j - k - l)!} \label{gerrorreduce7} \\
	& \quad + \sum_{0 < k < j} \frac{\{g_{2, s}, g_{2, s - k}\}_{j - k} + \{g_{2, s - k}, g_{2, s}\}_{j - k}}{(j - k)!}, \notag
\end{align}
and each $g_{2, s - j}$ is chosen so that $2g_{2, s}g_{2, s - j}$ cancels out all other terms of order $2s - j$ in the asymptotic expansion for $A + G_2^2$.

To ensure that $J_\tau \in L^\infty([0, 1], \Psi^{-\infty}(X))$, we will construct $H_\tau$ in a similar way. That is, we will let $H_\tau = q_L(h_\tau)$, where 
\begin{equation} \label{herrorreduce1}
h_\tau \sim \sum^\infty_{j = 0} h_{s - m + 1 - j}, h_{s - m + 1 - j} \in L^\infty([0, 1], S^{s - m + 1 - j}_0(U)),
\end{equation}
defined recursively. For any such choice of $h$, we have
\begin{align}
	\sigma(B^2 - G_2 H) \sim & \sum^\infty_{j = 0} \biggl( - h_{s - m + 1 - j} g_{2, s} + \frac{\{b, b\}_j}{j!} \notag \\
	& \qquad - \sum_{k + l \leq j, l > 0}  \frac{\{h_{s - m + 1 - k}, g_{2, s - l}\}_{j - k - l}}{(j - k - l)!} \label{herrorreduce2}\\
	& \qquad - \sum_{k < j} \frac{\{h_{s - m + 1 - k}, g_{2, s}\}_{j - k}}{(j - k)!}\biggr) \notag
\end{align}

This gives us the formula for the recursive definition of $h_{s - m + 1 - j}$:
\begin{align}
	h_{s - m + 1 - j} = & \frac{\{b, b\}_j}{j! g_{2, s}} -  \sum_{k + l \leq j, l > 0}  \frac{\{h_{s - m + 1 - k}, g_{2, s - l}\}_{j - k - l}}{(j - k - l)! g_{2, s}} \label{herrorreduce3} \\
	& \qquad \quad - \sum_{k < j} \frac{\{h_{s - m + 1 - k}, g_{2, s}\}_{j - k}}{(j - k)!g_{2, s}} \notag
\end{align}
(up to order $-\infty$ - we again leave out all terms where a derivative is applied to $\hat \rho$) when $g_{2, s} \neq 0$, and $0$ otherwise.
As before, we may inductively conclude that the numerators in the formula for $h_{s - m + 1 - j}$ all have property (*), so this definition makes sense. Further, $h_{s - m + 1 - j}$ is defined so that $h_{s - m + 1 - j} g_{2, s}$ cancels out all other terms of order $2s - m + 1 - j$ in the asymptotic expansion for $B^2 - G_2 H$. This completes the proof. \qed

\section{Iterative Regularity} \label{sect:iterativeregularity}

Here we state and prove analogs/generalizations of the above in the context of Lagrangian regularity. This largely applies the discussion of \cite[Section 6]{hmv1}, as corrected in \cite[Appendix A]{hmv2}. We provide full details here instead of simply quoting the results, in part to translate from the scattering setting, and in party to slightly modify the assumptions.

We begin by defining this sense of regularity. Given $O \subset S^*X$, let 
$$\Psi^r(O) = \{A \in \Psi^r(X) \ | \ \WF'(A) \subset O\}.$$

\begin{define}(\cite[Definition 6.1]{hmv1})
A {\em test module} in an open set $O \subset S^*X$ is a linear subspace $\MM \subset \Psi^1(O)$ which (contains and) is a module over $\Psi^0(O)$, which is closed under commutators and which is finitely generated in the sense that there exist finitely many $A_i \in \Psi^1(X), 0 \leq i \leq N, A_0 = \Id$, such that each $A \in \MM$ can be written as
$$A = \sum^N_{i = 0} Q_i A_i, Q \in \Psi^0(O).$$
\end{define}

\begin{rmk}
The generators $A_i$ need not be in $\MM$. As $\Id$ is a generator, $\MM^0 = \Psi^0(O) \subset \MM \subseteq \MM^2 \ldots$.
\end{rmk}

\begin{define}(\cite[Definition 6.2]{hmv1})
Let $\MM$ be a test module in an open set $O \subset S^*X$. For $u \in C^{-\infty}(X)$ we say that $u \in I^{(s)}(O, \MM)$ if $\MM^k u \subset H^s(X)$ for all $k$. We say that $u \in I^{(s), k}(O, M)$ if $\MM^k u \subset H^s$.
\end{define}

Recall that $u \in \mathcal D'(X)$ is a Lagrangian distribution associated to Lagrangian submanifold $\Lambda$ if there exists $s$ such that for any $k$ and any $A_1, \ldots, A_k \in \Psi^1(X)$ with $\sigma_1(A_j)|_\Lambda = 0$, $$A_1 \ldots A_k u \in H^s.$$

We microlocalize this. Given $O \subset S^*X$, $P \in \Psi^m(X)$ with homogeneous principal symbol $p$, and a conic Lagrangian submanifold $\Lambda \subset \Sigma(P)$ such that $H_p$ is radial and nonvanishing on $\Lambda$, we let
$$\M(O) = \{A \in \Psi^1(O) \ | \ \sigma_1(A) \ | \ \Lambda = 0\}.$$

We verify that this is, in fact, a test module. That $\M$ is closed under commutators follows from the fact that $\Lambda$ is coisotropic, as if $a$ and $b$ are symbols which vanish on a given coisotropic submanifold, then $\{a, b\}$ also vanishes on this coisotropic submanifold. For the finite generation, we can assume that $\overline{O} \subset U_0$, with $U_0 = \kappa(U)$ as in Lemma~\ref{lem:coordinates}, as we can microlocalize around such neighborhoods, then patch together with a partition of unity. Let $\chi \in C^\infty(S^*X)$ be identically $1$ in $O$ and $0$ outside of $\kappa(U)$, and let $\hat \rho: U \rightarrow \RR$ be the cutoff as in the proof of Lemma~\ref{lem:s0operators} (so $\hat \rho$ vanishes in a neighborhood of the $0$-section of $T^*X$, and is identically $1$ for sufficiently large $\zeta$). We then let $A_i = q(\chi \hat \rho \alpha_i \zeta)$, $0 < i < n$, $A_0 = \Id$, and $A_n =  q(\chi \hat \rho \zeta^{1 - m}) P$, then $\M$ is generated by $A_i, 0 \leq i \leq n$. This is a principal symbol statement that follows from the fact that $\eta_0, \alpha_i, i = 1 \ldots n-1$ are defining functions for $\Lambda \cap U$.

We then have the following result.  As above, we take $U$ as in Lemma~\ref{lem:coordinates}.
\begin{thm} \label{thm:iterative}
Given $P \in \Psi^m(X)$ with a real-valued homogeneous principal symbol $p$ such that $H_p$ is radial (and nonvanishing) on a conic Lagrangian submanifold $\Lambda \subset \Sigma(P)$, then given $q \in \kappa(\Lambda)$ and $s_0, s_1$ as in Theorem~\ref{thm:general},
\begin{itemize}
	\item For $s < s_0$, if there is an open neighborhood $O'$ of $q$ such that $Pu \in I^{(s - m + 1), k}(O', \M(O'))$ and $\Gamma_q \cap \WF^{s + k}(u) \cap O' = \emptyset$, then there exists an open neighborhood $O \subset O'$ of $q$ such that $u \in I^{(s), k}(O, \M(O))$.
	\item For $s > s_1$, if there is an open neighborhood $O'$ of $q$ such that $Pu \in I^{(s - m + 1), k}(O', \M(O'))$ and $u \in I^{(s_1), k}(O', \M(O'))$, then there exists an open neighborhood $O \subset O'$ of $q$ such that $u \in I^{(s), k}(O, \M(O))$.
	\item For $s \geq s_1 + 1$, if there is an open neighborhood $O'$ of $q$ such that $Pu \in I^{(s - m + 1), k}(O', \M(O'))$ and $\WF^{s_1}(u) \cap O' = \emptyset$, there exists an open neighborhood $O \subset O'$ of $q$ such that $u \in I^{(s), k}(O, \M(O))$.
\end{itemize}
\end{thm}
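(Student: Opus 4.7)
The plan is to induct on $k$, with base case $k = 0$ reducing each bullet to the corresponding statement of Theorem~\ref{thm:general}, since $I^{(s),0}(O, \M(O))$ is the same as ``$u$ is microlocally $H^s$ on $O$.'' The essential structural ingredient is a commutator identity saying that $[P, \cdot]$ preserves $\M$ modulo a $P$-factor: for each generator $A_i$ of $\M(O)$, there exist $C_{ij} \in \Psi^{m-1}(O)$, $D_i \in \Psi^0(O)$, and $R_i$ microlocally smoothing near $L$, such that
$$[P, A_i] = \sum_{j=0}^{n-1} C_{ij} A_j + D_i P + R_i.$$
This follows from Lemma~\ref{lem:coordinates}: $\sigma_m([P,A_i])$ is proportional to $H_p \sigma_1(A_i)$, which vanishes on $\Lambda$ since $H_p$ is tangent to $\Lambda$; the coordinate description of $\Lambda$ then lets one divide $H_p \sigma_1(A_i)$ into a $\Psi^{m-1}$-coefficient combination of the generating symbols $\sigma_1(A_j)$ together with a multiple of $p$, and asymptotic summation of successive symbolic corrections yields the operator identity. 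By Leibniz and repeated commutation (using $[\Psi^{r}, \M] \subseteq \M \cdot \Psi^{r-1}$ mod lower terms), this extends to
$$[P, A_{i_1} \cdots A_{i_k}] = \sum_\alpha C_\alpha \tilde A_\alpha + \sum_\beta D_\beta \tilde A_\beta P + R,$$
with $C_\alpha \in \Psi^{m-1}(O)$, $D_\beta \in \Psi^0(O)$, each $\tilde A_\alpha, \tilde A_\beta$ a product of $k$ generators, and $R$ microlocally smoothing.

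For the first two bullets I would apply Theorem~\ref{thm:general} to $v := \tilde A u$ for each product $\tilde A$ of $k$ generators. The assumption $Pu \in I^{(s-m+1),k}$ directly yields $\tilde A\, Pu \in H^{s-m+1}$ microlocally; via the expansion above, handling $[P,\tilde A]u$ reduces to controlling $C_\alpha \tilde A_\alpha u$ (of order $s - m + 1$ once $\tilde A_\alpha u \in H^s$) and $D_\beta \tilde A_\beta Pu$ (bounded using $Pu \in I^{(s-m+1),k}$). For bullet 1, Corollary~\ref{cor:geometriccor} lifts the $\Gamma_q$-regularity of $u$ to the required input for $\tilde A u$; for bullet 2, the a priori $u \in I^{(s_1),k}$ directly provides $\tilde A u \in H^{s_1}$ microlocally. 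The circular dependence inherent in the $C_\alpha \tilde A_\alpha u$ terms is closed by iterating in half-integer steps of the Sobolev index (starting from $s_1$ for bullet 2, or from sufficiently low regularity for bullet 1), exactly paralleling the inductive shrinking in the proof of Theorem~\ref{thm:general}.

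For the third bullet I would derive the result by combining bullet 2 with the $s > s_1$ case of Theorem~\ref{thm:general}. The one-order slack $s \geq s_1 + 1$ is exactly what is needed to absorb one $\Psi^1$ generator of $\M$: starting from $\WF^{s_1}(u) \cap O' = \emptyset$ and $Pu \in I^{(s-m+1),k}$, I would iteratively establish $u \in I^{(s_1 + j), j}(O_j, \M(O_j))$ on successively smaller neighborhoods, each time applying bullet 2 with the (now-iterated) regularity at the previous level as a priori input. After $k$ steps this delivers $u \in I^{(s_1 + k), k}$, and one more application of bullet 2 (possibly with a final bootstrap) upgrades the Sobolev index from $s_1 + k$ to the desired $s$.

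The hardest step will be the closure of the commutator bootstrap: ensuring that the $C_\alpha \tilde A_\alpha u$ terms in the expansion of $[P, \tilde A]u$ can be controlled without circularity, and that the half-integer induction has enough slack at every stage. Following the strategy of \cite[Section 6]{hmv1} as corrected in \cite[Appendix A]{hmv2}, the cleanest implementation is to fold the module weights into the positive commutator symbol itself—replacing $b_t^2$ from Section~\ref{sect:operatorconstruction} by $b_t^2 \prod_j |\sigma_1(A_{i_j})|^2$, with appropriate regularization—so that the iterated conclusion emerges from a single commutator estimate. The threshold condition \eqref{eq:thresholdformula} and the values of $s_0, s_1$ are unaffected, because the additional factors are homogeneous of degree $0$ after rescaling and their $H_p$-derivatives are controlled by the coordinate formulas of Lemma~\ref{lem:coordinates}.
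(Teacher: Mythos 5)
Your primary argument for the first two bullets---apply Theorem~\ref{thm:general} directly to $v=\tilde A u$ and close the circular dependence ``by iterating in half-integer steps''---does not close. Writing $P\tilde Au=\tilde APu+[P,\tilde A]u$, the problematic terms are the $C_\alpha\tilde A_\alpha u$ with $|\alpha|=k$ and $C_\alpha\in\Psi^{m-1}$: if at some stage you know $u\in I^{(s-1/2),k}$ microlocally, these terms lie only in $H^{s-m+1/2}$, so $P\tilde Au\in H^{(s-1/2)-m+1}$, and Theorem~\ref{thm:general} returns exactly $\tilde Au\in H^{s-1/2}$---what you assumed, with no gain, at every step of the purported bootstrap. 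The vanishing of $\sigma_m([P,A_i])=\frac{1}{i}H_p\sigma_1(A_i)$ on $\Lambda$ makes the symbol of $C_\alpha$ small near $\kappa^{-1}(q)$, but smallness of a symbol is not a Sobolev order, and the hypothesis of Theorem~\ref{thm:general} is a wavefront (membership) statement, so a small constant cannot be traded for the missing half order. The commutator errors have to be absorbed \emph{inside} a positive-commutator quadratic form, where the elliptic-at-$q$ term $\|G_{2,t}A_\gamma u\|^2$ is available to dominate them; they cannot be fed through the propagation theorem as an inhomogeneity.

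The paper does what your closing paragraph only gestures at: it runs the commutator argument with commutant $\sum_{|\gamma|=k}A_\gamma^*B_t^2A_\gamma$, and the mechanism that makes the absorption work is a sharper structural fact than the one you record. By Lemma~\ref{lem:coordinates} one has \eqref{commutatorcondition}: $\frac{1}{2i}[A_i,P]=\sum_jC_{ij}A_j$ with $\sigma_{m-1}(C_{ij})|_\Lambda=0$ for $0<j<n$---equivalently, $H_p(\alpha_i\zeta)$ vanishes to \emph{second} order at $\Lambda$ along $\Sigma$, not merely to first order as in your expansion. It is precisely this quadratic vanishing that lets the order-$2s$ cross terms $B_t^2C_{\gamma\delta}+C_{\delta\gamma}^*B_t^2$ be absorbed into $\sgn(\lambda)G_{2,t}^*G_{2,t}$ after shrinking supports, which is also why the thresholds $s_0,s_1$ are unchanged; with only first-order vanishing the absorption fails. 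Your sketch also omits the ingredients needed to make the $s>s_1$ pairings legitimate and the induction run: a second regularizer as in Lemma~\ref{lem:integratebyparts}, the factorization $B_t^2=H_tG_{2,t}+J_t$ at the matrix level, and the arrangement (as in Section~\ref{sect:tausource}) that $F_t,J_t$ be uniformly smoothing, which is how the paper avoids the two-step induction of the Hassell--Melrose--Vasy correction. Finally, your reduction of the third bullet to the second fails at its first step: $\WF^{s_1}(u)\cap O'=\emptyset$ gives only $\M u\subset H^{s_1-1}$, not the a priori input $u\in I^{(s_1),1}$ that bullet 2 requires at module level $1$; this is reparable (first apply the $k=0$ case to get microlocal $H^s$, whence $\M^ju\subset H^{s-1}\subset H^{s_1}$ since $s\ge s_1+1$), and is in effect how the paper handles $s\ge s_1+1$ inside the same commutator argument, using $A_\gamma u\in H^{s-1}$ from the induction on $k$.
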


\begin{rmk}
We expect that the following strengthening of part of Theorem~\ref{thm:iterative} is true.

{\it Given $P$, $\Lambda$, $s_1$, and $p$ as in Theorem~\ref{thm:iterative}, for $s > s_1$, if there is an open neighborhood $O'$ of $q$ such that $Pu \in I^{(s - m + 1), k}(O', \M(O'))$ and $\WF^{s_1}(u) \cap O' = \emptyset$, then there exists an open neighborhood $O \subset O'$ of $q$ such that $u \in I^{(s), k}(O, \M(O))$.}

The methods used here appear not to be able to deal with this statement - the difficulty comes in the second regularization below (\eqref{secondregularizerstart} - \eqref{secondregularizerend}) and in making sense of terms such as $\|G_{2, t} A u\|$ for even $t > 0$. This is perhaps a defect in our methods; as the definitions are, we can only make sense of $\M^k$ for $k$ a nonnegative integer. If we could inductively improve the orders of regularity with smaller intervals, the arguments might go more smoothly. It may be, however, that with a more clever regularization, this machinery could handle such a statement.

Indeed, this is particularly easy to see in the Fourier transform side of Melrose's setting \cite{melroseasymp} when the operator is classicall and $\lambda_0$ is constant along the Lagrangian, as one should be able to construct explicit solutions $v$ to $Pv = f$, with $v$ having the desired Lagrangian regularity. If this were done, we could compare $v$ to $u$ using Theorem~\ref{thm:general}, and obtain this stronger result.
\end{rmk}

\begin{proof}[Proof of Theorem~\ref{thm:iterative}]

In what follows, we can assume that $\overline{O'} \subset U_0$ with $U_0 = \kappa(U)$ as in Lemma~\ref{lem:coordinates}. In order to prove the above statement, it suffices to show that $G_\gamma A_\gamma \in L^2(X)$, where $\gamma \in \mathbb Z_{\geq 0}^{n-1}, |\gamma| \leq k$, $G_\gamma$ is elliptic on $O$, and 
$$A_\gamma = \prod^{n-1}_{i = 1} A^{\gamma_i}_i.$$
Note that we do not need to include products involving $A_n$, as they are already covered by the assumption on $Pu$. The order $A_1, \ldots, A_{n-1}$ is irrelevant, as products with different orders commute modulo lower order powers of the test module (see \cite[Lemma 6.3]{hmv1} for further details).

To prove the theorem, we use a positive commutator argument.  As we show below, positivity follows from the following property our module enjoys (see \cite[Eq. 6.15]{hmv1} for a more general condition under which such a statement might hold):
\begin{equation} \label{commutatorcondition}
\frac{1}{2i}[A_i, P] = \sum^n_{j = 0} C_{ij} A_j,
\end{equation}
for $i = 1 \ldots n-1$, with $C_{ij} \in \Psi^{m-1}(X)$ for $j = 0 \ldots n$, $\sigma_{m-1}(C_{ij})|_\Lambda = 0$ for $0 < j < n$. This is again a principal symbol statement which follows from Lemma~\ref{lem:coordinates}. From this it follows that, for $\gamma \in \mathbb Z_{\geq 0}^{n-1}$,
$$\frac{1}{2i} [A_\gamma, P] = R_\gamma + \sum_{\delta \in \mathbb Z_{\geq 0}^{n-1}, \ |\delta| = |\gamma|} C_{\gamma \delta} A_\delta,$$
where $C_{\gamma \delta} \in \Psi^{m-1}(X)$ with $\sigma_{m-1}(C_{\gamma \delta})|_{\Lambda} = 0$, and
\begin{equation} \label{iterativeerrorterm}
R_\gamma = \sum_{|\delta| < |\gamma|} D_{\gamma \delta} A_\delta + E_{\gamma \delta} A_\delta P,
\end{equation}
where $D_{\gamma \delta} \in \Psi^{m-1}(X)$ and $E_{\gamma \delta} \in \Psi^0(X)$.

We start with the $s < s_0$ case. We work by induction on $k$ - the base case is Theorem~\ref{thm:general}. Assuming $u \in I^{(s), k-1}(O'', \M(O''))$ for some neighborhood $O'' \subseteq O'$ of $q$, we take $B_t = q(b_t)$ as in the proof of Lemma~\ref{lem:s0operators}, with $\WF'(B) \subset O''$. Below, we will shrink the microsupport, but for now, let us simply look at what operator relations we have, given sufficiently small microsupport. We have
\begin{align*}
\frac{1}{2i} & \sum_{\gamma \in \mathbb Z_{\geq 0}^{n-1}, |\gamma| = k}  A_\gamma^* B_t^2 A_\gamma P - P^* A_\gamma^*  B_t^2 A_\gamma  \\
& = \sum_{\gamma \in \mathbb Z_{\geq 0}^{n-1}, |\gamma| = k} A_\gamma^*\frac{([B_t^2, P] + (P - P^*)B_t^2)}{2i} A_\gamma + A_\gamma^*B^2 \frac{[A_\gamma, P]}{2i}  + \frac{[A_\gamma^*, P^*]}{2i}B_t^2 A_\gamma  \\ 
& = \sum_{\gamma \in \mathbb Z_{\geq 0}^{n-1}, |\gamma| = k} A_\gamma^*\frac{([B_t^2, P] + (P - P^*)B_t^2)}{2i} A_\gamma + R_\gamma^* B_t^2 A_\gamma + A_\gamma^* B_t^2 R_\gamma \\
& \quad + \sum_{\delta \in \mathbb Z_{\geq 0}^{n-1}, |\delta| = k} A_\gamma^* B_t^2 C_{\gamma \delta} A_\delta  + A^*_\delta C_{\gamma \delta}^* B_t^2 A_\gamma \\
& = A^*\biggl( \frac{[B_t^2, P] + (P - P^*)B_t^2}{2i} + B_t^2 C + C^* B_t^2 \biggr)A + R^*B_t^2 A + A^* B_t^2 R
\end{align*}
where in the last line, we let $A = (A_\gamma)$ and $R = (R_\gamma)$ be column vectors, running over all $\gamma \in \mathbb Z_{\geq 0}$ with $|\gamma| = k$, and $C = (C_{\gamma \delta})$ a matrix of operators (or rather an operator on sections of a trivial bundle over $X$).

Using the symbols as in the proof of Lemma~\ref{lem:s0operators}, we have, up to order $2s - 1$,
\begin{align*}
\sigma_{2s}(\frac{[B_t^2, P] + (P - P^*)B_t^2}{2i} + B_t^2 C +  C^* B_t^2)_{\gamma \delta} & = \sgn(\lambda)(g_{1, t}^2 + g_{2, t}^2)\delta_{\gamma \delta} + e_t \delta_{\gamma \delta} \\
& \quad + b_t^2 \sigma_{m-1}(C_{\gamma \delta} + C_{\delta \gamma}^*)
\end{align*}
(the notation might be a little confusing - $\delta_{\gamma \delta}$ is the Kronecker delta with indices $\gamma$ and $\delta$) where $g_{2, 0}$ is elliptic of order $s$ in a neighborhood of $q$. As $\sigma_{m-1}(C_{\gamma \delta} + C_{\delta \gamma}^*)|_\Lambda = 0$, $\sgn(\lambda)g_{2, t}^2 + b_t^2 \sigma_{m-1}(C_{\gamma \delta} + C_{\delta \gamma}^*)$ is elliptic and $\sgn(\lambda)$-definite in a neighborhood of $q$. Thus, if we choose the support of $b$ to be sufficiently small, we have
\begin{align}
A^*\biggl( & \frac{[B_t^2, P]   + (P - P^*)B_t^2}{2i} + B_t^2 C + C^* B_t^2 \biggr)A \label{g2absorb}\\
& = A^*(\sgn(\lambda)(G_{1, t}^*G_{1, t} + G_{2,t}^* G_{2,t})  + E_t + F_t)A \notag
\end{align}
where $G_2$, $E_t$, and $F_t$ are matrices of operators, with $G_{2, 0}$ elliptic in a neighborhood of $q$, $\WF'_{L^\infty[0, 1]}(E_t) \cap \kappa(\Lambda) = \emptyset$, and $F_t$ uniformly (in $t$) of order $-\infty$. This last point is done to avoid using the two-step induction needed in the correction \cite[Appendix A]{hmv2}. The same argument as in \eqref{gerrorreduce1}-\eqref{gerrorreduce7} works here, as the $A$ factors, $\tau$ dependence in the previous setting, $t$ dependence here, and that these are matrices of operators, are irrelevant for the construction. Further, we can choose matrices of operators $H_t$ and $J_t$, uniformly (in $t$) of orders $s - m + 1$ and $-\infty$, respectively, so that
$$B_t^2 = H_t G_{2, t} + J_t$$
as on the level of principal symbols, $B_t$ and $G_{2, t}$ have the same cosphere cutoff functions. That $J_t$ can be made uniformly of order $-\infty$ uses the same argument as \eqref{herrorreduce1}-\eqref{herrorreduce3}.

We proceed with the positive commutator argument in the standard way. For $t > 0$,
\begin{align*}
\frac{1}{2i} (\lrangle{B_t^2 Au, A P u} - \lrangle{A Pu, B_t^2 Au}) & = \sgn(\lambda)(\|G_{1, t} A u\|^2 + \|G_{2, t} A u\|^2) \\
& \quad + \lrangle{Au, E_t Au} + \lrangle{Au, F_t Au} \\
& \quad + \lrangle{R u, B_t^2 A u} + \lrangle{B_t^2 A u, R u}.
\end{align*}

As mentioned above, we assume that $u \in I^{(s), k-1}(O'', \M(O''))$, and we take $\WF'_{L^\infty[0, 1]}(B_t) \subset O''$ and $\WF'(A_i) \subset O''$ for each $i$. We can then bound $\|G_{j, t} A u\|$ as $t \rightarrow 0$, using essentially the same considerations as in previous commutator arguments, with slight modifications. We bound the $\lrangle{R u, B_t^2 A u}$ and $\lrangle{B_t^2 A u, R u}$ terms with a familiar method. For $t > 0$, 
\begin{align*}
|\lrangle{Ru, B_t^2 A u}| & = |\lrangle{H_t R u, G_{2, t} A u} + \lrangle{J_t R u, A u}|\\
& \leq 2\|H_t R u\|^2 + \frac{1}{2} \|G_{2, t} A u\|^2 +  |\lrangle{J_t R u, A u}|
\end{align*}
The $G_{2, t} A u$ term can be absorbed into the other such term we are trying to bound, the $H_t R u$ term can be bounded using the inductive hypothesis and the form \eqref{iterativeerrorterm} of each entry of the vector of operators $R$, and the last term has $J_t$ uniformly of order $-\infty$.

We handle the $s_0 + 1 > s > s_0$ and $s \geq s_0 + 1$ cases together. We again inductively assume that $u \in I^{(s), k - 1}(O'', \M(O''))$, and have
\begin{align*}
	\frac{1}{2i}(A^* B_t^2 A P - P^* A^* B_t^2 A) & = A^*(-\sgn(\lambda)(G_{1, t}^* G_{1, t} + G_{2, t}^* G_{2, t}) + E_t + F_t)A \\
& \quad + R^* B_t^2 A + A^* B_t^2 R
\end{align*}
where now we let $B_t = q(b_t)$ with $b_t$ as in the proof of Lemma~\ref{lem:s1operators}. $G_{2, t}, E_t,$ and $F_t$ are matrices of operators, with $G_{2, 0}$ elliptic of order $s$ and $G_{2, t}$ of order $s_1$ for $t > 0$. $\WF'_{L^\infty[0, 1]}(E_t) \cap \Sigma(P) = \emptyset$, and $F_t$ is uniformly of order $-\infty$ (we again need to use the technique of the proof of the source case of Theorem~\ref{thm:tau}, but again this carries over with little change, so we provide no further details here). As before, we also arrange that
$$B_t^2 = H_t G_{2, t} + J_t$$
with $H_t$ and $J_t$ uniformly of orders $s - m + 1$ and $-\infty$, respectively. We take all operators constructed (including each $A_i$) to have microsupport contained in $O''$.

To proceed with the positive commutator estimate, we introduce a second regularizer as in the proof of Lemma~\ref{lem:integratebyparts} (and for similar reasons - as it stands, we have not yet made sense of terms such as $\lrangle{u, P^* A^* B_t^2 A u}$). Let $(A_\tau) \in L^\infty([0, 1]_\tau, \Psi^0(X)$ be such that $A_\tau \in \Psi^{-\infty(X)}$ for $\tau > 0$, and $A_\tau \rightarrow \Id$ as $\tau \rightarrow 0$ in $\Psi^\delta(X)$ for $\delta > 0$. For $t, \tau > 0$, we can then make sense of
\begin{align}
	\frac{1}{2i}(\lrangle{A_\tau u & , A^* B_t^2 A P u}  - \lrangle{A_\tau u, P^* A^* B_t^2 A u}) = \notag \\
& -\sgn(\lambda)(\lrangle{A_\tau u, A^* G_{1, t}^* G_{1, t} A} + \lrangle{A_\tau u, A^* G_{2, t}^* G_{2, t} A u}) \label{secondregularizerstart}\\
& \lrangle{A_\tau u, (A^* E_t A + A^* F_t A + R^* B_t^2 A + A^* B_t^2 R) u}. \notag
\end{align}
We can then manipulate each term and send $\tau \rightarrow 0$. For instance, for $t, \tau > 0$,
\begin{align}
	\lrangle{A_\tau u, P^* A^* B_t^2 A u} & = \lrangle{A [P, A_\tau] u, B_t^2 A u} + \lrangle{A A_\tau P u, B_t^2 A u}.
\end{align}
$A [P, A_\tau]$ is uniformly a vector of operators in $\Psi^{m-1}(X) \M(O'')^k$.  We then have $A [P, A_\tau] u$ a vector of distributions in $H^{s - m}(X)$, uniformly in $\tau$. For the $s_1 + 1 > s > s_1$ case, by assumption we have $A u$ a vector of distributions in $H^{s_1}(X)$, and for $s \geq s_1 + 1$, we have $Au$ a vector of distributions in $H^{s-1}(X)$, as $A$ is a vector of operators in $\Psi^1(X) \M(O'')$. In either situation, we can, as in the proof of Lemma~\ref{lem:integratebyparts}, take $\tau \rightarrow 0$, and in the limit we get $\lrangle{APu, B_t^2 A u}$. All other terms follow similarly, and we have
\begin{align}
\frac{1}{2i} (\lrangle{B_t^2 Au, A P u} - \lrangle{A Pu, B_t^2 Au}) & = - \sgn(\lambda)(\|G_{1, t} A u\|^2 + \|G_{2, t} A u\|^2)  \notag \\
& \quad + \lrangle{Au, E_t Au} + \lrangle{Au, F_t Au} \label{secondregularizerend}\\
& \quad + \lrangle{R u, B_t^2 A u} + \lrangle{B_t^2 A u, R u}. \notag
\end{align}
We then take $t \rightarrow 0$ as before, completing the proof.

\end{proof}

We also have an analogue of Theorem~\ref{thm:tau} in this iterative regularity setting. In order to have a transparent statement, we impose a technical condition: that $Q_\tau$ has homogeneous principal symbol $q_\tau$ of the form
\begin{equation} \label{extraqtaucondition}
q_\tau = \tau \nu. 
\end{equation}
By assumption, $\nu$ is homogeneous and elliptic around the point of interest $q$.

\begin{thm} \label{thm:tauiterative}
Given $P, Q, \Lambda, q, s_0,$ and $s_1$ as in the statement of Theorem~\ref{thm:tau} along with the additional assumption \eqref{extraqtaucondition}, and $u = (u_\tau) \in L^\infty([0, 1]_\tau, \mathcal D'(X))$,
\begin{itemize}
	\item if $\kappa(\Lambda)$ is a sumbanifold of sinks for $W_p|_{S^*X}$, then for $s < s_0$, the existence of an open neighborhood $O'$ such that
$$(P - i Q) u \in L^\infty([0, 1]_\tau, I^{(s - m + 1), k}(O', \M(O'))$$ and
$$\Gamma_q \cap \WF^{s + k}(u) \cap O' = \emptyset$$
implies that for some open neighborhood $O \subset O'$ of $q$, $$u \in L^\infty([0, 1]_\tau, I^{(s), k}(O, \M(O)).$$
	\item if $\kappa(\Lambda)$ is a submanifold of sources for $W_p|_{S^*X}$, then for $s > s_1$, the existence of an open neighborhood $O'$ such that
$$(P - i Q) u \in L^\infty([0, 1]_\tau, I^{(s - m + 1), k}(O', \M(O'))$$
implies that for some open neighborhood $O \subset O'$ of $q$, $$u \in L^\infty([0, 1]_\tau, I^{(s), k}(O, \M(O)).$$
\end{itemize}
\end{thm}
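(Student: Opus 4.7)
The plan is to adapt the positive-commutator scheme of Theorem~\ref{thm:iterative} by incorporating the modifications of Theorem~\ref{thm:tau} to handle $Q_\tau$. I would induct on $k$, with base case $k=0$ given directly by Theorem~\ref{thm:tau}. For the inductive step, assume $u_\tau \in L^\infty([0,1], I^{(s),k-1}(O'',\M(O'')))$ for some open neighborhood $O'' \subset O'$ of $q$, and set up a positive commutator estimate using the sandwich $A^*B^2A$, where $A = (A_\gamma)_{|\gamma|=k}$ is the column vector of module generators' products as in Theorem~\ref{thm:iterative} and $B$ is the $t$-independent operator from Theorem~\ref{thm:tau}'s proof ($B_0$ from Lemma~\ref{lem:s0operators} in the sink case, the refined construction of Section~\ref{sect:tausource} in the source case).

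The central identity
\begin{equation*}
\tfrac{1}{2i}(A^*B^2A(P - iQ_\tau) - (P^* + iQ_\tau)A^*B^2A) = \tfrac{1}{2i}[A^*B^2A, P] + \tfrac{P - P^*}{2i}A^*B^2A - \tfrac{1}{2}(A^*B^2AQ_\tau + Q_\tau A^*B^2A)
\end{equation*}
splits into a $P$-part, handled verbatim as in Theorem~\ref{thm:iterative} (using \eqref{commutatorcondition} and \eqref{iterativeerrorterm} to obtain $A^*(\sgn(\lambda)(G_1^*G_1 + G_2^*G_2) + E + F)A + R^*B^2A + A^*B^2R$), and a $Q_\tau$-anticommutator. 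A direct computation gives
\begin{equation*}
\tfrac{1}{2}(A^*B^2AQ_\tau + Q_\tau A^*B^2A) = A^*BQ_\tau BA + \tfrac{1}{2}A^*[B, [B, Q_\tau]]A + \mathrm{Re}(A^*B^2[A, Q_\tau]),
\end{equation*}
and the first term $A^*BQ_\tau BA \geq 0$ combines constructively with the $-G_j^*G_j$ contributions (sink case: $\lambda < 0$, so signs match; source case handled via Section~\ref{sect:tausource}). The double commutator $A^*[B, [B, Q_\tau]]A$ is two orders below the positive term and absorbed as a standard error.

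The crux is controlling $\mathrm{Re}\lrangle{u_\tau, A^*B^2[A, Q_\tau]u_\tau}$, and this is where the hypothesis $q_\tau = \tau\nu$ is essential. Writing $Q_\tau = \tau Q + R_\tau$ with $\sigma_m(Q) = \nu$ and $R_\tau \in L^\infty([0,1], \Psi^{m-1}(X))$, one computes the principal symbol of $A^*B^2[A, Q_\tau]$ to be $\tfrac{i\tau}{2}b^2 H_\nu(\sum_\gamma \sigma_k(A_\gamma)^2)$, which is purely imaginary, so $\mathrm{Re}(A^*B^2[A, Q_\tau])$ is in fact of order $2k + 2s - 1$ rather than $2k + 2s$. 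Moreover, since each $\sigma_k(A_\gamma)$ vanishes to order $k$ on $\Lambda$, the symbol $H_\nu(\sum_\gamma \sigma_k(A_\gamma)^2)$ vanishes to order $2k-1$ on $\Lambda$, which (modulo strictly lower order) permits the expression $\mathrm{Re}(A^*B^2[A, Q_\tau]) \equiv \tau \sum_{|\delta| + |\delta'| = 2k-1} A_\delta^* B K_{\delta\delta'} B A_{\delta'}$ with $K_{\delta\delta'} \in \Psi^m(X)$, so at least one of $|\delta|, |\delta'|$ is at most $k-1$. In the pairing with $u_\tau$, each such piece is bounded via weighted Cauchy--Schwarz combined with the operator positivity $\lrangle{BAu_\tau, Q_\tau BAu_\tau} \geq 0$ and the sharp G\r{a}rding inequality applied to $Q$ (elliptic and positive near $q$): these together yield a uniform $\tau\|BA_\gamma u_\tau\|_{H^{m/2}}$-type control which, combined with the inductive hypothesis providing $A_\delta u_\tau \in H^s$ for $|\delta| \leq k-1$, absorbs the problematic contribution into the controlled positive quantities.

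With these estimates in place, the argument closes as in Theorems~\ref{thm:tau} and~\ref{thm:iterative}: the sink-case hypothesis $\Gamma_q \cap \WF^{s+k}(u) \cap O' = \emptyset$ gives, via Corollary~\ref{cor:geometriccortau}, an open neighborhood off $L$ disjoint from $\WF^s_{L^\infty([0,1])}(u_\tau)$; the source case requires no such geometric input but relies on the source-case operator refinements of Section~\ref{sect:tausource} to ensure that the residual error terms are uniformly of order $-\infty$. The hardest step throughout is the Cauchy--Schwarz closure of the $\mathrm{Re}(A^*B^2[A, Q_\tau])$ term: although the algebraic vanishing of its principal symbol and the $\tau\nu$ structure provide the essential mechanism, balancing the Sobolev orders against the $\tau$-weight and the $Q_\tau$-positivity requires careful bookkeeping.
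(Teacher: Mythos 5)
Your overall skeleton matches the paper's: induct on $k$ with Theorem~\ref{thm:tau} as the base case, run the positive commutator argument with $\frac{1}{2i}(A^*B^2A(P-iQ_\tau)-(P^*+iQ_\tau)A^*B^2A)$, keep the positive-semidefinite block $A^*BQ_\tau BA$, and close the sink/source cases as in Theorems~\ref{thm:tau} and~\ref{thm:iterative}. The divergence, and the gap, is in how you treat $[A,Q_\tau]$. The paper's essential new step is to use \eqref{extraqtaucondition} together with the positive-semidefiniteness of $Q_\tau$ to \emph{change the canonical coordinates} (writing $\nu=\xi_n^m\gamma^m$ locally, then transforming so that $\nu=\xi_n(1+f)$ with $f\in\IL$) and hence to \emph{adapt the module generators} $A_i$ themselves, so that \eqref{qtauarrangement} holds: $[A_i,Q_\tau]=\sum_j C_{ij,\tau}A_j$ with $C_{ij,\tau}$ uniformly of order $m-1$ and, crucially, $\sigma_{m-1}(C_{ij,\tau})\to 0$ as $\tau\to 0$ \eqref{qtauccondition}. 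With that structure the $Q_\tau$-commutator terms sit inside the module filtration (the $j=0$ pieces feed the $R$-type error \eqref{iterativeerrorterm} handled by the inductive hypothesis) and, for small $\tau$, the top-order pieces are absorbed into the elliptic-definite $G_2$ block exactly as in \eqref{g2absorb}. You keep the generic generators of Theorem~\ref{thm:iterative} and try to kill $\mathrm{Re}(A^*B^2[A,Q_\tau])$ by a symbol cancellation plus Cauchy--Schwarz and sharp G\r{a}rding; this substitute is not justified as written.

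Concretely: (1) your claimed representation $\mathrm{Re}(A^*B^2[A,Q_\tau])\equiv\tau\sum_{|\delta|+|\delta'|=2k-1}A_\delta^*BK_{\delta\delta'}BA_{\delta'}$ with $K_{\delta\delta'}\in\Psi^m$ has an internal order mismatch (those terms have order $2k+2s$, while you assert the operator has order $2k+2s-1$), and it is exactly the kind of statement whose proof requires $H_\nu$ to be tangent to $\Lambda$ at the symbolic level, i.e.\ the commutation structure that the paper's coordinate change provides; with unadapted generators $\{\sigma_1(A_i),\nu\}$ need not vanish on $\Lambda$, and the real part is governed by subprincipal terms you have not computed. (2) Hypothesis \eqref{extraqtaucondition} constrains only the principal symbol, so $Q_\tau-\tau\,\mathrm{Op}(\nu)$ is merely uniformly of order $m-1$ with no smallness in $\tau$; its contribution to $\mathrm{Re}(A^*B^2[A,Q_\tau])$ enters at order $2k+2s-1$ \emph{without} the $\tau$ prefactor your formula carries, so it is not covered by your absorption scheme (it can be handled, but only by commuting it through the module, i.e.\ again by the structure you have not invoked). (3) The G\r{a}rding/weighted Cauchy--Schwarz absorption leaves top-level errors of the type $\tau\|BA_\gamma u_\tau\|^2_{H^{(m-1)/2}}$ or $\tau\|BA_\gamma u_\tau\|^2_{H^{m/2}}$ with $|\gamma|=k$, for which no uniform-in-$\tau$ bound is available a priori; this is especially serious in the source case, where there is no induction from below and no a priori regularity, so errors at order $2k+2s-1$ that do not come factored through $\M^{k-1}$ cannot be bounded at all --- the paper avoids this by making the residual error uniformly of order $-\infty$ via the Section~\ref{sect:tausource} construction \emph{and} by keeping every $Q_\tau$-term inside the module/definite-block structure, not only the ``residual'' ones as in your sketch.
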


To prove this, we adapt the $A_i$ to work well with $Q_\tau$. We would like, for $0 < i < n$,
\begin{equation} \label{qtauarrangement}
	[A_i, Q] \in L^\infty([0, 1]_\tau, \Psi^{m-1}) \M
\end{equation}
for the positive commutator argument below. We do this by altering the coordinates chosen in Lemma~\ref{lem:coordinates}, and then using the above definition of $A_i$ with the new coordinates. We begin as before, choosing canonical coordinates $x, \xi$ such that locally, $\Lambda$ is $N^*\{x_n = 0\}$, with $\xi_n > 0$, and $\kappa^{-1}(q) \{x = 0, \xi_i = 0, i < n\}$. For convenience, we let $x_n = z, y = (y_1, \ldots ,y_n) = (x_1 \ldots x_{n-1})$, and $\xi' = (\xi_1 \ldots \xi_{n-1})$. By \eqref{extraqtaucondition} and the positive-semidefiniteness of $Q_\tau$, we have 
$$\nu = \xi_n^m \gamma(z, y, \frac{\xi'}{\xi_n})^m$$
 locally, with $\gamma > 0$.
Note that if we set
\begin{align*}
y & = \tilde{y} \\
z & = \tilde{z} \gamma(0, y, 0) \\
\xi_n & = \frac{\tilde{\xi}_n}{\gamma(0, y, 0)} \\
\xi_i & = \tilde{\xi}_i - \tilde{\xi}_n z \frac{\partial_{y_i} \gamma(0, y, 0)}{\gamma^2}
\end{align*}
then $\tilde{z}, \tilde y, \tilde \xi$ are canonical coordinates, and locally, $\Lambda = N^*\{\tilde z = 0\}$ and $\kappa^{-1}(q) = \{\tilde z = 0, \tilde y = 0, \tilde \xi = 0, i < n\}$. Further,
$$\nu = \xi_n (1 + f)$$
with $f \in \IL$, where $\IL$ is as defined immediately before the statement of Lemma~\ref{lem:coordinates}, and $U$ is an open set on wich these coordinates are defined. We can thus proceed with further choices of coordinates as in Lemma~\ref{lem:coordinates}, and we get \eqref{qtauarrangement}. Further, if we write, for $0 < i < n$,
$$[A_i, Q_\tau]  = \sum^n_{j = 0} C_{ij, \tau} A_j,$$
we have that
\begin{equation} \label{qtauccondition}
	\sigma_{m-1}(C_{ij, \tau}) \stackrel{\tau \rightarrow 0}{\rightarrow} 0 \ \mathrm{in} \ S^m(X)/S^{m-1}(X).
\end{equation}

The proof then follows as a modification of the above. We use a positive commutator estimate using ``commutator''
\begin{equation} \label{taucommutator}
\frac{1}{2i}(A^* B^2 A (P - i Q_\tau) - (P^* + iQ_\tau) A^* B^2 A).
\end{equation}
\eqref{qtauccondition} allows, for sufficiently small $\tau$, all terms involving $Q_\tau$, other than the positive-semidefinite $A^*B Q_\tau B A$, to be absorbed into the $G_2$ matrix of operators as in \eqref{g2absorb}, and so we have that \eqref{taucommutator} is equal to
\begin{align*}
 - A^*(G_1^* G_1 + G_2^* G_2)A  - A^* B Q_\tau B A + A^* E A  + A^*F A + R^* B^2 A + A^* B^2 R
\end{align*}
where $F$ is uniformly of order $-\infty$. As in the proof of Theorem~\ref{thm:tau}, $Q_\tau$ regularizes for us, so $B$ need have no regularization. $\WF'(E)$ is, in the sink case, where we assume regularity, and in the source case, off the characteristic set. The proof proceeds by induction as above.

\bibliographystyle{plain}

\bibliography{lagrangian_radial_version2}

\begin{thebibliography}{10}

\bibitem{bony}
Jean-Fran\c{c}ois Bony, Setsuro Fujii\'{e}, Ramond Thierry, and Maher Zerzeri.
\newblock Microlocal kernels of pseudodifferential operators at a hyperbolic
  fixed point.
\newblock {\em J. Funct. Anal.}, 252(1):68--125, 2007.

\bibitem{datchevvasygluing}
Kiril Datchev and Andr\'{a}s Vasy.
\newblock Gluing semiclassical resolvent estimates via propagation of
  singularities.
\newblock arXiv:1008.3964 [math.AP].

\bibitem{duistermaathormander}
J.~J. Duistermaat and L.~H\"{o}rmander.
\newblock Fourier integral operators, {II}.
\newblock {\em Acta Math.}, 128(3-4):183--269, 1972.

\bibitem{guilleminschaeffer}
Victor Guillemin and David Schaeffer.
\newblock On a certain class of fuchsian partial differential equations.
\newblock {\em Duke Mathematical Journal}, 44(1):157--199, 1977.

\bibitem{hmv1}
Andrew Hassell, Richard Melrose, and Andr\'{a}s Vasy.
\newblock Spectral and scattering theory for symbolic potentials of order zero.
\newblock {\em Advances in Mathematics}, 181(1):1 -- 87, 2004.

\bibitem{hmv2}
Andrew Hassell, Richard Melrose, and Andr\'{a}s Vasy.
\newblock Microlocal propagation near radial points and scattering for symbolic
  potentials of order zero.
\newblock {\em Analysis and PDE}, 1:127--196, 2008.

\bibitem{herbstskibstedscattering}
Ira Herbst and Erik Skibsted.
\newblock Quantum scattering for potentials independent of $|x|$: asymptotic
  completeness for high and low energies.
\newblock {\em Communications in Partial Differential Equations}, 29(3-4):547
  -- 610, 2004.

\bibitem{herbstskibstedabsence}
Ira Herbst and Erik Skibsted.
\newblock Absence of quantum states corresponding to unstable classical
  channels.
\newblock {\em Annales Henri Poincar\'{e}}, 9(3):509--552, 2008.

\bibitem{hormanderpositivecommutator}
Lars H\"{o}rmander.
\newblock On the existence and regularity of solutions of linear
  pseudo-differential equations.
\newblock {\em Enseignement Math. (2)}, 17:99--163, 1971.

\bibitem{hormander3}
Lars H\"{o}rmander.
\newblock {\em The analysis of linear partial differential operators III:
  Pseudo-differential operators}.
\newblock Classics in Mathematics. Springer-Verlag, Berlin, reprint of the 1994
  edition, 2007.

\bibitem{melroseasymp}
Richard Melrose.
\newblock Spectral and scattering theory for the laplacian on asymptotically
  euclidian spaces.
\newblock {\em Spectral and scattering theory}, pages 85--130, 1994.

\bibitem{taylor}
Michael~E. Taylor.
\newblock {\em Partial differential equations I. Basic theory}.
\newblock Applied Mathematical Sciences, 115. Springer-Verlag, New York, second
  edition, 1996.

\bibitem{vasynbody}
Andr\'{a}s Vasy.
\newblock Asymptotic behavior of generalized eigenfunctions in $n$-body
  scattering.
\newblock {\em J. Funct. Anal.}, 173(1):170--184, 1997.

\bibitem{kerr-desitter}
Andr\'{a}s~Vasy{,} with an appendix~by Semyon~Dyatlov.
\newblock Microlocal analysis of asymptotically hyperbolic and {K}err-de
  {S}itter spaces.
\newblock arXiv:1012.4391v1 [math.AP].

\end{thebibliography}

\end{document}